\theoremstyle{plain}
\newtheorem{theorem}{Theorem}
\newtheorem{prop}[theorem]{Proposition}
\newtheorem{lemma}[theorem]{Lemma}
\newtheorem{coro}[theorem]{Corollary}
\newtheorem{fact}[theorem]{Fact}
\theoremstyle{definition}
\newtheorem{definition}{Definition}
\newtheorem{remark}{Remark}
\newcommand{\ts}{\hspace{0.5pt}}
\newcommand{\nts}{\hspace{-0.5pt}}
\newcommand{\AAA}{\mathbb{A}}
\newcommand{\RR}{\mathbb{R}\ts}
\newcommand{\QQ}{\mathbb{Q}\ts}
\newcommand{\ZZ}{{\ts \mathbb{Z}}}
\newcommand{\PP}{{\ts \mathbb{P}}}
\newcommand{\KK}{\mathbb{K}}
\newcommand{\SSS}{\mathbb{S}}
\newcommand{\TT}{\mathbb{T}}
\newcommand{\MM}{\mathbb{M}}
\newcommand{\NN}{\mathbb{N}}
\newcommand{\XX}{\mathbb{X}}
\newcommand{\YY}{\mathbb{Y}}
\newcommand{\cA}{\mathcal{A}}
\newcommand{\cB}{\mathcal{B}}
\newcommand{\cM}{\mathcal{M}}
\newcommand{\cL}{\mathcal{L}}
\newcommand{\cP}{\mathcal{P}}
\newcommand{\vG}{\varGamma}
\newcommand{\vL}{\varLambda}
\newcommand{\ii}{\mathrm{i}}
\newcommand{\ee}{\mathrm{e}}
\newcommand{\dd}{\, \mathrm{d}}
\newcommand{\oplam}{\mbox{\Large $\curlywedge$}}
\newcommand{\smoplam}{\mbox{$\curlywedge$}}
\newcommand{\exend}{\hfill $\Diamond$}
\DeclareMathOperator{\dens}{dens}
\DeclareMathOperator{\den}{den}
\DeclareMathOperator{\vol}{vol}
\DeclareMathOperator{\card}{card}
\DeclareMathOperator{\supp}{supp}
\DeclareMathOperator{\bigplus}{\text{\Large $+$}}
\begin{document}

\title[On weak model sets]
{On weak model sets of extremal density}

\author{Michael Baake}

\author{Christian Huck}
\address{Fakult\"at f\"ur Mathematik, Universit\"at Bielefeld, \newline
\hspace*{\parindent}Postfach 100131, 33501 Bielefeld, Germany}
\email{$\{$mbaake,huck$\}$@math.uni-bielefeld.de}


\author{Nicolae Strungaru}
\address{Department of Mathematical Sciences, MacEwan University, \newline
\hspace*{\parindent}10700 \ts 104 Avenue, Edmonton, AB, Canada T5J 4S2, and \newline
\hspace*{\parindent}Department of Mathematics, Trent University, \newline
\hspace*{\parindent}1600 West Bank Drive, Peterborough, ON, Canada K9L 0G2 }
\email{strungarun@macewan.ca, nicolaestrungaru@trentu.ca}

\begin{abstract}
  The theory of regular model sets is highly developed, but does not
  cover examples such as the visible lattice points, the $k$th
  power-free integers, or related systems.  They belong to the class
  of weak model sets, where the window may have a boundary of positive
  measure, or even consists of boundary only. The latter phenomena are
  related to the topological entropy of the corresponding dynamical
  system and to various other unusual properties.  Under a rather
  natural extremality assumption on the density of the weak model set,
  we establish its pure point diffraction nature. We derive an
  explicit formula that can be seen as the generalisation of the case
  of regular model sets.  Furthermore, the corresponding natural patch
  frequency measure is shown to be ergodic. Since weak model sets of
  extremal density are generic for this measure, one obtains that the
  dynamical spectrum of the hull is pure point as well.
\end{abstract}

\maketitle

\section{Introduction}

The theory of regular model sets, which are also known as cut and
project sets with sufficiently nice windows, is well established; see
\cite{TAO} and references therein for general background.  One
cornerstone of this class is the pure pointedness of the diffraction
measure \cite{Hof,Martin,BM}. Equivalently, this means that the
dynamical spectrum of the uniquely ergodic hull defined by the model
set is pure point as well; compare \cite{LMS,BL,LS}.  The regularity
of the window is vital to the existing proofs such as that in
\cite{Martin}, and also enters the characterisation of regular model
sets via dynamical systems \cite{BLM}.

For quite some time, systems such as the visible lattice points or the
$k$th power-free integers have been known to be pure point diffractive
as well \cite{BMP}. These point sets can also be described as model
sets, but here the windows are no longer regular. In fact, for each of
these examples, the window consists of boundary only, which has
positive measure, and many other properties of regular model sets are
lost, too. In particular, there are many invariant probability
measures on the orbit closure (or hull) of the point set under the
translation action of the lattice.  Yet, as explicit recent progress
has shown, the natural cluster (or patch) frequency measure of this
hull is ergodic and the visible points are generic for this measure
\cite{BH}. Consequently, the dynamical spectrum is still pure point,
by an application of the general equivalence theorem \cite{BL}. Since
this example is one out of a large class with similar properties, it
is natural to ask for a general approach that includes all of
them. Such a class is provided by \emph{weak model sets}, where one
allows more general windows.  This name was coined by Moody
\cite{M-beyond,Moody}, see also \cite[Rem.~7.4]{TAO}, and apparently
was first looked at by Schreiber \cite{Schreiber}.

It is the purpose of this paper to derive some key results for weak
model sets.  To this end, we begin with the visible lattice points as
a motivating example. Then, we start from the general setting of model
sets for a general cut and project scheme $(G,H,\cL)$, see
Eq.~\eqref{eq:candp} below for a definition, but investigate the
diffraction properties for the larger class of windows indicated
above.  It turns out that this is indeed possible under one fairly
natural assumption, namely that of maximal or minimal density for a
given van Hove averaging sequence in the group $G$.  This assumption
guarantees pure point diffractivity (Theorems~\ref{thm:weak-pp} and
\ref{thm:min-pp}). 

In a second step, we analyse the ergodicity of the cluster frequency
measure for the very van Hove sequence, which then results in the
dynamical properties of the hull we are after.  In particular, we
establish that weak model sets of extremal density have pure point
dynamical spectrum, and calculate the latter. Finally, we apply our
results to coprime lattice families, which encompasses the $k$-visible
lattice points in $d$-space as well as other examples of arithmetic
origin, such as $k$-free or (coprime) $\mathcal{B}$-free integers
\cite{PH,BH,ELD,BKKL,KLW} and their generalisations to analogous
systems in number fields \cite{CV,BH,BK}.  This way, we demonstrate
that and how the theory of weak model sets provides a natural
framework for a unified treatment of such systems.

In parallel to our approach, Keller and Richard \cite{KR} have
developed an alternative view on model sets via a systematic
exploitation of the torus parametrisation for such systems; compare
\cite{BHP,HRB,Martin,BLM}. Their work includes weak model sets and
provides an independent way to derive several of our key results. In
this sense, the two approaches are complementary and, in conjunction,
give a more complete picture of a larger class of model sets than
understood previously, both concretely and structurally.

\section{Preliminaries and background}

Our general reference for background and notation is the recent
monograph \cite{TAO}. Here, we basically summarise some key concepts
and their extensions in the generality we need them.  Let $G$ be a
locally compact Abelian group (LCAG), and denote the space of
translation bounded (and generally complex) measures on $G$ by
$\cM^{\infty} (G)$. Here and below, measures are viewed as linear
functionals on the space $C_{\mathsf{c}} (G)$ of continuous functions
with compact support, which is justified by the general Riesz--Markov
theorem; see \cite{BF} for general background. In this setting, we use
$\mu (g)$ and $\int_{G} g \dd \mu$ for an integrable function $g$ as
well as $\mu (A) = \int_{G} 1^{}_{A} \dd \mu$ for a Borel set $A$
exchangeably.  For a measure $\mu$, we define its twisted version
$\widetilde{\mu}$ by $\widetilde{\mu} (g) = \overline{\mu
  (\widetilde{g}\ts )}$ for $g\in C_{\mathsf{c}} (G)$ as usual, where
$\widetilde{g} (x) := \overline{g (-x)}$.

If $\mu$ is a \emph{finite} measure on $G$, we define its norm as $ \|
\mu \| = \lvert \mu \rvert (G)$, where $\lvert \mu \rvert$ denotes the
total variation of $\mu$.  More generally, for $\nu \in \cM^{\infty}
(G)$ and any compact set $K\subseteq G$, we define
\[
     \| \nu \|^{}_{K} \, = \, \sup_{t\in G}\, \lvert \nu \rvert (t+K) \ts .
\]
It is clear that $\nu \in \cM^{\infty} (G)$ means $ \| \nu \|^{}_{K} <
\infty$ for any compact $K\subseteq G$.

\begin{fact}\label{fact:bounds}
  Let\/ $\mu$ be a finite measure on\/ $G$, let\/ $\nu \in
  \cM^{\infty} (G)$ and\/ $g\in C_{\mathsf{c}} (G)$.  If\/ $\supp (g)
  \subseteq K$, with\/ $K\subseteq G$ compact, one has the estimates
\[
    \| \mu * \nu * g \|^{}_{\infty} \, \leqslant \,
    \| \mu \| \, \| \nu \|^{}_{K} \, \| g \|^{}_{\infty} 
    \quad \text{and} \quad
    \lvert (\mu * \nu) (g) \rvert  \, \leqslant \,
    \| \mu \| \, \| \nu \|^{}_{K} \, \| g \|^{}_{\infty} \ts .
\]
\end{fact}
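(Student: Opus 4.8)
The plan is to derive both estimates directly from the defining integral representations of the convolutions, letting Fubini's theorem and the localisation coming from $\supp(g)\subseteq K$ do all the work. Since $\mu$ is finite and $\nu\in\cM^{\infty}(G)$, the measure $\mu*\nu$ exists (as a translation bounded measure) and is given on test functions by $(\mu*\nu)(g)=\int_{G}\int_{G} g(s+t)\dd\mu(s)\dd\nu(t)$, while the function $\mu*\nu*g$ is represented pointwise by $(\mu*\nu*g)(x)=\int_{G}\int_{G} g(x-s-t)\dd\mu(s)\dd\nu(t)$. Both representations presuppose absolute integrability of the double integral, which is exactly what the bounds below supply, so I would arrange the argument so that the estimate on the modulus is secured first, legitimising Fubini a posteriori.

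For the second inequality I would start from $\lvert(\mu*\nu)(g)\rvert\leqslant\int_{G}\int_{G}\lvert g(s+t)\rvert\dd\lvert\mu\rvert(s)\dd\lvert\nu\rvert(t)$. For each fixed $s$, the inner integrand is supported on $\{t : s+t\in K\}=-s+K$, a single translate of $K$; hence $\int_{G}\lvert g(s+t)\rvert\dd\lvert\nu\rvert(t)\leqslant\|g\|^{}_{\infty}\,\lvert\nu\rvert(-s+K)\leqslant\|g\|^{}_{\infty}\,\|\nu\|^{}_{K}$. Integrating this constant bound against $\lvert\mu\rvert$ and using $\lvert\mu\rvert(G)=\|\mu\|$ then produces the factor $\|\mu\|\,\|\nu\|^{}_{K}\,\|g\|^{}_{\infty}$.

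The first inequality follows by the same two-step mechanism, now applied pointwise in $x$: bounding the inner $t$-integral of $\lvert g(x-s-t)\rvert$ by localising its support to a translate of $K$ gives $\|g\|^{}_{\infty}\,\|\nu\|^{}_{K}$ uniformly in $s$ and $x$, the outer integral against $\lvert\mu\rvert$ contributes $\|\mu\|$, and taking the supremum over $x$ yields $\|\mu*\nu*g\|^{}_{\infty}$. It is convenient to take $K$ symmetric, $K=-K$, which one may do since any compact set containing $\supp(g)$ sits inside such a set; this makes the support localisations in the two computations read off one and the same $\|\nu\|^{}_{K}$ rather than $\|\nu\|^{}_{-K}$.

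The only genuinely delicate point is the interchange of the order of integration, i.e.\ the applicability of Fubini. I would either justify it at the outset by noting that $\lvert\mu\rvert*\lvert\nu\rvert$ is a well-defined translation bounded measure, so that the relevant double integral of $\lvert g\rvert$ is finite, or else sidestep it through the hierarchical route $\mu*\nu*g=\mu*(\nu*g)$: first show $\|\nu*g\|^{}_{\infty}\leqslant\|\nu\|^{}_{K}\,\|g\|^{}_{\infty}$ from $(\nu*g)(x)=\int_{G} g(x-y)\dd\nu(y)$ and the support bound, and then convolve the resulting bounded continuous function with the finite measure $\mu$, for which $\|\mu*f\|^{}_{\infty}\leqslant\|\mu\|\,\|f\|^{}_{\infty}$ is immediate. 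Everything else reduces to the triangle inequality for integrals and bookkeeping of the total variation.
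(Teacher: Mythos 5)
Your proposal is correct and, in its ``hierarchical'' variant $\mu * \nu * g = \mu * (\nu * g)$, is exactly the paper's own proof: one first bounds $\| \nu * g \|^{}_{\infty} \leqslant \| \nu \|^{}_{K}\, \| g \|^{}_{\infty}$ by localising the support of $g$ to a translate of $K$, then integrates against the finite measure $\mu$ to pick up the factor $\| \mu \|$, the paper obtaining the second estimate by the reduction $(\mu * \nu)(g) = (\mu * \nu * g^{}_{\text{-}})(0)$ rather than by a separate Fubini computation. Your explicit symmetrisation $K = -K$ is a sensible touch that takes care of the $\| \nu \|^{}_{K}$ versus $\| \nu \|^{}_{-K}$ distinction, which the paper passes over silently.
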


\begin{proof}
  Since $\nu*g$ defines a continuous function, $\| \nu * g
  \|^{}_{\infty} \leqslant \| \nu \|^{}_{K} \, \| g \|^{}_{\infty}$
  follows from standard arguments. Then, one finds
\[
\begin{split}
   \big\lvert \bigl( \mu * \nu * g \bigr) (x) \big\rvert
   \, & = \, \left| \int_{G} \bigl( \nu * g \bigr) (x-y) 
   \dd \mu (y) \right| \, \leqslant  \int_{G}
   \big\lvert \bigl( \nu * g \bigr) (x-y) \big\rvert
   \dd \lvert \mu \rvert (y) \\[1mm]
   & \leqslant  \int_{G} \| \nu * g \|^{}_{\infty} 
   \dd \lvert \mu \rvert (y) \, = \,
   \| \nu * g \|^{}_{\infty} \, \| \mu \|
   \, \leqslant \, \| \mu \| \, \| \nu \|^{}_{K} \,
   \| g \|^{}_{\infty} \ts ,
\end{split}
\]
which proves the first claim.

Next, observe that $(\mu*\nu)(g) = \int_{G\times G}\, g (x+y) \dd \mu
(x) \dd \nu (y) = (\mu * \nu * g^{}_{\text{-}}) (0)$, where
$g^{}_{\text{-}}$ is defined by $g^{}_{\text{-}} (x) := g(-x)$, so
that $ \lvert (\mu * \nu) (g) \rvert \leqslant \| \mu * \nu *
g^{}_{\text{-}}\|^{}_{\infty}$, and the second claim follows from the
first because $\| g^{}_{\text{-}}\|^{}_{\infty} = \|g\|^{}_{\infty}$.
\end{proof}

Let $H$ be a compactly generated LCAG, hence (up to isomorphism) of
the form $\RR^{d} \times \ZZ^{n} \times \KK$ for some integers
$d,n\geqslant 0$ and some compact Abelian group $\KK$; compare
\cite[Thm.~9.8]{HR-Book}. We assume $H$ to be equipped with its Haar
measure $\theta = \theta^{}_{\! H}$, where we follow the standard
convention that this is Lebesgue measure on $\RR^{d}$, counting
measure on $\ZZ^{n}$ and normalised on compact groups, so
$\theta^{}_{\KK} (\KK) = 1$. The Haar measure on $G$ is denoted by
$\theta^{}_{\nts G}$, where we will use $\dd t$ instead of $\dd
\theta^{}_{\nts G} (t)$ for integration over (subsets of) $G$. Also,
we will write $\vol (A)$ instead of $\theta^{}_{\nts G} (A)$ for
measurable sets $A \subset G$.

The \emph{covariogram function} $c^{}_{W}$ of a relatively compact
Borel set $W\subseteq H$ is the real-valued function $c^{}_{W}$
defined by
\[
     c^{}_{W} (x) \, = \, \bigl( 1^{}_{W} * \widetilde{1^{}_{W}}
             \bigr) (x) \ts ,
\] 
where convolution is defined via $ \theta^{}_{\! H}$ as usual.
Note that the value at $0$ is given by
\begin{equation}\label{eq:cov-at-0}
     c^{}_{W} (0) \, = \int_{\nts\nts H}\, \lvert 1^{}_{W} (x) \rvert^{2}
     \dd \theta^{}_{\! H} (x) \, = \, \theta^{}_{\! H} (W) \ts .
\end{equation}

\begin{fact}\label{fact:cont}
  Let\/ $W$ be a relatively compact Borel set in a compactly generated
  LCAG\/ $H$. Then, the corresponding covariogram function\/
  $c^{}_{W}$ is bounded and uniformly continuous on\/ $H$.
\end{fact}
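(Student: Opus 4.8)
The plan is to read off boundedness directly from an explicit rewriting of $c^{}_{W}$ and then to reduce uniform continuity to the strong continuity of the translation action on $L^{2} (H)$. Since $1^{}_{W}$ is real-valued, one has $\widetilde{1^{}_{W}} = 1^{}_{-W}$, so unfolding the convolution gives
\[
   c^{}_{W} (x) \, = \int_{\nts\nts H} 1^{}_{W} (y) \ts 1^{}_{W} (y - x) \dd \theta^{}_{\! H} (y)
   \, = \, \theta^{}_{\! H} \bigl( W \cap (W + x) \bigr) \ts .
\]
This is non-negative, and since $W$ is relatively compact one has $\theta^{}_{\! H} (W) = c^{}_{W} (0) < \infty$ by Eq.~\eqref{eq:cov-at-0}, whence $0 \leqslant c^{}_{W} (x) \leqslant c^{}_{W} (0)$ for all $x \in H$. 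This settles boundedness.

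For continuity, I would write $f = 1^{}_{W}$ and, for $h \in H$, let $f^{}_{h}$ denote the translate $f^{}_{h} (y) = f(y - h)$. Using the formula above together with the Cauchy--Schwarz inequality and the translation invariance of $\theta^{}_{\! H}$, one estimates, for $x, x' \in H$,
\[
   \lvert c^{}_{W} (x) - c^{}_{W} (x') \rvert
   \, = \, \Big\lvert \int_{\nts\nts H} f(y) \bigl( f^{}_{x} (y) - f^{}_{x'} (y) \bigr) \dd \theta^{}_{\! H} (y) \Big\rvert
   \, \leqslant \, \| f \|^{}_{2} \, \| f^{}_{x} - f^{}_{x'} \|^{}_{2}
   \, = \, \| f \|^{}_{2} \, \| f^{}_{x - x'} - f \|^{}_{2} \ts ,
\]
where $\| \cdot \|^{}_{2}$ is the norm of $L^{2} (H, \theta^{}_{\! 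H})$, and $f \in L^{2} (H)$ because $\theta^{}_{\! H} (W) < \infty$. Crucially, the right-hand side depends on $x$ and $x'$ only through the difference $h = x - x'$, so it suffices to show $\| f^{}_{h} - f \|^{}_{2} \to 0$ as $h \to 0$; this then yields uniform continuity of $c^{}_{W}$ at once.

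The remaining and main point is thus the strong continuity of translations on $L^{2} (H)$. Here I would invoke the standard approximation scheme: the space $C_{\mathsf{c}} (H)$ is dense in $L^{2} (H)$, and any $g \in C_{\mathsf{c}} (H)$ is uniformly continuous for the group uniformity, so $\| g^{}_{h} - g \|^{}_{\infty} \to 0$ as $h \to 0$ while the supports of $g^{}_{h} - g$ stay within a fixed compact set for $h$ in a compact neighbourhood of $0$; together these give $\| g^{}_{h} - g \|^{}_{2} \to 0$. For general $f \in L^{2} (H)$, one picks $g \in C_{\mathsf{c}} (H)$ with $\| f - g \|^{}_{2} < \varepsilon$ and combines the triangle inequality with $\| f^{}_{h} - g^{}_{h} \|^{}_{2} = \| f - g \|^{}_{2}$ to get $\| f^{}_{h} - f \|^{}_{2} \leqslant 2 \ts \varepsilon + \| g^{}_{h} - g \|^{}_{2}$, which is $< 3 \ts \varepsilon$ for $h$ close enough to $0$. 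I expect the uniform continuity of compactly supported continuous functions on a general LCAG to be the only genuinely delicate ingredient; it rests on such functions being uniformly continuous for the (two-sided, here Abelian) uniform structure of $H$, which one may alternatively read off from the structure $H \cong \RR^{d} \times \ZZ^{n} \times \KK$ recorded above.
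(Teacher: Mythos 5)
Your argument is correct. The paper disposes of this fact in one line: it observes that $1^{}_{W}$ and $\widetilde{1^{}_{W}}$ both lie in $L^{1}(H)\cap L^{\infty}(H)$ and cites the standard theorem \cite[Thm.~1.1.6]{Rudin} that the convolution of an $L^{1}$ function with an $L^{\infty}$ function is bounded and uniformly continuous; the classical proof of that cited theorem estimates $\lvert (f*g)(x)-(f*g)(y)\rvert \leqslant \| f^{}_{x}-f^{}_{y}\|^{}_{1}\, \| g\|^{}_{\infty}$ and invokes strong continuity of translations on $L^{1}$. You instead exploit the symmetric structure of the covariogram, viewing $c^{}_{W}(x)$ as the $L^{2}(H)$ inner product of $1^{}_{W}$ with its translate (equivalently, using the identity $c^{}_{W}(x)=\theta^{}_{\! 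H}\bigl(W\cap (W+x)\bigr)$) and applying Cauchy--Schwarz, which reduces everything to strong continuity of translations on $L^{2}(H)$. The two routes are parallel: each hinges on the same core ingredient, namely norm-continuity of translation in an $L^{p}$ space, obtained from density of $C_{\mathsf{c}}(H)$ together with uniform continuity of compactly supported continuous functions on an LCAG. What your version buys is a self-contained proof adapted to the special shape of $c^{}_{W}$, including the sharp pointwise bound $0\leqslant c^{}_{W}\leqslant c^{}_{W}(0)=\theta^{}_{\! H}(W)$, at the cost of reproving a textbook lemma; the paper's citation is shorter but less explicit. All the ingredients you use --- density of $C_{\mathsf{c}}(H)$ in $L^{2}(H)$ for the Haar (Radon) measure, translation invariance, and uniform continuity of elements of $C_{\mathsf{c}}(H)$ --- are standard and correctly deployed, so there is no gap.
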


\begin{proof}
  Both $1^{}_{W}$ and $\widetilde{1^{}_{W}}$ are elements of $L^{1}
  (H) \cap L^{\infty} (H)$, whence $1^{}_{W} * \widetilde{1^{}_{W}}$
  is well-defined.  The convolution of an $L^{1}$ function with an
  $L^{\infty}$ function is uniformly continuous and bounded by
  standard arguments \cite[Thm.~1.1.6]{Rudin}.
\end{proof}

Next, we need a \emph{cut and project scheme} (CPS) as introduced in
\cite{Meyer}, coded by a triple $(G,H,\cL)$; see also
\cite{M-Nato,M-beyond,TAO} for background. Here, we use a LCAG $G$ as
\emph{direct space}, another LCAG $H$ as \emph{internal space}, and a
lattice $\cL \subset G\times H$ subject to some further restrictions
as follows,
\begin{equation}\label{eq:candp}
\renewcommand{\arraystretch}{1.2}\begin{array}{r@{}ccccc@{}l}
   & G & \xleftarrow{\,\;\;\pi\;\;\,} & G \times H & 
        \xrightarrow{\;\pi^{}_{\mathrm{int}\;}\,} & H & \\
   & \cup & & \cup & & \cup & \hspace*{-1ex} 
   \raisebox{1pt}{\text{\footnotesize dense}} \\
   & \pi(\cL) & \xleftarrow{\, 1-1 \,} & \cL & 
   \xrightarrow{\; \hphantom{1-1} \;} & \pi^{}_{\mathrm{int}}(\cL) & \\
   & \| & & & & \| & \\
   & L & \multicolumn{3}{c}{\nts\xrightarrow{\qquad\qquad\;\star
       \;\qquad\qquad}} 
       &  {L_{}}^{\star\nts} & \\
\end{array}\renewcommand{\arraystretch}{1}
\end{equation}
Here, $\pi$ and $\pi^{}_{\mathrm{int}}$ denote the natural
projections.  Since the lattice is located within $G\times H$ such
that its projection into $G$ is $1-1$, one inherits a well-defined
$\star$-map from $L$ into $H$, which will become important later on.
Note that, when $G$ is torsion-free, the $\star$-map has a unique
extension to $\QQ L$, which is a particularly useful property for
$G=\RR^{d}$. In our exposition below, we will further assume that $G$
is $\sigma$-compact and $H$ is compactly generated.

A \emph{projection set} (or cut and project set) in the strict sense
is any set of the form
\[
     \oplam (W) \, = \, \{ x \in L \mid x^{\star} \in W \}
\]
with $W\subseteq H$. Such a set is called a \emph{model set}, if $W
\subset H$ is relatively compact with non-empty interior. When
$\varnothing \ne W = \overline{W^{\circ}}$ is compact, the window is
called \emph{proper}. When, in addition, $\theta^{}_{\! H} (\partial
W) =0$, the model set is called \emph{regular}. In this situation, a
highly developed theory is at hand; see \cite{TAO} and references
therein for background. The known results easily generalise to
relatively compact windows, when $\theta^{}_{\! H} (W^{\circ}) =
\theta^{}_{\! H} (\, \overline{\! W \!} \,)$, which is often needed
for practical examples. Here, we are interested in the significantly
more general situation where one only demands $W \subseteq H$ to be a
relatively compact set with $\theta^{}_{\! H} (\, \overline{\!  W
  \!}\,) > 0$, without further conditions. The corresponding cut and
project set $\oplam (W)$ is then called a \emph{weak model set}, and
one generally has the chain of inclusions
\[
    \{ \text{regular model sets} \} \
    \, \subsetneq \, \{ \text{model sets}\}
   \, \subsetneq\, \{ \text{weak model sets}\} 
   \, \subsetneq \, \{ \text{projection sets} \}\ts .
\]
Note that weak model sets can have rather curious properties. In
particular, they need neither be Meyer sets nor even Delone sets. A
classic example, which we will discuss below again in some detail, is
provided by the visible points of a lattice in Euclidean space. Let us
also stress that our condition $\theta^{}_{\! H} (\, \overline{\! W
  \!}\,) > 0$ essentially excludes point sets with vanishing upper
density. We will not consider more general situations in this paper.

\begin{remark}
  If $W \subset H$ is relatively compact, there is a compact
  neighbourhood $K$ of $0\in H$ such that $W\subseteq K \subseteq H$.
  If we had started with a general LCAG $H$ in our CPS, we could now
  reduce the CPS to one with the group $H_{0}$ generated by $K$
  instead of $H$.  In this sense, our assumption that $H$ be compactly
  generated is no restriction.  \exend
\end{remark}

For our extensions below, we also need the concept of a
\emph{weighted} model set. By this we mean a marked set of the form
$\{ (x, h_{x}) \mid x \in \oplam (W) \}$ where the $h_{x}$ are real or
complex numbers, usually assumed bounded. Of particular relevance is
the case that the weights satisfy $h_{x} = c (x^{\star})$ with a
continuous, real- or complex-valued function $c$ on $H$. Particularly
nice properties emerge when $c$ is compactly supported
\cite{LR}. Moreover, if $c$ is also positive definite (or a linear
combination of functions of that class), one obtains a powerful
extension of the Poisson summation formula to weighted Dirac combs
\cite{TAO,RS}.

To formulate it, we need a dual to the CPS of Eq.~\eqref{eq:candp}.
First, given an LCAG $G$, its \emph{dual}, $\widehat{G}$,
 is the set of continuous
characters $\chi\! : \, G \xrightarrow{\quad} \SSS^{1}$, which is an
LCAG again, with multiplication of characters as group operation.  For
our purposes, it is advantageous to write the group additively, by
identifying a character $\chi (.) = \chi^{}_{u} (.)$ with a pairing
$\langle u , .\rangle$, so that $\chi^{}_{u} \chi^{}_{v} =
\chi^{}_{u+v}$ in analogy to $\chi^{}_{u} (x) = \ee^{2 \pi \ii ux}$ in
the important case $G=\RR^{d}$, where $ux$ is the standard inner
product in $\RR^{d}$. Now, using this additive notation, and observing
the natural isomorphism $\widehat{G\times H} \simeq \widehat{G}\times
\widehat{H}$, the \emph{dual} CPS \cite{M-Nato,M-beyond} is given by
\begin{equation}\label{eq:dual-candp}
\renewcommand{\arraystretch}{1.2}\begin{array}{r@{}ccccc@{}l}
   & \widehat{G} & \xleftarrow{\,\;\;\pi\;\;\,} & 
   \widehat{G} \times \widehat{H} & 
        \xrightarrow{\;\pi^{}_{\mathrm{int}\;}\,} & \widehat{H} & \\
   & \cup & & \cup & & \cup & \hspace*{-1ex} 
   \raisebox{1pt}{\text{\footnotesize dense}} \\
   & \pi(\cL^{\ts 0}) & \xleftarrow{\, 1-1 \,} & \cL^{\ts 0} & 
   \xrightarrow{\; \hphantom{1-1} \;} & 
   \pi^{}_{\mathrm{int}}(\cL^{\ts 0}) & \\
   & \| & & & & \| & \\
   & L^{0} & \multicolumn{3}{c}{\nts\xrightarrow{\qquad\qquad\;\star
       \;\qquad\qquad}} 
       &  {L_{}^{0}}^{\star\nts} & \\
\end{array}\renewcommand{\arraystretch}{1}
\end{equation}
without further restrictions on $\widehat{G}$ and $\widehat{H}$.
Here, to define $\cL^{\ts 0}$, we make use of the fact that $\cL$ from
the original CPS \eqref{eq:candp} has the form $\cL = \{ (x,x^{\star})
\mid x \in L \}$, which permits us to define
\[
    \cL^{\ts 0} \, := \{ (u,v) \in \widehat{G} \times \widehat{H} \mid
      \langle u, x \rangle \langle v, x^{\star}\rangle = 1
      \text{ for all } x \in L \} \ts ,
\]
which is a lattice for the new CPS; compare \cite[Sec.~5]{M-Nato} and
references therein for more. 

The important properties indicated in Eq.~\eqref{eq:dual-candp} are
inherited from the original CPS \cite{M-Nato}, so we have once again a
well-defined $\star$-map, for which we use the same symbol. In
particular, $\cL^{\ts 0}$ can also be written as $\cL^{\ts 0} = \{ (u,
u^{\star}) \mid u \in L^{0} \}$.  Under the isomorphism
$\widehat{G\times H} \simeq \widehat{G}\times \widehat{H}$, one sees
that $\cL^{\ts 0}$ becomes the annihilator of $\cL \subset G\times H$
in the dual group $\widehat{G\times H}$, and also that one has a
natural isomorphism
\[
    \widehat{\cL^{\ts 0}} \, \simeq \, 
    \TT = (G \times H)/\cL \ts .
\]
Note that, in the Euclidean setting, $\cL^{\ts 0}$ coincides with the
standard dual lattice $\cL^{*}$ of $\cL$.

In this setting, we have the following important result.

\begin{theorem}\label{thm:diffraction-formula}
  Consider a CPS\/ $(G,H,\cL)$ according to Eq.~\eqref{eq:candp}, and
  fix some\/ $c\in C_{\mathsf{c}} (H)$ that is a positive definite
  function on\/ $H$. Then, the weighted Dirac comb
\[
    \omega^{}_{c} \, := \sum_{x\in L} c (x^{\star}) \, \delta_{x}
\]
  is a translation bounded pure point measure that is Fourier
  transformable, with 
\[
    \widehat{\omega^{}_{c}} \, = \, \dens (\cL) 
    \sum_{u\in L^{0}} \widehat{c}\ts 
     (-u^{\star}) \, \delta_{u} \ts ,
\]
where\/ $L^{0} = \pi (\cL^{\ts 0})$ according to the dual CPS of
Eq.~\eqref{eq:dual-candp}. Here, $\widehat{\omega^{}_{c}}$ is a
translation bounded and positive pure point measure on\/
$\widehat{G}$.
\end{theorem}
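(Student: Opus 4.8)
The plan is to realise $\omega^{}_{c}$ as a projection of a weighted comb on the lattice $\cL$ and to reduce the statement to Poisson summation for $\cL$. Since $c\in C_{\mathsf{c}}(H)$, the comb $\omega^{}_{c}$ is supported on $\oplam (\supp c)$; because $\cL$ is a lattice in $G\times H$ and $\supp c$ is compact, the set $\cL\cap(U\times\supp c)$ is finite for every relatively compact $U\subseteq G$, so $\oplam (\supp c)$ is uniformly discrete. Together with $\lvert c(x^{\star})\rvert\leqslant\|c\|^{}_{\infty}$, this gives $\omega^{}_{c}\in\cM^{\infty}(G)$ and shows that it is a pure point measure. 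To reach the Fourier transform, I would introduce on $G\times H$ the weighted comb $\nu^{}_{c}=(1\otimes c)\,\delta^{}_{\cL}$, where $(1\otimes c)(s,t)=c(t)$, so that $\omega^{}_{c}$ is its image under $\pi$. Concretely, for $\phi\in C_{\mathsf{c}}(G)$ and any $\eta\in C_{\mathsf{c}}(H)$ with $\eta\equiv 1$ on $\supp c$, one has $\omega^{}_{c}(\phi)=\nu^{}_{c}(\phi\otimes\eta)$, since the weight vanishes off $\supp c$.

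Next, I would establish Fourier transformability together with positivity of the transform in one stroke, by checking that $\omega^{}_{c}$ is a positive definite measure. The lattice comb $\delta^{}_{\cL}$ is positive definite, because Poisson summation turns $\delta^{}_{\cL}(g*\widetilde g)$ into $\dens(\cL)\sum_{u\in\cL^{\ts 0}}\lvert\widehat g\,(u)\rvert^{2}\geqslant 0$ for $g\in C_{\mathsf{c}}(G\times H)$, and $1\otimes c$ is a positive definite function on $G\times H$ as $c$ is one on $H$. By Bochner's theorem, $\widehat c\geqslant 0$ with $\int_{\widehat H}\widehat c \dd\theta^{}_{\widehat H}=c(0)<\infty$, so $\widehat{1\otimes c}=\delta^{}_{0}\otimes\widehat c$ is a finite measure; hence the product $\nu^{}_{c}=(1\otimes c)\,\delta^{}_{\cL}$ is again positive definite, with $\widehat{\nu^{}_{c}}=(\delta^{}_{0}\otimes\widehat c)*\widehat{\delta^{}_{\cL}}$ a legitimate positive measure (the convolution makes sense because one factor is finite). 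Positive definiteness is inherited by the projection $\omega^{}_{c}$, so the general theory of positive definite measures yields that $\omega^{}_{c}$ is Fourier transformable and that $\widehat{\omega^{}_{c}}$ is a positive, translation bounded measure on $\widehat G$.

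It then remains to identify $\widehat{\omega^{}_{c}}$. Using $\widehat{\delta^{}_{\cL}}=\dens(\cL)\,\delta^{}_{\cL^{\ts 0}}$ and $\cL^{\ts 0}=\{(u,u^{\star})\mid u\in L^{0}\}$, the convolution above evaluates to $\widehat{\nu^{}_{c}}=\dens(\cL)\sum_{u\in L^{0}}\delta^{}_{u}\otimes\bigl(\widehat c\,(\cdot-u^{\star})\,\theta^{}_{\widehat H}\bigr)$. Passing the identity $\omega^{}_{c}(\phi)=\nu^{}_{c}(\phi\otimes\eta)$ through the transform and computing the inner $\widehat H$-integral by Parseval turns $\int_{\widehat H}\widehat\eta\,(v)\,\widehat c\,(v-u^{\star})\dd v$ into $\int_{H}\eta(t)\,c(t)\,\overline{\langle u^{\star},t\rangle}\dd t$. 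Here the compact support of $c$, with $\eta\equiv 1$ on it, collapses $\eta\, c$ to $c$, so the integral equals $\widehat c\,(-u^{\star})$ and the auxiliary $\eta$ drops out. This produces the asserted formula $\widehat{\omega^{}_{c}}=\dens(\cL)\sum_{u\in L^{0}}\widehat c\,(-u^{\star})\,\delta^{}_{u}$.

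The main obstacle I anticipate is the passage between $G$ and $G\times H$, that is, relating the transform $\widehat{\omega^{}_{c}}$ on $\widehat G$ to $\widehat{\nu^{}_{c}}$ on $\widehat G\times\widehat H$: one cannot literally restrict $\widehat{\nu^{}_{c}}$ to the slice $\widehat G\times\{0\}$, which is a null set. The test-function route circumvents this, the decisive point being the collapse $\eta\, c=c$ forced by the compact support of $c$, which makes the result independent of the regulariser $\eta$. Checking that this collapse is valid, and that the resulting sum over the (generally dense) set $L^{0}$ is indeed a locally finite, translation bounded measure, are exactly the places where one must lean on the positive definite measure machinery rather than on formal manipulation.
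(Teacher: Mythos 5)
The paper does not actually prove this theorem: it gives a two\--paragraph sketch (PSF plus uniform distribution of the lifted points in the window) and defers the full argument to \cite{RS} and \cite[Prop.~12.2]{Nicu-Band2}. Your route --- lift to $G\times H$, form $\nu^{}_{c}=(1\otimes c)\,\delta^{}_{\cL}$, apply the PSF $\widehat{\delta^{}_{\cL}}=\dens(\cL)\,\delta^{}_{\cL^{0}}$ together with the product/convolution theorem for transformable measures, then project back --- is in the same PSF\--based spirit and is essentially the strategy of those references. The preliminary steps (local finiteness of $\oplam(\supp c)$, translation boundedness, positive definiteness of $\nu^{}_{c}$ via Bochner and $\chi\cdot(g*\widetilde g)=(\chi g)*\widetilde{(\chi g)}$) are fine. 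But the two places you yourself flag as "the main obstacle" are precisely where the proof is not yet there, and they are genuine gaps rather than routine checks.

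First, "positive definiteness is inherited by the projection" is not a valid principle as stated: for $g\in C_{\mathsf{c}}(G)$ the function $(g*\widetilde g)\otimes\eta$ is \emph{not} a convolution square on $G\times H$, so testing $\nu^{}_{c}$ against it gives you nothing. What you can legitimately extract is $\nu^{}_{c}\bigl((g\otimes\eta)*\widetilde{(g\otimes\eta)}\bigr)=\omega^{}_{c\cdot(\eta*\widetilde\eta)}(g*\widetilde g)\geqslant 0$, i.e.\ positive definiteness of $\omega^{}_{c\cdot(\eta*\widetilde\eta)}$; to reach $\omega^{}_{c}$ you must then send $\eta*\widetilde\eta\to 1$ uniformly on $\supp(c)$ (e.g.\ via a F{\o}lner net in $H$) and use that positive definiteness survives vague limits. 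This repair is missing. Second, and more seriously, the identification of $\widehat{\omega^{}_{c}}$ cannot proceed by the clean "collapse $\eta\ts c=c$" you describe, because transformability is tested on $\omega^{}_{c}(g*\widetilde g)=\widehat{\omega^{}_{c}}(\lvert\check g\rvert^{2})$ and $\check g$ is \emph{not} compactly supported on $\widehat G$: after applying $\widehat{\nu^{}_{c}}$ you face the genuinely infinite sum $\sum_{u\in L^{0}}\lvert\check g(u)\rvert^{2}\,\widehat{c^{}_{\eta}}(-u^{\star})$ over the (generally dense) group $L^{0}$, and removing the regulariser requires a uniform summability bound, not a pointwise one. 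Likewise, the local finiteness and translation boundedness of $\sum_{u\in L^{0}}\widehat c\ts(-u^{\star})\,\delta^{}_{u}$ does not follow from $\widehat c\geqslant 0$ being integrable alone; restricting $\widehat{\nu^{}_{c}}$ to a slice $\widehat G\times\{0\}$ is, as you note, meaningless, and the needed control comes from uniform distribution of $\{u^{\star}\mid u\in L^{0}\cap K\}$ (or an equivalent averaging argument). These are exactly the analytic points that \cite{RS} and \cite{Nicu-Band2} are cited for, so deferring them to "the positive definite measure machinery" leaves the proof incomplete at its core.
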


\begin{proof}[Sketch of proof]
  The result is a consequence of the Poisson summation formula (PSF)
  together with the uniform distribution of the lifted points in the
  window; see \cite{Moody} and references therein. In fact, it is an
  interesting observation that the validity of the PSF, via Weyl sums,
  can be used to derive the uniform distribution --- the PSF thus
  appears in a double role \cite{RS}.  The factor $\dens (\cL)$ stems
  from the PSF, compare \cite[Thm.~9.1 and Lemma~9.3]{TAO} for the
  Euclidean case.
   
  The general version of the claim as stated here is proved in
  \cite{RS} as well as in \cite[Prop.~12.2]{Nicu-Band2}. Note that
  part (iii) of this proposition, which is what we need here, does
  \emph{not} use or need the assumption of Fourier transformability of
  $\omega^{}_{c}$.
\end{proof}

Given a $\sigma$-compact LCAG $G$, an \emph{averaging sequence} $\cA =
(A_{n})^{}_{n\in\NN}$ consists of relatively compact open sets $A_{n}$
with $\overline{A_{n}} \subset A_{n+1}$ for all $n\in\NN$ and
$\bigcup_{n\in\NN} A_{n} = G$. Here, $\sigma$-compactness of $G$ is
equivalent to the existence of such an averaging sequence. Now, $\cA$
is called \emph{van Hove} if, for any compact $K\subset G$,
\[
     \lim_{n\to\infty} \frac{\vol (\partial^{K}\! A_{n})}
     {\vol (A_{n})} \, = \, 0 \ts ,
\]
where, for an arbitrary open set $B\subset G$, $\partial^{K}\! B :=
\bigl(\ts\ts \overline{B+K}\setminus B\bigr) \cup
\bigl((B^{\mathsf{c}} - K)\cap \overline{B}\, \bigr)$, with
$B^{\mathsf{c}}$ the complement of $B$ in $G$, is the (closed)
$K$-boundary of $B$.  The existence of van Hove sequences in
$\sigma$-compact LCAGs is shown in \cite{Martin}. Note that each van
Hove sequence is also F{\o}lner, but not vice versa; compare the
discussion in \cite{BL} and references therein.

Averaging sequences are needed to define the density of a point set 
$\vL\subset G$,
\[
    \dens (\vL) \, := \lim_{n\to\infty}
    \frac{\card (\vL \cap A_{n}) }{\vol (A_{n})} \ts ,
\]
provided the limit exists. More generally, if the existence of $\dens
(\vL)$ is not clear, one has to work with lower and upper densities
according to
\begin{equation}\label{eq:lower-upper-dens}
   \underline{\dens} (\vL) \, := \, \liminf_{n\to\infty}
   \frac{\card (\vL \cap A_{n})}{\vol (A_{n})}
   \quad \text{and} \quad
   \overline{\dens} (\vL) \, := \, \limsup_{n\to\infty}
   \frac{\card (\vL \cap A_{n})}{\vol (A_{n})} \ts ,
\end{equation}
which always exist, with $0\leqslant\underline{\dens} (\vL)
\leqslant\overline{\dens} (\vL)\leqslant\infty$. When $\vL$ is
uniformly discrete, one has $\overline{\dens} (\vL)<\infty$.
Moreover, as a result of \cite{HR,Nicu-Band2}, one also gets the
following estimate.
\begin{fact}\label{fact:dens-bound}
  Let\/ $\vL=\oplam (W)$ be a projection set for the CPS of
  Eq.~\eqref{eq:candp}, with relatively compact window\/ $W\subset
  H$. Then, one has
\[
    \dens (\cL)\, \theta^{}_{\! H} (W^{\circ}) \, \leqslant \,
    \underline{\dens} (\vL) \, \leqslant \,
    \overline{\dens} (\vL) \, \leqslant \,
    \dens (\cL)\, \theta^{}_{\! H} (\,\overline{\! W\!}\,) \ts , 
\]
relative to any fixed van Hove sequence\/ $\cA$ in\/ $G$.  When\/ $
\underline{\dens} (\vL) = \overline{\dens} (\vL)$, the density of\/
$\vL$ exists, relative to\/ $\cA$, and satisfies the corresponding
inequality. \qed
\end{fact}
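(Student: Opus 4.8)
The plan is to reduce the two-sided estimate to a single mean-value statement for continuous weights, and then to sandwich\/ $1^{}_W$ between such weights. The key ingredient I would establish is the equidistribution identity
\[
   \lim_{n\to\infty} \frac{1}{\vol (A_n)} \sum_{x\in L\cap A_n}
   g (x^{\star}) \, = \, \dens (\cL) \int_{H} g \dd \theta^{}_{\! H}
\]
for every real-valued\/ $g\in C_{\mathsf{c}} (H)$, the limit taken along the fixed van Hove sequence\/ $\cA$. Granting this, the bounds follow by approximation. By inner regularity of\/ $\theta^{}_{\! H}$ on the open set\/ $W^{\circ}$ together with Urysohn's lemma, there are\/ $g\in C_{\mathsf{c}} (H)$ with\/ $0\leqslant g\leqslant 1^{}_{W^{\circ}}\leqslant 1^{}_W$ and\/ $\int_{H} g\dd\theta^{}_{\! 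H}$ arbitrarily close to\/ $\theta^{}_{\! H} (W^{\circ})$; since\/ $g\leqslant 1^{}_W$ pointwise yields\/ $\sum_{x\in L\cap A_n} g(x^{\star}) \leqslant \card (\vL\cap A_n)$, dividing by\/ $\vol (A_n)$ and passing to the limit gives\/ $\dens (\cL)\,\theta^{}_{\! H} (W^{\circ}) \leqslant \underline{\dens} (\vL)$. Symmetrically, outer regularity on the compact set\/ $\,\overline{\!W\!}\,$ provides\/ $h\in C_{\mathsf{c}} (H)$ with\/ $h\geqslant 1^{}_{\overline{\!W\!}}\geqslant 1^{}_W$ and\/ $\int_{H} h\dd\theta^{}_{\! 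H}$ close to\/ $\theta^{}_{\! H} (\,\overline{\!W\!}\,)$, whence\/ $\overline{\dens} (\vL) \leqslant \dens (\cL)\,\theta^{}_{\! H} (\,\overline{\!W\!}\,)$. The final claim about the existence of\/ $\dens (\vL)$ is then immediate from\/ $\underline{\dens} (\vL) = \overline{\dens} (\vL)$.

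For the equidistribution identity, which is the crux, I would first use Theorem~\ref{thm:diffraction-formula}. For positive definite\/ $c\in C_{\mathsf{c}} (H)$, the comb\/ $\omega^{}_c$ is Fourier transformable with pure point transform, and the coefficient of\/ $\delta_0$ in\/ $\widehat{\omega^{}_c}$ equals\/ $\dens (\cL)\,\widehat{c}\ts (0) = \dens (\cL)\int_{H} c\dd\theta^{}_{\! H}$. Identifying this coefficient at the trivial character with the mean value (density) of\/ $\omega^{}_c$ along the van Hove sequence --- a standard consistency property of Fourier--Bohr coefficients with the point part of a Fourier transformable, translation bounded measure --- proves the identity for such\/ $c$. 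The extension to arbitrary real\/ $g\in C_{\mathsf{c}} (H)$ is then a linearity and approximation argument: by polarisation and an approximate-identity argument, the linear span of the positive definite, compactly supported functions\/ $f*\widetilde{f}$ with\/ $f\in C_{\mathsf{c}} (H)$ is dense in\/ $C_{\mathsf{c}} (H)$ with control on supports, and a single positive definite majorant of\/ $1^{}_{\supp (g)}$ provides a uniform bound on\/ $\limsup_n \vol(A_n)^{-1}\card\{x\in L\cap A_n \mid x^{\star}\in\supp (g)\}$, keeping the approximation non-circular.

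Alternatively, a self-contained route is a direct lattice-point count. Writing the weighted sum as\/ $\sum_{\lambda\in\cL} 1^{}_{A_n} (\pi (\lambda))\, g (\pi^{}_{\mathrm{int}} (\lambda))$ and comparing it with\/ $\dens (\cL) \int_{G\times H} \bigl( 1^{}_{A_n}\otimes g\bigr) = \dens (\cL)\,\vol (A_n) \int_{H} g\dd\theta^{}_{\! H}$, one fixes a relatively compact fundamental domain for the co-compact lattice\/ $\cL\subset G\times H$ and sets\/ $K=\supp (g)$. Every lattice point contributing to the difference between the sum and the integral projects into a bounded neighbourhood of\/ $\partial A_n$, so their number is controlled by\/ $\vol (\partial^{K'}\! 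A_n)$ for a suitable compact\/ $K'\subset G$, while the uniform continuity of\/ $g$ bounds the oscillation of the weight across a single cell. I expect the main obstacle to lie precisely here: one must turn the geometric van Hove condition into a quantitative\/ $o (\vol (A_n))$ estimate on the boundary lattice points, uniformly with respect to the translations implicit in the\/ $\|\cdot\|^{}_K$ norms. This is the step that genuinely uses that\/ $\cA$ is van Hove rather than merely F{\o}lner, and it is what the cited results of\/ \cite{HR,Nicu-Band2} supply in full generality.
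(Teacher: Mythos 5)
The paper itself offers no proof of Fact~\ref{fact:dens-bound}: it is stated with a bare \verb|\qed| and attributed to \cite{HR,Nicu-Band2}, so there is no in-paper argument to compare against. Your reconstruction follows the standard route of those references, and the reduction step is complete and correct as written: inner regularity of $\theta^{}_{\! H}$ on $W^{\circ}$ and outer regularity on the compact set $\,\overline{\!W\!}\,$, combined with Urysohn (available since LCAGs are normal), produce the sandwich $g\leqslant 1^{}_{W^{\circ}}\leqslant 1^{}_{W}\leqslant 1^{}_{\,\overline{\!W\!}\,}\leqslant h$ with integrals as close as desired to the two extreme measures, and the monotone comparison of the weighted sums with $\card(\vL\cap A_{n})$ then yields both inequalities once the Weyl-type mean-value identity for $C_{\mathsf{c}}(H)$-weights is granted. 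You correctly identify that identity as the entire content of the statement.

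Two caveats on your first route to the equidistribution identity. First, the ``standard consistency property'' $\widehat{\omega^{}_{c}}(\{0\})=\MM(\omega^{}_{c})$ is \emph{not} a property of arbitrary Fourier transformable translation bounded measures; for those, the mean along a prescribed van Hove sequence need not exist. It holds here because $\omega^{}_{c}$ with $c\in C_{\mathsf{c}}(H)$ positive definite is a norm-almost periodic measure --- exactly the point the paper invokes in the proof of Proposition~\ref{prop:dens-versus-gamma}, citing \cite{BM,RS} --- and for such measures the mean exists along every van Hove sequence and agrees with the atom at the trivial character. You should make that hypothesis explicit. Second, there is a latent circularity: the paper's sketch of Theorem~\ref{thm:diffraction-formula} derives the formula for $\widehat{\omega^{}_{c}}$ \emph{from} uniform distribution of the lifted points, which is essentially what you are trying to prove; the circle is broken only if one uses the PSF-based proof of that theorem from \cite{RS,Nicu-Band2}, so route (a) must lean on that version. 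Your second, self-contained route (counting $\cL$-points against a relatively compact fundamental domain, with the discrepancy confined to lattice points over $\partial^{K'}\!A_{n}$) avoids both issues and is closer to what \cite{HR} actually does; the quantitative step you flag there is indeed where the van Hove property, rather than mere F{\o}lner-ness, enters, and it is routine once the fundamental domain is fixed. With either route properly referenced, the proof is correct.
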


In our context, we also need the van Hove property for a meaningful
definition of the autocorrelation of a translation bounded measure
$\omega\in\cM^{\infty} (G)$ with respect to $\cA$. Consider
\[
     \gamma^{(n)}_{\omega} \, := \,
     \frac{\ts \omega|^{}_{A_{n}} \! * \widetilde{\omega|^{}_{A_{n}}}}
     {\vol (A_{n})} \ts ,
\]
which gives a well-defined sequence of positive definite measures on
$G$. As a consequence of the translation boundedness of $\omega$, this
sequence has at least one vague accumulation point, each of which is
called an autocorrelation measure of $\omega$; see \cite{Hof} or
\cite{TAO} for background. If only one such accumulation point exists,
$\gamma^{}_{\omega} := \lim_{n\to\infty} \, \gamma^{(n)}_{\omega}$
exists and is called the \emph{autocorrelation} of $\omega$ relative
to $\cA$.

By construction, any autocorrelation measure $\gamma$ is a positive
definite measure, and hence Fourier transformable by standard
arguments \cite{BF}. Its Fourier transform, $\widehat{\gamma}$, is
then a translation bounded positive measure on the dual group,
$\widehat{G}$, and called the (corresponding) \emph{diffraction
  measure}. If the autocorrelation for $\cA$ is unique, then so is the
diffraction measure, which is thus also referenced via $\cA$. It is
this measure $\widehat{\gamma}$ that we will explore below, and
ultimately use to gain access to the dynamical spectrum of a natural
dynamical system defined via $\omega$.

\section{Visible lattice points as guiding example}\label{sec:visible}

By definition, the visible points of a lattice in Euclidean space are
the lattice points that are visible from the origin. Although all
results below hold in much greater generality, we prefer to begin with
the visible points
\[
    V\,=\,\{(x,y)\in\ZZ^{2} \mid \gcd(x,y)=1\}
   \,=\, \ZZ^{2}\setminus \! 
   {\textstyle \bigcup\limits_{p \in \PP}}\, p\ZZ^{2}
\] 
of the (unimodular) square lattice $\ZZ^2 \subset \RR^{2}$, where
$\PP$ denotes the set of rational primes.  A central patch of the set
$V$ is illustrated in Figure~\ref{fig:visible}. We refer the reader to
\cite{BMP,TAO,BH} for proofs of the subsequent results, which we
repeat here in an informal manner. In Section~\ref{sec:families}, we
shall discuss a substantial extension in the form of coprime
sublattice families.

It is well known that $V$ is non-periodic and has arbirarily large
holes, so it fails to be a Delone set. Nevertheless, its \emph{natural
density} exists and is equal to
\[
   \dens (V) \, = \prod_{p\in\PP}\Big(1-\frac{1}{p^2}\Big)
   \, = \, \frac{1}{\zeta(2)} \, = \, \frac{6}{\pi^2} \ts .
\]
The term `natural' refers to the use of centred, nested discs as
averaging regions; see \cite{TAO} and \cite[Appendix]{BMP} for a more
detailed discussion of this aspect. In other words, we use a van Hove
sequence $\cA$ of centred, open discs with increasing radius. Discs
can be replaced by other bodies with nice boundaries, but one has to
work with \emph{tied densities} in the sense of \cite{BMP} in order to
deal with the holes in $V$ properly.

\begin{center}
\begin{figure}
\includegraphics[width=0.84\textwidth]{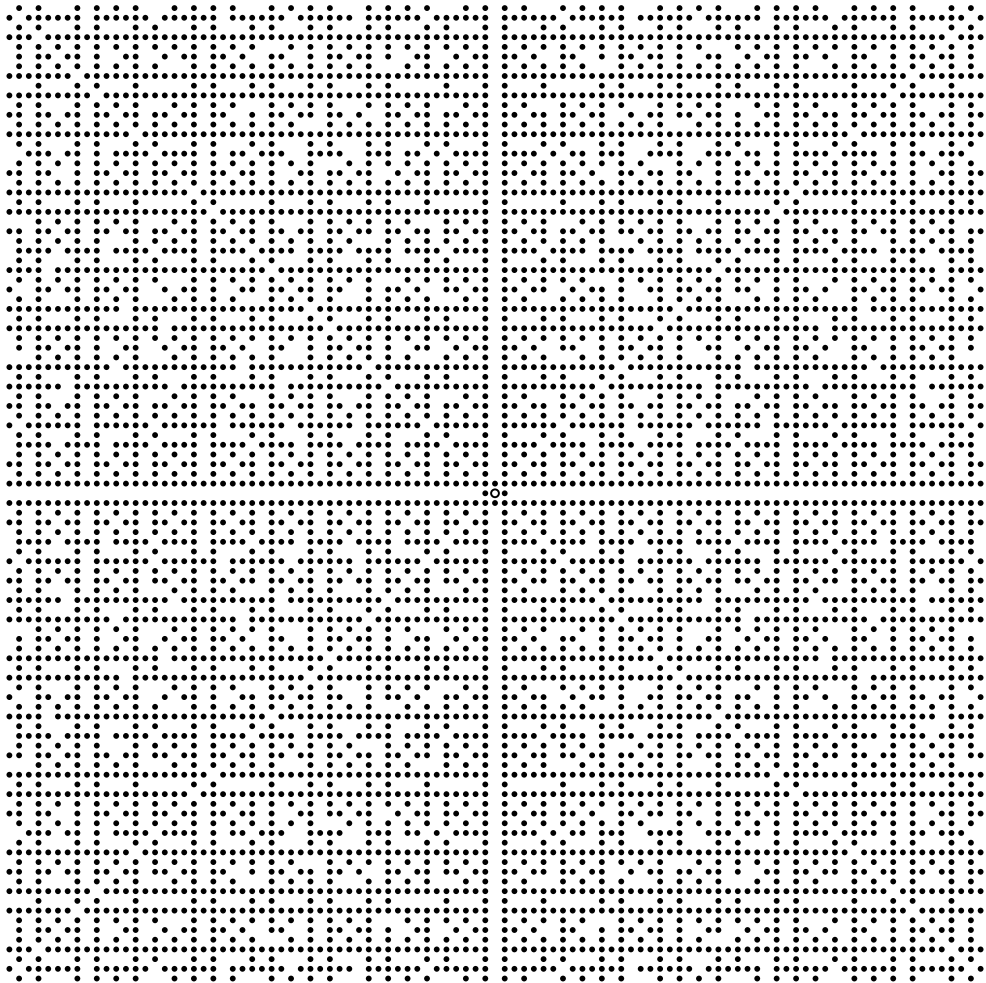}
\caption{A central patch of the visible points $V$ of the square
  lattice $\ZZ^{2}$.}
\label{fig:visible}
\end{figure}
\end{center}

Moreover, relative to $\cA$, one can explicitly compute the natural
autocorrelation measure $\gamma^{}_{V}$ together with its Fourier
transform $\widehat{\gamma^{}_{V}}$, the \emph{diffraction measure} of
$V$. It turns out that the latter is a positive pure point measure
which is translation bounded and supported on the points of $\QQ^{2}$
with square-free (s.f.) denominator, so
\[
   \widehat{\gamma^{}_{V}} \, =
   \sum_{\substack{k\in\QQ^{2}\\[0.3mm] \den (k) \text{ s.f.}}}
   I(k) \, \delta^{}_{k} \ts , \quad \text{where }
   I(k) \,=\, \biggl(\frac{6}{\pi^2}
   \prod_{p\mid\den (k)}\frac{1}{1 - p^2}\biggr)^{\! 2}.
\]
Fig.~\ref{fig:vispodiff} illustrates the diffraction measure.

\begin{center}
\begin{figure}
\includegraphics[width=0.84\textwidth]{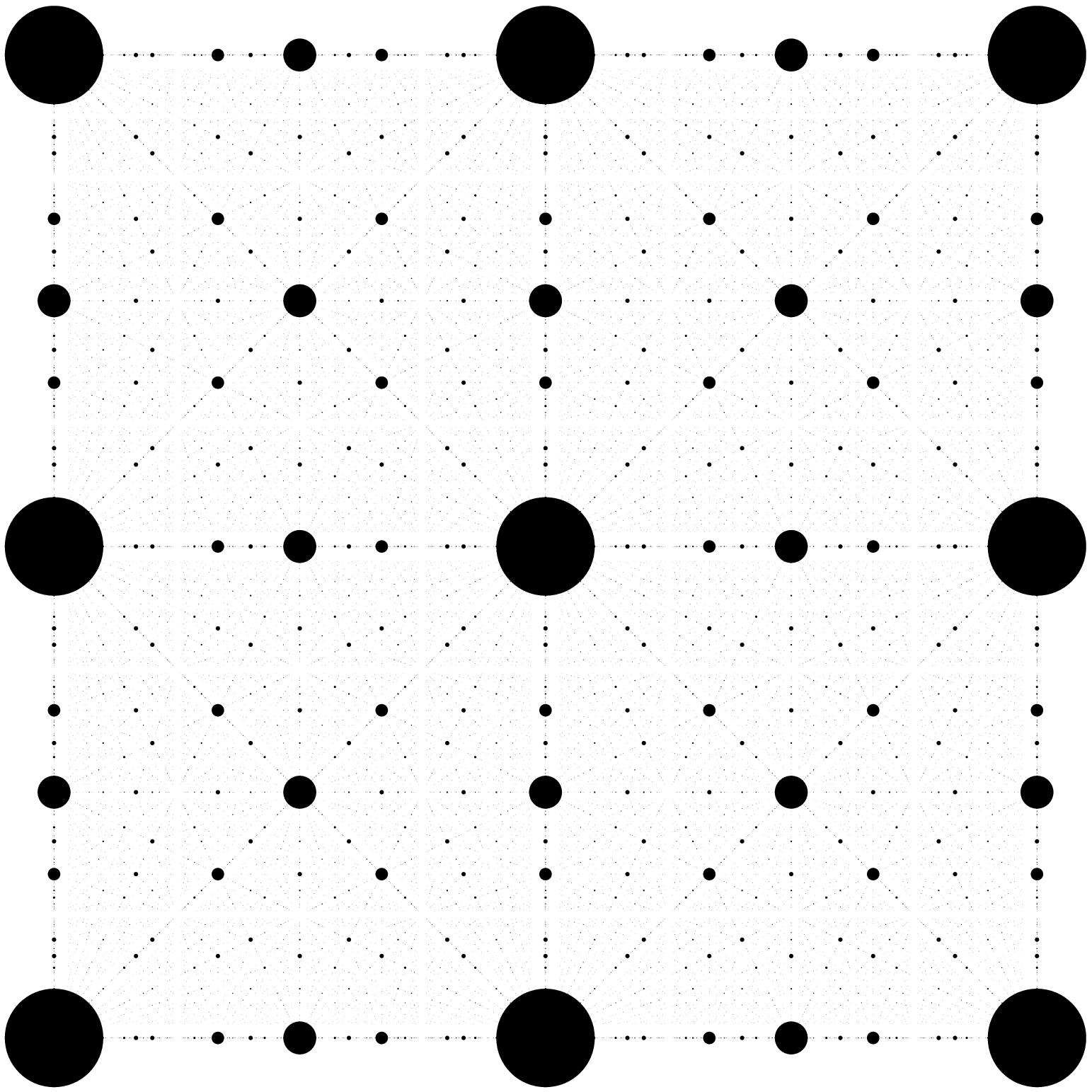}
\caption{Diffraction of the visible points of $\ZZ^{2}$. A point
  measure at $k$ with intensity $I(k)$ is shown as a disk centred at
  $k$ with area proportional to $I(k)$. Shown are the intensities with
  $I(k)/I(0)\geqslant 10^{-6}$ and $k\in [0,2]^{2}$. Its lattice of
  periods is $\ZZ^{2}$.}
\label{fig:vispodiff}
\end{figure}
\end{center}

To compare this with the \emph{dynamical spectrum}, let us define the
(discrete) hull of $V$ as
\[
   \XX_{V} \, = \, \overline{\{t+V\,\mid\,t\in\ZZ^{2}\}},
\]
with the closure being taken in the (metric) \emph{local topology},
where two subsets of $\ZZ^{2}$ are close if they agree on a large ball
around the origin. The hull $\XX_{V}$ is then compact and the
translational action of $\ZZ^{2}$ on the hull is continuous, so
$(\XX_{V},\ZZ^{2})$ is a \emph{topological dynamical system}. Since
$V$ contains holes of arbitrary size, the empty set is an element of
$\XX_{V}$. As a result, $\XX_{V}$ fails to be minimal, and the set $V$
is non-periodic, but not aperiodic in the terminology of
\cite{TAO}. Also, the hull is rather `large' in terms of the variety
of its members, unlike what one is used to from hulls of substitution
generated point sets.  Astonishingly, one can explicitly characterise
the elements of $\XX_{V}$ as the subsets of $\ZZ^{2}$ that miss at
least one coset modulo the subgroup $p\ZZ^{2}$ for any prime $p \in
\PP$ (for instance, $V$ itself misses by definition the zero coset
$0+p\ZZ^2$ modulo $p\ZZ^2$ for all $p\in\PP$).

There is a natural Borel probability measure $\nu$ on the hull
$\XX_{V}$ that originates from the natural \emph{patch frequencies} of
$V$ in space. More precisely, the frequency $\nu(\cP)$ of a
\emph{$\rho$-patch} $\cP=(V-t)\cap B_\rho(0)$ of $V$ at location $t$
(the natural density of all such $t$'s) can again be calculated
explicitly and one can then assign this very value to the
\emph{cylinder set} $C_{\cP}$ of elements $A$ of the hull with $A\cap
B_\rho(0)=\cP$. This can then uniquely be extended to a
$\ZZ^{2}$-invariant probability measure, also called $\nu$, on the
hull $\XX_{V}$, and one obtains a \emph{measure-theoretic dynamical
  system} $(\XX_{V},\ZZ^{2},\nu)$. The measure $\nu$ gives no weight
to the empty set (as a member of $\XX_{V}$), and the system becomes
aperiodic in the measure-theoretic sense of \cite[Def.~11.1]{TAO}.
  
Here, one is also interested in the \emph{dynamical spectrum}, that is
the spectrum of the corresponding unitary representation $U$ of
$\ZZ^{2}$ on the Hilbert space $L^{2}(\XX_{V},\nu)$, with the standard
inner product
\[
   \langle f \, | \, g\rangle \, =
   \int_{\XX_{V}}\overline{f(x)}\, g(x) \dd \nu (x) \ts .
\] 
The system $(\XX_{V},\ZZ^{2},\nu)$ has pure point dynamical spectrum
if and only if the eigenfunctions span all of
$L^{2}(\XX_{V},\nu)$. Since $V$ is $\nu$-generic, the individual
diffraction measure of $V$ coincides with the diffraction measure of
the system $(\XX_V,\ZZ^{2},\nu)$ in the sense of~\cite{BL,BLvE}.  By
the general equivalence theorem \cite[Thm.~7]{BL}, the pure point
nature of the dynamical spectrum follows. Moreover, the spectrum (in
additive notation) is nothing but the set of points in $\QQ^{2}$ with
square-free denominator, which form a subgroup of $\QQ^{2}$.

The set $V$ is a \emph{weak model set} in our above terminology,
originating from the CPS $(G,H,\cL)$ of Eq.~\eqref{eq:candp} with
LCAGs $G=\ZZ^{2}$, equipped with its natural discrete topology, and
$H:=\prod_{p} \ZZ^{2}\!  /\nts p\ZZ^{2}$, where $\ZZ^{2}\! / \nts
p\ZZ^{2}$ is a quotient group of order $p^2$ and $H$ is endowed with
the product topology with respect to the discrete topology on its
factors. In particular, the internal group is compact. The lattice
$\cL$ (a discrete and co-compact subgroup of $\ZZ^{2}\times H$) is
defined by the diagonal embedding of $\ZZ^{2}$,
\[ 
   \cL \,= \, \big\{ \bigl(x,\iota(x)\bigr) \mid 
    x\in\ZZ^{2}\big\} ,
\]
where $\iota (x) = (x_{p})^{}_{p\in\PP}$, with $x_{p}$ the reduction
of $x \bmod p$, is the $\star$-map in this case. In fact, one even has
$\pi (\cL) = \ZZ^{2}$ and $\pi^{}_{\mathrm{int}} (\cL) = H$ here.
With
\[
   W  :=  \prod_{p\in\PP}\bigl( \ZZ^{2}\! / \nts p \ZZ^{2}
   \setminus \{0 \}\bigr) \, \subset \, H \ts ,
\]
one clearly obtains $V=\oplam (W) \subset \ZZ^{2}$. Note that $W$ is
compact and satisfies $W=\partial W$, so has no interior. Moreover,
$W$ has positive measure $\theta^{}_{\! H} (W) = \prod_{p}
(1-\tfrac{1}{p^{2}})=1/\zeta(2)$ with respect to the normalised Haar
measure $\theta^{}_{\! H}$ on the compact group $H$. Note that
$\theta^{}_{\! H} (W)$ coincides with the natural density of $V$.

Employing the ergodicity of the frequency measure $\nu$, one can in
fact show that the dynamical system $(\XX_{V},\ZZ^2,\nu)$ is
isomorphic with the Kronecker system $(H,\ZZ^{2},\theta^{}_{\! H})$,
with the action of $\ZZ^2$ being given by addition of $\iota(x)$.
Here, we have also used the isomorphism $H \simeq (\ZZ^{2} \times H)
/\nts \cL$.  The setting of this section can be generalised to many
similar systems, all arithmetic in nature; compare \cite{BH} and
references therein.  Let us thus develop a wider scheme for weak model
sets that comprises all of them.

\section{Diffraction of weak model sets}\label{sec:diffraction}

Let us assume that a CPS $(G,H,\cL)$ is given, including some
normalisation of the Haar measures $\theta^{}_{\nts G}$ and
$\theta^{}_{\!  H}$ as indicated earlier. Since one often has to deal
with several different lattices within $G\times H$, we do \emph{not}
assume $\cL$ to be unimodular, so factors $\dens (\cL)$ will appear in
our formulas. For a single lattice $\cL$, one could rescale
$\theta^{}_{\!  H}$ to make $\cL$ unimodular, which is another
convention that is also often used.

\begin{prop}\label{prop:dens-versus-gamma}
  Let\/ $(G,H,\cL)$ be a CPS as in Eq.~\eqref{eq:candp}, with\/ $G$
  being\/ $\sigma$-compact and\/ $H$ compactly generated, and let\/
  $\varnothing\ne W\subseteq H$ be compact. Next, consider the weak
  model set\/ $\vL = \oplam (W)$ and assume that a van Hove averaging
  sequence\/ $\cA$ is given relative to which the density\/ $\dens
  (\vL)$ and the autocorrelation measure\/ $\gamma^{}_{\! \vL}$ are to
  be defined.  Then, the following statements are equivalent.
\begin{itemize}
\item[{$(1)$}] The lower density of\/ $\vL$ is maximal, 
  $ \underline{\dens} (\vL) = \dens (\cL)\, \theta^{}_{\! H} (W)$;
\item[{$(2)$}] The density of\/ $\vL$ exists and is maximal,
  $\dens (\vL) = \dens (\cL)\, \theta^{}_{\! H} (W)$;
\item[{$(3)$}] The autocorrelation of\/ $\vL$ exists and satisfies
  $\gamma^{}_{\! \vL} = \dens (\cL)\, \omega^{}_{c^{}_{W}}$.
\end{itemize}
Here, $c^{}_{W} = 1^{}_{W} * \widetilde{1^{}_{W}}$ is the
covariogram function of\/ $W$.
\end{prop}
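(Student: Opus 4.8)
The plan is to run the three items as a cycle, with $(1)\Leftrightarrow(2)$ a formality and the substance in $(2)\Leftrightarrow(3)$. The equivalence $(1)\Leftrightarrow(2)$ is immediate from Fact~\ref{fact:dens-bound}: since $W$ is compact we have $\overline{W}=W$, so the outer estimate there reads $\overline{\dens}(\vL)\leqslant\dens(\cL)\,\theta^{}_{\! H}(W)$. Hence $\underline{\dens}(\vL)=\dens(\cL)\,\theta^{}_{\! H}(W)$ already forces $\overline{\dens}(\vL)$ to the same value, so the density exists and is maximal, and the converse is trivial.

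For $(2)\Rightarrow(3)$ I would first harvest the \emph{upper} bound on the autocorrelation coefficients for free. Writing $\eta(z)$ for the coefficient of $\delta^{}_{z}$ in any vague accumulation point $\gamma$ of $\gamma^{(n)}_{\omega}$ with $\omega=\delta^{}_{\vL}$, a pair count identifies $\eta(z)$ with a subsequential density of $\oplam\!\bigl(W\cap(W-z^{\star})\bigr)$. Since $W$ is compact, this window equals its own closure, so Fact~\ref{fact:dens-bound} gives $\eta(z)\leqslant\dens(\cL)\,\theta^{}_{\! H}\bigl(W\cap(W-z^{\star})\bigr)=\dens(\cL)\,c^{}_{W}(z^{\star})$ for every $z$ and every accumulation point, with equality at $z=0$ by the maximality hypothesis. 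Thus $\dens(\cL)\,\omega^{}_{c^{}_{W}}-\gamma$ has non-negative coefficients and vanishes at $0$, and it remains to show it vanishes identically.

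This is the crux, and the obstacle is exactly that $W$, and the self-intersections $W\cap(W-z^{\star})$, may have boundary of positive measure. A matching \emph{lower} bound on $\eta(z)$ cannot then be obtained by squeezing $1^{}_{W}$ between continuous functions, nor by additivity over a partition $W=W'\sqcup W''$: both routes leak the boundary mass $\theta^{}_{\! H}(\partial W)$ and only reproduce the trivial interior bound. I would break this on the Fourier side. The key consequence of maximality is that no mass escapes near $\partial W$: for every bounded continuous $g$ on $H$ one has $\tfrac{1}{\vol(A_n)}\sum_{x\in\vL\cap A_n}g(x^{\star})\to\dens(\cL)\int_{W}g\dd\theta^{}_{\! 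H}$, which I would prove by dominating $1^{}_{W}$ from above by $h\in C_{\mathsf{c}}(H)$, invoking the uniform distribution of the lifted lattice points for the \emph{continuous} weights $h$ and $hg$ (no maximality needed there, this is the mechanism behind Theorem~\ref{thm:diffraction-formula}), and letting $\int h\downarrow\theta^{}_{\! H}(W)$, using only the equality $\tfrac{1}{\vol(A_n)}\card(\vL\cap A_n)\to\dens(\cL)\,\theta^{}_{\! H}(W)$ from $(2)$. Applying this to the bounded continuous characters $g=\langle u^{\star},\,\cdot\,\rangle$ computes the Fourier--Bohr coefficients exactly: for $u\in L^{0}$ one gets $a^{}_{u}=\dens(\cL)\,\widehat{1^{}_{W}}(-u^{\star})$, the point being that a character is continuous, which sidesteps the second, discontinuous window factor that blocked the direct coefficient estimate. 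Now $\widehat{\gamma}(\{u\})=\lvert a^{}_{u}\rvert^{2}$, while Theorem~\ref{thm:diffraction-formula} applied to the positive definite function $c^{}_{W}$ (continuous and compactly supported by Fact~\ref{fact:cont}) supplies the Poisson--Parseval identity $\dens(\cL)^{2}\sum_{u\in L^{0}}\lvert\widehat{1^{}_{W}}(-u^{\star})\rvert^{2}=\dens(\cL)\,\theta^{}_{\! H}(W)=\gamma(\{0\})$. Feeding $a^{}_{u}=\dens(\cL)\,\widehat{1^{}_{W}}(-u^{\star})$ into the standard Bessel inequality $\sum_{k}\lvert a^{}_{k}\rvert^{2}\leqslant\gamma(\{0\})$ then forces equality: there are no Fourier--Bohr coefficients off $L^{0}$ and the diffraction is pure point, so $\widehat{\gamma}=\dens(\cL)\,\widehat{\omega^{}_{c^{}_{W}}}$ and hence $\gamma=\dens(\cL)\,\omega^{}_{c^{}_{W}}$. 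As this holds for \emph{every} accumulation point, the autocorrelation exists and equals $\dens(\cL)\,\omega^{}_{c^{}_{W}}$.

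Finally, for $(3)\Rightarrow(2)$ I would read the density off the autocorrelation at $0$. Since $\vL=\oplam(W)$ is uniformly discrete for relatively compact $W$, the difference set $\vL-\vL$ meets a small ball around $0$ only in $\{0\}$, so testing $\gamma^{(n)}_{\omega}$ against a bump supported in that ball isolates the diagonal and yields $\dens(\vL)=\gamma(\{0\})=\dens(\cL)\,c^{}_{W}(0)=\dens(\cL)\,\theta^{}_{\! H}(W)$, which is $(2)$. The single genuine difficulty throughout is the positive-measure boundary of the window; everything else is bookkeeping, and the boundary is defeated only by the Fourier--Bohr computation combined with the saturation of Bessel's inequality.
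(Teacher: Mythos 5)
Your reduction $(1)\Leftrightarrow(2)$, the direction $(3)\Rightarrow(2)$ via $\gamma(\{0\})=\dens(\vL)$, the upper bound $\gamma\leqslant\dens(\cL)\,\omega^{}_{c^{}_{W}}$ for every accumulation point, and the computation of the Fourier--Bohr coefficients $a^{}_{u}=\dens(\cL)\,\widehat{1^{}_{W}}(-u^{\star})$ from maximality are all sound (the last is essentially Proposition~\ref{prop:amplitudes} of the paper). But the step that is supposed to finish $(2)\Rightarrow(3)$ rests on two false statements. The identity $\dens(\cL)^{2}\sum_{u\in L^{0}}\lvert\widehat{1^{}_{W}}(-u^{\star})\rvert^{2}=\dens(\cL)\,\theta^{}_{\! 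H}(W)$ does not follow from Theorem~\ref{thm:diffraction-formula} and is wrong in general: it asserts that the \emph{total pure-point mass} of $\widehat{\gamma}$ equals $\gamma(\{0\})=\dens(\vL)$, which already fails for $\vL=\ZZ\subset\RR=G$ with trivial $H$, where $\widehat{\gamma}=\delta^{}_{\ZZ}$ has infinite total point mass while $\gamma(\{0\})=1$; for a Euclidean CPS with $H=\RR^{m}$ the sum over $L^{0}$ likewise diverges, since infinitely many $u\in L^{0}$ have $u^{\star}$ in any fixed bounded region. For the same reason there is no Bessel inequality $\sum_{k}\lvert a^{}_{k}\rvert^{2}\leqslant\gamma(\{0\})$ with this normalisation (again $\vL=\ZZ$ gives $a^{}_{k}=1$ for all $k$). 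Plancherel on $H$ gives $\int_{\widehat{H}}\lvert\widehat{1^{}_{W}}\rvert^{2}=\theta^{}_{\! H}(W)$, an \emph{integral} over $\widehat{H}$, which coincides with the sum over $L^{0}$ essentially only when $H$ is compact with normalised Haar measure --- the visible-points situation, not the general one. Without this saturation you cannot exclude a continuous component of $\widehat{\gamma}$, nor upgrade the inequality $\gamma\leqslant\dens(\cL)\,\omega^{}_{c^{}_{W}}$ with equality at $0$ to equality everywhere, so the crux of $(2)\Rightarrow(3)$ is missing.

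Ironically, the route you dismiss --- squeezing $1^{}_{W}$ from above by continuous $g^{}_{\alpha}\in C_{\mathsf{c}}(H)$ --- is exactly the paper's proof, and it does not leak boundary mass: maximality of the density is precisely the statement that nothing leaks. Since $\omega^{}_{g^{}_{\alpha}}-\delta^{}_{\! \vL}\geqslant 0$ is a positive measure, closeness of the total masses $\omega^{}_{g^{}_{\alpha}}(A_{n})$ and $\delta^{}_{\! \vL}(A_{n})$ (guaranteed by $\theta^{}_{\! H}(g^{}_{\alpha})\downarrow\theta^{}_{\! H}(W)$ together with $\dens(\vL)=\dens(\cL)\,\theta^{}_{\! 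H}(W)$) is the same as a total-variation bound $\|(\omega^{}_{g^{}_{\alpha}}-\delta^{}_{\! \vL})|^{}_{A_{n}}\|=o(\vol(A_{n}))$. Fact~\ref{fact:bounds} converts this into a bound on the difference of the autocorrelation approximants of $\delta^{}_{\! \vL}$ and of $\omega^{}_{g^{}_{\alpha}}$, and the latter comb has known autocorrelation $\dens(\cL)\,\omega^{}_{g^{}_{\alpha}\nts*\widetilde{g^{}_{\alpha}}}$, which converges to $\dens(\cL)\,\omega^{}_{c^{}_{W}}$. This purely real-space $\varepsilon$-argument is what should replace your Fourier step.
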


\begin{proof}
  The equivalence of statements (1) and (2) follows from
  Fact~\ref{fact:dens-bound}.  If $\gamma^{}_{\! \vL}$ exists relative
  to $\cA$, we know that also $\dens (\vL)$ exists relative to $\cA$,
  because $\gamma^{}_{\! \vL} (\{ 0 \}) = \dens (\vL) $. Now, if
  $\gamma^{}_{\! \vL} = \dens (\cL)\, \omega^{}_{c^{}_{W}}$, one
  obtains via Eq.~\eqref{eq:cov-at-0} that
\[
   \frac{\dens (\vL)}{\dens (\cL)} \, = \,
   \frac{\gamma^{}_{\! \vL} \bigl( \{ 0 \} \bigr)}{\dens (\cL)}
   \, = \, \omega^{}_{c^{}_{W}}  \bigl( \{ 0 \} \bigr)
   \, = \, \bigl( 1^{}_{W} * \widetilde{1^{}_{W}}\bigr) (0)
   \, = \, \theta^{}_{\! H} (W) \ts ,
\]
which establishes the implication $(3) \Rightarrow (2)$.

For the converse direction, assume the existence of $\dens (\vL)$
relative to the averaging sequence $\cA$.  By
\cite[Thm.~8.13]{HR-Book}, we know that all LCAGs are normal, so we
have Urysohn's lemma at our disposal, which we now employ for an
approximation argument as follows. Here, the compactness of $W$
implies the existence of compact set $K_{g} \subseteq H$ with
$W\subset K^{\circ}_{g}$ together with a $[0,1]$-valued continuous
function $g$ with $\supp (g)\subseteq K_{g}$ that is $1$ on
$W$. Indeed, employing the regularity of the Haar measure
$\theta^{}_{\! H}$, there is even a net of $[0,1]$-valued functions
$g^{}_{\alpha} \in C_{\mathsf{c}} (H)$ with $W\subseteq \supp
(g^{}_{\alpha}) \subseteq K_{g}$, all with the same $K_{g}$, such that
$1^{}_{K_{g}} \geqslant g^{}_{\alpha} \geqslant 1^{}_{W}$ holds for
all $\alpha$ together with $\lim_{\alpha} \theta^{}_{\! H} (
g^{}_{\alpha} ) = \theta^{}_{\! H} (W)$.

Let us first observe that the choice of $(g^{}_{\alpha})$ implies
$\lim_{\alpha} \bigl(g^{}_{\alpha}\nts * \widetilde{g^{}_{\alpha}}\ts
\bigr) = 1^{}_{W} * \widetilde{1^{}_{W}}$ in $C_{\mathsf{c}} (H)$,
because, employing $\| f*h\|^{}_{\infty} \leqslant \| f\|^{}_{1} \,
\|h\|^{}_{\infty}$ with $\|1^{}_{W}\|^{}_{\infty} = \| g^{}_{\alpha}
\|^{}_{\infty} = 1$ and $ \|h\|^{}_{1} = \| \widetilde{h} \|^{}_{1} $,
one finds the estimate
\begin{equation}\label{eq:cov-estimate}
   \big\| 1^{}_{W} * \widetilde{1^{}_{W}} -
   g^{}_{\alpha}\nts\nts * \widetilde{g^{}_{\alpha}} \big\|^{}_{\infty}
    \ts \leqslant \, \big\| 1^{}_{W} * \bigl(\widetilde{1^{}_{W}} -
    \widetilde{g^{}_{\alpha}}\ts \bigr) \big\|^{}_{\infty} + \ts
   \big\|  \widetilde{g^{}_{\alpha}} *
   \bigl( 1^{}_{W} - g^{}_{\alpha} \bigr) \big\|^{}_{\infty} 
   \ts \leqslant \, 2\, \| 1^{}_{W} - g^{}_{\alpha} \|^{}_{1} \ts .
\end{equation}
Now, consider the weighted Dirac combs $\omega^{}_{\nts
  g^{}_{\alpha}\nts * \widetilde{g^{}_{\alpha}}}$, which are positive
and positive definite pure point measures, all supported within the
common model set $\oplam (K_{g}-K_{g})$, which is thus a Meyer set as
well. Since also $\supp \bigl(\omega^{}_{1^{}_{W} *
  \widetilde{1^{}_{W}}}\bigr) \subseteq \oplam (K_{g}-K_{g})$, it
follows that
\[
     \omega^{}_{\nts g^{}_{\alpha}\nts * \widetilde{g^{}_{\alpha}}}
     \;\, \xrightarrow{\quad} \;\, 
     \omega^{}_{1^{}_{W} * \widetilde{1^{}_{W}}}
     \, = \, \omega^{}_{c^{}_{W}}
\]
pointwise at each $x\in \oplam (K_{g}-K_{g})$, and hence also in the
vague topology.

To simplify the exposition, let us assume for the remainder of this
proof that $\dens (\cL) =1$, which means no restriction as we could
rescale $\theta^{}_{\! H}$ accordingly.  Let $\varepsilon > 0$ and
choose some $f\in C_{\mathsf{c}} (G)$, so $\supp (f) \subseteq K$ for
some compact set $K\subseteq G$. Since the measures
$\omega^{}_{g^{}_{\alpha}}$ and $\delta^{}_{\! \vL}$ are
equi-translation bounded, there is constant $C$ such that
\begin{equation}\label{eq:bounds}
   \|\ts \omega^{}_{g^{}_{\alpha}} \|^{}_{K} \, < \, C
   \quad \text{and} \quad
   \| \delta^{}_{\! \vL} \|^{}_{K} \, < \, C \ts .
\end{equation}
Moreover, there is some index $M$ such that $ \theta^{}_{\! H } (
g^{}_{\alpha} ) \leqslant \theta^{}_{\! H} (W) + \frac{\varepsilon} {1
  + \ts C \| f \|^{}_{\infty}} $ holds for all $\alpha \succcurlyeq
M$. We also know, possibly after adjusting the index $M$, that
\[
   \bigl|  \omega^{}_{c^{}_{W}} (f)
  -  \omega^{}_{g^{}_{\alpha} \nts *\ts \widetilde{g^{}_{\alpha}} } (f)
   \bigr| \, < \, \varepsilon
\]
holds for all $\alpha \succcurlyeq M$.  With $\vL_{n} := \vL\cap
A_{n}$, we now employ an $8\ts\varepsilon$-argument to establish the
convergence of the autocorrelation sequence $ \bigl( \frac{1}{\vol
  (A_n)}\, \delta^{}_{\!\vL_{n}} \! *
\widetilde{\delta^{}_{\!\vL_{n}}} \bigr)_{n\in\NN}$ as follows.

Let $\alpha \succcurlyeq M$ be fixed. With the abbreviation
$\omega^{}_{\alpha, n} = \omega^{}_{g^{}_{\alpha}}\nts
\big|^{}_{A_n}$, we have
\[
   \omega^{}_{g^{}_{\alpha} * \ts \widetilde{g^{}_{\alpha}}}
   \, = \, \lim_{n\to\infty} \frac{\omega^{}_{\alpha,n}\nts
   *\widetilde{\omega^{}_{\alpha,n}}}{\vol (A_n)}
\]
as a consequence of the results of \cite{RS}, see also \cite{LR} for
an alternative proof. Consequently, there is some integer $N_{1}$ so
that, for all $n\geqslant N_{1}$, we have
\[
   \biggl|\omega^{}_{g^{}_{\alpha} * \ts \widetilde{g^{}_{\alpha}}} (f)
   - \frac{\bigl(\omega^{}_{\alpha,n}\nts\nts
   *\widetilde{\omega^{}_{\alpha,n}}\bigr) (f) }{\vol (A_n)} 
   \biggr| \, < \, \varepsilon \ts .
\]

Next, recall that $\omega^{}_{g^{}_{\alpha}}$ is a norm-almost
periodic measure \cite{BM,RS}, so it is amenable with the mean being
given by $\lim_{n\to\infty} \frac{1}{\vol (A_{n})}\,
\omega^{}_{g^{}_{\alpha}}\! (A_{n}) = \theta^{}_{\! H}
(g^{}_{\alpha})$.  There is thus some $N_{2}$ so that $ \theta^{}_{\!
  H} (g^{}_{\alpha}) \geqslant \frac{1}{\vol (A_{n})}\,
\omega^{}_{g^{}_{\alpha}}\! (A_{n}) - \frac{\varepsilon} {1+\ts C \| f
  \|^{}_{\infty}}$ holds for all $n > N_{2}$. Finally, due to our
density condition for $\vL = \oplam (W)$, there exists some $N_{3}$
such that $\frac{1}{\vol (A_{n})}\, \delta^{}_{\! \vL} (A_{n}) >
\theta^{}_{\! H} (W) - \frac{\varepsilon} {1+\ts C \| f
  \|^{}_{\infty}}$ is satisfied for all $n > N_{3}$.

Now, if $n > \max \{ N_{1}, N_{2}, N_{3} \}$, we have
\[
   \frac{\delta^{}_{\! \vL} (A_{n}) }{\vol (A_{n})}
   \, > \, \theta^{}_{\! H} (W) -  \frac{\varepsilon} 
    {1+\ts C \| f \|^{}_{\infty}} \, \geqslant \,
   \theta^{}_{\! H} (g^{}_{\alpha}) -  \frac{2 \ts \varepsilon} 
    {1+\ts C \| f \|^{}_{\infty}} \, \geqslant \,
   \frac{\omega^{}_{g^{}_{\alpha}}\! (A_{n})}{\vol (A_{n})}
   - \frac{3 \ts \varepsilon}  {1+\ts C \| f \|^{}_{\infty}} 
\]
and hence $\omega^{}_{g^{}_{\alpha}}\! (A_{n}) - \delta^{}_{\! \vL}
(A_{n}) \, < \, \frac{3 \ts \varepsilon\, \vol (A_{n})}{1+\ts
  C \| f \|^{}_{\infty}}$.  As $\omega^{}_{g^{}_{\alpha}} \geqslant
\delta^{}_{\! \vL}$, the finite measure
$\frac{1}{\vol (A_{n})} \bigl( \omega^{}_{\alpha,n}
-\delta^{}_{\! \vL_{n}}\bigr)$ is positive. Consequently,
\[
   \big\| \ts \omega^{}_{\alpha,n} -\delta^{}_{\! \vL_{n}}
   \big \| \, = \,
   \bigl(\omega^{}_{\alpha,n} -\delta^{}_{\! \vL_{n}}\bigr) (G)
   \, = \, \bigl(\omega^{}_{g^{}_{\alpha}}\! -
   \delta^{}_{\! \vL}\bigr) (A_{n}) \, < \,
   \frac{3 \ts \varepsilon\, \vol (A_{n})}
    {1+\ts C \| f \|^{}_{\infty}} \ts .
\]
Clearly, this also implies $\big\| \ts \widetilde{\omega^{}_{\alpha,n}}
-\widetilde{\delta^{}_{\! \vL_{n}}} \big \| < \frac{3 \ts
  \varepsilon\, \vol (A_{n})} {1+\ts C \| f \|^{}_{\infty}}$.  Put
together, we can now estimate
\[
\begin{split}
  \biggl| \omega^{}_{c^{}_{W}} (f) - 
  \frac{ \bigl( \delta^{}_{\! \vL_{n}} *
  \widetilde{\delta^{}_{\! \vL_{n}}} \bigr) (f)}
   {\vol (A_{n})} \biggr| 
  \, & \leqslant \, \bigl|  \omega^{}_{c^{}_{W}} (f)
  -  \omega^{}_{g^{}_{\alpha} \nts *\ts \widetilde{g^{}_{\alpha}} } (f)
   \bigr| + \biggl| \ts
  \omega^{}_{g^{}_{\alpha} \nts *\ts \widetilde{g^{}_{\alpha}} } (f) -
  \frac{ \bigl( \omega^{}_{\alpha,n} *
  \widetilde{\omega^{}_{\alpha,n}} \bigr) (f)}
  {\vol (A_{n})} \biggr|  \\[2mm]
 & \qquad\qquad + \biggl|  \frac{ \bigl( \omega^{}_{\alpha,n} *
  \widetilde{\omega^{}_{\alpha,n}} \bigr) (f)}
  {\vol (A_{n})} -
   \frac{ \bigl( \delta^{}_{\! \vL_{n}} *
  \widetilde{\delta^{}_{\! \vL_{n}}} \bigr) (f)}
   {\vol (A_{n})} \biggr| \\[2mm]
& < \,  \biggl|  \frac{ \bigl( \omega^{}_{\alpha,n} *
  \widetilde{\omega^{}_{\alpha,n}} \bigr) (f)}
  {\vol (A_{n})} -
   \frac{ \bigl( \delta^{}_{\! \vL_{n}} *
  \widetilde{\delta^{}_{\! \vL_{n}}} \bigr) (f)}
   {\vol (A_{n})} \biggr| + 2\ts \varepsilon \\[2mm]
& \leqslant \, \biggl| \frac{ \bigl( (\omega^{}_{\alpha,n} -
  \delta^{}_{\! \vL_{n}}) * \widetilde{\omega^{}_{\alpha,n}} \bigr) (f)}
  {\vol (A_{n})} \biggr| + \biggl|
   \frac{ \bigl( \delta^{}_{\! \vL_{n}} *
  \bigl( \widetilde{\omega^{}_{\alpha,n}} -
   \widetilde{\delta^{}_{\! \vL_{n}}} \bigr) \bigr) (f)}
  {\vol (A_{n})} \biggr| + 2\ts \varepsilon \\[2mm]
& \leqslant \,  \bigg\| \frac{ \omega^{}_{\alpha,n} -
  \delta^{}_{\! \vL_{n}} } {\vol (A_{n})} \bigg\| \,
  \big\| \widetilde{\omega^{}_{\alpha,n}} \big\|_{K} 
  \, \| f \|^{}_{\infty} + 
  \big\| \delta^{}_{\! \vL_{n}} \big\|_{K}  \,
  \bigg\| \frac{  \widetilde{\omega^{}_{\alpha,n}} -
  \widetilde{\delta^{}_{\! \vL_{n}}} } 
  {\vol (A_{n})} \bigg\| \,
  \, \| f \|^{}_{\infty} + 2\ts \varepsilon \\[2mm]
& \leqslant \, \frac{6\ts \varepsilon}{1 + \ts C \|f\|^{}_{\infty}}
 \, C \|f\|^{}_{\infty} + 2\ts \varepsilon
  \, < \, 8\ts \varepsilon \ts ,
\end{split}
\]
where the second last line follows from Fact~\ref{fact:bounds} and the
last from Eq.~\eqref{eq:bounds}. This completes our argument.
\end{proof}

\begin{remark}
  Let us note that the estimate in Eq.~\eqref{eq:cov-estimate} implies
  the uniform convergence of the net $(g^{}_{\alpha}\nts\nts *
  \widetilde{g^{}_{\alpha}})$ of continuous functions to the limit
  $c^{}_{W}$.  The latter must then be contiunous, too, in line with
  Fact~\ref{fact:cont}. Since this type of argument can also be used
  for measurable sets $W$ that are merely relatively compact, one sees
  the continuity of $c^{}_{W}$ in an alternative way.  \exend
\end{remark}

\begin{remark}\label{rem:mixed}
  In the setting of Proposition~\ref{prop:dens-versus-gamma}, let
  $\vL^{}_{1} = \oplam (W^{}_{\! 1})$ and $\vL^{}_{2} = \oplam
  (W^{}_{\! 2})$ be two weak model sets with compact windows such
  that, for the same van Hove sequence $\cA$, we have
\[
     \dens (\vL_{i}) \, = \, \dens (\cL) \,
     \theta^{}_{\! H} (W^{}_{\! i}) \ts ,
    \quad \text{for } i\in \{1,2\} \ts .
\]
Then, in complete analogy to the proof of
Proposition~\ref{prop:dens-versus-gamma}, one can show that
\[
  \lim_{n\to\infty} \frac{\delta^{}_{\vL^{}_{1} \cap\ts A^{}_{n}} 
  \! * \widetilde{\delta^{}_{\vL^{}_{2} \cap\ts A^{}_{n}}}}
   {\vol (A_{n})} \, = \, \dens (\cL) \,
   \omega^{}_{1^{}_{W^{}_{\! 1}}\! *\ts
    \widetilde{1^{}_{W^{}_{\! 2}}}}
\]
holds for the mixed correlation (or Eberlein convolution) between
$\vL^{}_{1}$ and $\vL^{}_{2}$.  \exend
\end{remark}

Let us extend our previous result to more general windows, namely to
relatively compact sets $W\subset H$ with $\theta^{}_{\! H}
(\,\overline{\! W\!}\,) >0$.  One then finds the following result.

\begin{coro}\label{coro:dens-versus-gamma}
  Consider the setting of Proposition~$\ref{prop:dens-versus-gamma}$,
  with van Hove averaging sequence\/ $\cA$ and\/ $\vL = \oplam (W)$,
  but only assume\/ $W \subset H$ to be relatively compact. If\/
  $\underline{\dens} (\vL) = \dens (\cL)\, \theta^{}_{\! H}
  (\,\overline{\! W \! } \, )$, or equivalently\/ $\dens (\vL) = \dens
  (\cL)\, \theta^{}_{\! H} (\,\overline{\! W \! } \, )$, one also
  has\/ $\gamma^{}_{\! \vL} = \dens (\cL) \,
  \omega^{}_{c^{}_{\,\overline{\! W \!}}}$, where\/
  $c^{}_{\,\overline{\! W \!}}$ is the covariogram function of\/
  $1^{}_{\,\overline{\! W \!}\,}$.
\end{coro}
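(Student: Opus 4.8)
The plan is to reduce everything to the compact-window situation already handled in Proposition~\ref{prop:dens-versus-gamma} by passing to the closure of the window. Set $\vL' := \oplam(\,\overline{\! W\!}\,)$, a weak model set with the \emph{compact} window $\overline{W}$ that satisfies $\vL \subseteq \vL'$. First I would record that the two forms of the hypothesis are equivalent: since Fact~\ref{fact:dens-bound} gives $\overline{\dens}(\vL) \leqslant \dens(\cL)\,\theta^{}_{\! H}(\,\overline{\! W\!}\,)$ unconditionally, the lower density can only reach this value if $\dens(\vL)$ exists and equals it, exactly as in the implication $(1)\Leftrightarrow(2)$ of Proposition~\ref{prop:dens-versus-gamma}. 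Next I would transfer the extremality to $\vL'$. Applying Fact~\ref{fact:dens-bound} to $\vL'$ with its compact window yields $\overline{\dens}(\vL') \leqslant \dens(\cL)\,\theta^{}_{\! H}(\,\overline{\! W\!}\,)$, while the inclusion $\vL\subseteq\vL'$ forces $\underline{\dens}(\vL') \geqslant \underline{\dens}(\vL) = \dens(\cL)\,\theta^{}_{\! H}(\,\overline{\! W\!}\,)$. Sandwiching shows $\dens(\vL')$ exists and equals $\dens(\cL)\,\theta^{}_{\! H}(\,\overline{\! W\!}\,)$, so statement~(2) of Proposition~\ref{prop:dens-versus-gamma} holds for $\vL'$ and therefore $\gamma^{}_{\! \vL'} = \dens(\cL)\,\omega^{}_{c^{}_{\,\overline{\! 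W \!}}}$. This is precisely the target measure, so it remains to prove $\gamma^{}_{\! \vL} = \gamma^{}_{\! \vL'}$.

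The key observation is that the difference $D := \vL'\setminus\vL = \oplam(\,\overline{\! W\!}\,\setminus W)$ has vanishing density. Indeed, from the disjoint decomposition $\vL'\cap A_n = (\vL\cap A_n)\sqcup(D\cap A_n)$ and the fact that $\vL$ and $\vL'$ share the same density one obtains
\[
   \frac{\card(D\cap A_n)}{\vol(A_n)} \, = \,
   \frac{\card(\vL'\cap A_n)}{\vol(A_n)}
   - \frac{\card(\vL\cap A_n)}{\vol(A_n)}
   \; \xrightarrow{\; n\to\infty \;} \; 0 \ts ,
\]
so $\dens(D)=0$. Moreover, as $\overline{W}$ is compact, $\vL'$ is uniformly discrete, and hence so are $\vL$ and $D$; in particular there is a single constant $C'$ bounding all the relevant $\|\cdot\|^{}_{K}$-norms of $\delta^{}_{\vL\cap A_n}$, $\delta^{}_{D\cap A_n}$ and their twisted versions, uniformly in $n$, for the compact $K\supseteq\supp(f)$ fixed below.

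I would then compare the finite-range autocorrelations directly. Writing $\delta^{}_{\vL'\cap A_n} = \delta^{}_{\vL\cap A_n} + \delta^{}_{D\cap A_n}$ and expanding, the difference $\gamma^{(n)}_{\! \vL'} - \gamma^{(n)}_{\! \vL}$ equals $\tfrac{1}{\vol(A_n)}$ times the sum of the mixed term $\delta^{}_{\vL\cap A_n} * \widetilde{\delta^{}_{D\cap A_n}}$, its twisted partner $\delta^{}_{D\cap A_n} * \widetilde{\delta^{}_{\vL\cap A_n}}$, and the diagonal term $\delta^{}_{D\cap A_n} * \widetilde{\delta^{}_{D\cap A_n}}$. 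Testing against an arbitrary $f\in C_{\mathsf{c}}(G)$ with $\supp(f)\subseteq K$ and applying Fact~\ref{fact:bounds} with the $D$-factor taken as the finite measure, each of these three contributions is bounded by $\tfrac{\card(D\cap A_n)}{\vol(A_n)}\,C'\,\|f\|^{}_{\infty}$, which tends to $0$ since $\dens(D)=0$. Hence $\gamma^{(n)}_{\! \vL}(f) = \gamma^{(n)}_{\! \vL'}(f) + o(1)$ for every such $f$; as $\gamma^{(n)}_{\! \vL'}\to\gamma^{}_{\! \vL'}$ vaguely, the positive definite sequence $\gamma^{(n)}_{\! \vL}$ converges vaguely to the same limit. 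This establishes that $\gamma^{}_{\! \vL}$ exists and equals $\gamma^{}_{\! \vL'} = \dens(\cL)\,\omega^{}_{c^{}_{\,\overline{\! W \!}}}$, as claimed.

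The main obstacle I anticipate is the bookkeeping in the last step, in particular choosing, for each of the three error terms, which convolution factor to treat as the finite measure in Fact~\ref{fact:bounds} so that the vanishing factor $\card(D\cap A_n)/\vol(A_n)$ is produced, and checking that the uniform-discreteness constant $C'$ can be taken independent of $n$. By contrast, the passage to the closure and the density-zero computation for $D$ are comparatively routine once the extremality has been transferred to $\vL'$.
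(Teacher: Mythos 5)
Your proposal is correct and follows essentially the same route as the paper: pass to the closed window $\overline{W}$, use Fact~\ref{fact:dens-bound} and the inclusion $\vL\subseteq\oplam(\,\overline{\! W\!}\,)$ to transfer the maximal-density property to $\oplam(\,\overline{\! W\!}\,)$, observe that the difference set has zero density, and then invoke Proposition~\ref{prop:dens-versus-gamma}. The only difference is that you spell out (correctly, via the three cross/diagonal terms and Fact~\ref{fact:bounds}) the step that two uniformly discrete point sets differing by a zero-density set share the same autocorrelation, which the paper states without detail.
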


\begin{proof}
  The two conditions are equivalent by Fact~\ref{fact:dens-bound};
  compare the proof of Proposition~\ref{prop:dens-versus-gamma}.  With
  respect to $\cA$, we find via Eq.~\eqref{eq:lower-upper-dens} that
\[
    \dens (\cL) \, \theta^{}_{\! H} (\,\overline{\! W\!} \, )
     \, = \, \dens (\vL)
    \, \leqslant \, \underline{\dens}
     \bigl( \oplam (\, \overline{\! W \!} \,) \bigr)
    \, \leqslant \, \overline{\dens} \bigl( 
    \oplam (\, \overline{\! W \!} \,) \bigr)  
    \, \leqslant \, \dens (\cL) \, \theta^{}_{\! H}
    (\, \overline{\! W \!}\,) \ts .
\]
This implies $\underline{\dens} \bigl( \oplam (\, \overline{\! W \!}
\,) \bigr) = \overline{\dens} \bigl( \oplam (\, \overline{\! W \!} \,)
\bigr)$ and thus the existence of the limit
\[
      \dens \bigl( \oplam (\, \overline{\! W \!} \,) \bigr) \, = 
      \lim_{n\to\infty} \frac{\card \bigl(\oplam(\, \overline{\! W \!} \,) 
           \cap A_{n} \bigr)}{\vol (A_{n})}  \, = \,
       \dens (\cL) \, \theta^{}_{\! H} (\, \overline{\! W \! }\,) \ts .    
\]
As $\vL \subseteq \oplam (\, \overline{\! W \!} \,)$, the argument
also implies that the point set $\oplam (\, \overline{\! W \!}\,)
\setminus \vL$ has zero density. Consequently, due to the inclusion
relation, $\vL = \oplam (W)$ and $\oplam (\, \overline{\! W\!} \,)$
possess the same autocorrelation measure relative to $\cA$, so
$\gamma^{}_{\! \vL} = \gamma^{}_{\! \mbox{\normalsize $\curlywedge$}
  (\, \overline{\! W \!}\,)}$.  The claim now follows from
Proposition~\ref{prop:dens-versus-gamma}.
\end{proof}

Our derivation so far motivates the following concept.

\begin{definition}\label{def:one}
  For a given CPS $(G,H,\cL)$ with $G$ $\sigma$-compact and $H$
  compactly generated, a projection set $\oplam (W)$ is called a
  \emph{weak model set of maximal density} relative to a given van
  Hove averaging sequence $\cA$ if the window $W\subseteq H$ is
  relatively compact with $\theta^{}_{\! H} (\, \overline{\! W \!} \,)
  > 0$, if the density of $\oplam (W)$ relative to $\cA$ exists, and
  if the density condition $\dens (\oplam (W)) = \dens (\cL) \,
  \theta^{}_{\!  H} (\, \overline{\! W \!}  \,)$ is satisfied.
\end{definition}

Note that, in view of Fact~\ref{fact:dens-bound}, the two conditions
on the density can be replaced by the single maximality condition
$\underline{\dens} (\oplam (W)) = \dens (\cL) \, \theta^{}_{\!  H} (\,
\overline{\! W \!}  \,)$, which is equivalent.  If $\, \overline{\! W
  \!} \,$ has zero measure in $H$, the corresponding projection set
would have upper density zero, and is not of interest in our setting.
Thus, we formulate our general diffraction result as follows.

\begin{theorem}\label{thm:weak-pp}
  Let\/ $\vL = \oplam (W)$, with\/ $W\subseteq H$ compact and\/
  $\theta^{}_{\! H} (W) >0$, be a weak model set of maximal density
  for the CPS\/ $(G,H,\cL)$, in the setting of
  Proposition~$\ref{prop:dens-versus-gamma}$.  Then, the
  autocorrelation\/ $\gamma^{}_{\! \vL}$ is a strongly almost periodic
  pure point measure. It is Fourier transformable, and\/
  $\widehat{\ts\gamma^{}_{\! \vL}\ts}$ is a translation bounded,
  positive, pure point measure on\/ $\widehat{G}$. It is explicitly
  given by
\[
     \widehat{\ts\gamma^{}_{\! \vL}\ts} \, = 
     \sum_{u\in L^{0}} \big\lvert a (u) \big\rvert^{2}\, 
     \delta^{}_{u} \, , \quad \text{with amplitude  } 
     a (u) \, = \, \frac{\dens (\vL)}{\theta^{}_{\! H} (W)}\, 
     \widehat{1^{}_{W}} (-u^{\star}) \ts ,
\]
where\/ $\widehat {1^{}_{W}}$ is a bounded, continuous function on the
dual group\/ $\widehat{H}$ and\/ $L^{0} = \pi (\cL^{\ts 0}) \subset
\widehat{G}$ is the corresponding Fourier module in additive notation.

More generally, if\/ $W\!$ is relatively compact with\/ $\theta^{}_{\!
  H} (W) >0$, but\/ $\dens (\vL) = \dens (\cL)\, \theta^{}_{\! H}
(\,\overline{\! W \! } \, )$ as in
Corollary~$\ref{coro:dens-versus-gamma}$, the previous formula holds
with\/ $W\!$ replaced by\/ $\,\overline{\! W \! }$.
\end{theorem}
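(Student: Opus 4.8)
The plan is to reduce the whole statement to the diffraction formula of Theorem~\ref{thm:diffraction-formula}, fed with the covariogram as its weight function. The analytic heart has in fact already been dealt with: by Proposition~\ref{prop:dens-versus-gamma}, the maximal density hypothesis gives $\gamma^{}_{\! \vL} = \dens(\cL)\,\omega^{}_{c^{}_{W}}$ with covariogram $c^{}_{W} = 1^{}_{W} * \widetilde{1^{}_{W}}$. First I would verify that $c^{}_{W}$ satisfies the hypotheses of Theorem~\ref{thm:diffraction-formula}. Since $W$ is compact, $c^{}_{W}$ is supported in the compact set $W - W$, and by Fact~\ref{fact:cont} it is continuous, so $c^{}_{W} \in C_{\mathsf{c}}(H)$; being of the form $f * \widetilde{f}$ with $f = 1^{}_{W}$, it is moreover positive definite.

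With this in place, Theorem~\ref{thm:diffraction-formula} applied to $c = c^{}_{W}$ immediately delivers that $\omega^{}_{c^{}_{W}}$, and hence $\gamma^{}_{\! \vL}$, is a translation bounded, Fourier transformable pure point measure, and that
\[
   \widehat{\ts\gamma^{}_{\! \vL}\ts} \, = \, \dens(\cL)\,\widehat{\omega^{}_{c^{}_{W}}}
   \, = \, \dens(\cL)^{2} \sum_{u\in L^{0}} \widehat{c^{}_{W}}(-u^{\star})\,\delta^{}_{u}
\]
is a translation bounded, positive pure point measure on $\widehat{G}$.

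It remains to identify $\widehat{c^{}_{W}}$ and to fix the normalisation. By the convolution theorem together with $\widehat{\widetilde{f}} = \overline{\widehat{f}}$, one obtains $\widehat{c^{}_{W}} = \widehat{1^{}_{W}}\,\overline{\widehat{1^{}_{W}}} = \lvert \widehat{1^{}_{W}} \rvert^{2}$, whence $\widehat{c^{}_{W}}(-u^{\star}) = \lvert \widehat{1^{}_{W}}(-u^{\star}) \rvert^{2} \geqslant 0$, which re-confirms positivity; here $\widehat{1^{}_{W}}$ is bounded and continuous because $1^{}_{W} \in L^{1}(H)$. Substituting the maximal density relation $\dens(\cL) = \dens(\vL)/\theta^{}_{\! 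H}(W)$ turns the prefactor $\dens(\cL)^{2}$ into $(\dens(\vL)/\theta^{}_{\! H}(W))^{2}$, which is exactly $\lvert a(u) \rvert^{2}$ for the stated amplitude. For the strong almost periodicity I would invoke that, as $c^{}_{W} \in C_{\mathsf{c}}(H)$, the comb $\omega^{}_{c^{}_{W}}$ is norm-almost periodic by \cite{BM,RS} (exactly as already used in the proof of Proposition~\ref{prop:dens-versus-gamma}), and norm-almost periodicity entails strong almost periodicity, which passes to the scalar multiple $\gamma^{}_{\! \vL}$.

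For the relatively compact case, I would simply replace the appeal to Proposition~\ref{prop:dens-versus-gamma} by Corollary~\ref{coro:dens-versus-gamma}, which yields $\gamma^{}_{\! \vL} = \dens(\cL)\,\omega^{}_{c^{}_{\overline{W}}}$; since $\overline{W}$ is compact, the argument above applies verbatim with $W$ replaced by $\overline{W}$, the density relation now reading $\dens(\cL) = \dens(\vL)/\theta^{}_{\! H}(\overline{W})$. The only points requiring genuine care within this theorem are the verification of the hypotheses of the diffraction formula and the clean identity $\widehat{c^{}_{W}} = \lvert \widehat{1^{}_{W}} \rvert^{2}$; the truly hard analytic step, namely the convergence of the autocorrelation itself, has already been absorbed into Proposition~\ref{prop:dens-versus-gamma}.
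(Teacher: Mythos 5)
Your proposal is correct and follows essentially the same route as the paper: reduce to $\gamma^{}_{\! \vL} = \dens(\cL)\,\omega^{}_{c^{}_{W}}$ via Proposition~\ref{prop:dens-versus-gamma} (resp.\ Corollary~\ref{coro:dens-versus-gamma} for relatively compact $W$), check that $c^{}_{W}$ is continuous, compactly supported and positive definite, and then invoke Theorem~\ref{thm:diffraction-formula}. You merely spell out the identification $\widehat{c^{}_{W}} = \lvert\widehat{1^{}_{W}}\rvert^{2}$, the normalisation $\dens(\cL)=\dens(\vL)/\theta^{}_{\! H}(W)$, and the almost periodicity claim, all of which the paper leaves implicit; these details are correct.
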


\begin{proof}
  Note that, under our assumptions, we have $\dens (\cL) = \dens
  (\vL)\, \theta^{}_{\! H} (\,\overline{\! W \!} \,)$ in both
  cases. If $W$ is compact, the density assumption gives us
  $\gamma^{}_{\! \vL} = \dens (\cL) \,\omega^{}_{c^{}_{W}}$ by an
  application of Proposition~\ref{prop:dens-versus-gamma}. Since
  $c^{}_{W} = 1^{}_{W} * \widetilde{1^{}_{W}}$ is continuous (by
  Fact~\ref{fact:cont}), compactly supported (because $\supp
  (c^{}_{W}) \subseteq W\!\nts\nts-\! W$) and positive definite by
  construction, we can invoke Theorem~\ref{thm:diffraction-formula},
  which proves the claim on $\widehat{\ts\gamma^{}_{\! \vL}\ts}$.

  The extension to a relatively compact window $W\!$, under our
  density assumption, is a consequence of
  Corollary~\ref{coro:dens-versus-gamma}.
\end{proof}

\begin{remark}\label{rem:regular}
  A regular model set with proper window $W$ is automatically a weak
  model set of maximal density, and thus a special case of
  Theorem~\ref{thm:weak-pp}. Also, if $W$ is regular, but fails to be
  proper, we still get a special case when $\theta^{}_{\! H}
  (W^{\circ}) = \theta^{}_{\! H} (\, \overline{\! W \!} \,)$ is
  satisfied.  \exend
\end{remark}

Let us note in passing that the explicit formula for the diffraction
measure is nice and systematic, and a direct generalisation of the
known formula for regular model sets; compare \cite[Thm.~9.4]{TAO}.
In particular, we still have a well-defined meaning of the intensities
in terms of an amplitude (or Fourier--Bohr coefficient), which will be
useful for calculations with superpositions of Dirac combs.  However,
the result is also a bit deceptive in the sense that it will be
difficult to actually calculate the amplitudes $a (u)$ in this
generality --- unless one has an underlying arithmetic structure as in
our guiding example.

In fact, the analogy with the properties of regular model sets goes
further, via the following result on the amplitudes (or Fourier--Bohr
coefficients), which resembles the original result in \cite{Hof}, but
cannot be proved by the methods of that paper due to the absence of
uniform densities. Recall that, due to our additive notation of the
dual groups, the character $\chi^{}_{u}$ defined by $u\in\widehat{G}$
is written as $\chi^{}_{u} (\cdot) = \langle u, \cdot\rangle$, with
$\langle -u, \cdot\rangle = \overline{\langle u, \cdot\rangle}$.

\begin{prop}\label{prop:amplitudes}
  Under the conditions of Theorem~$\ref{thm:weak-pp}$, the limit
\[
   a^{}_{u} \, := \lim_{n\to\infty} \frac{1}{\vol (A_{n})}
    \sum_{t \in \vL\cap A_{n}} \! \! \overline{\langle u,t \rangle} \, = 
    \lim_{n\to\infty} \frac{1}{\vol (A_{n})}
    \int_{\nts A_{n}}\! \overline{\langle u,t\rangle} 
    \dd \delta^{}_{\! \vL} (t)
\]
  exists for each\/ $u\in\widehat{G}$, and one has\/
\[
    \widehat{\ts \gamma^{}_{\! \vL}\ts} (\{ u \} ) 
    \, = \, \lvert a^{}_{u} \rvert^{2} .
\]
Moreover, $a^{}_{u} = 0$ if\/ $u\notin L^{0}$ with\/ $L^{0}$ from the
dual CPS according to \eqref{eq:dual-candp}, and\/ $a^{}_{u}$ agrees
with the amplitude\/ $a(u)$ of Theorem~$\ref{thm:weak-pp}$ otherwise.
\end{prop}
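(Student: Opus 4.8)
The plan is to establish the existence of the Fourier–Bohr coefficient $a^{}_u$ by relating the weighted exponential sum over $\vL\cap A_n$ to the already-controlled autocorrelation, and then to identify its modulus squared with the diffraction intensity at $u$. First I would observe that the two expressions for $a^{}_u$ in the statement are literally the same, since $\delta^{}_{\!\vL}$ is the sum of unit point masses at the points of $\vL$, so $\int_{A_n}\overline{\langle u,t\rangle}\dd\delta^{}_{\!\vL}(t) = \sum_{t\in\vL\cap A_n}\overline{\langle u,t\rangle}$; there is nothing to prove there. The substantive content is the existence of the limit.

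The key idea is to exploit Theorem~\ref{thm:weak-pp}, which tells us that $\gamma^{}_{\!\vL} = \dens(\cL)\,\omega^{}_{c^{}_W}$ is strongly almost periodic with pure point Fourier transform $\widehat{\gamma^{}_{\!\vL}} = \sum_{u\in L^0}|a(u)|^2\,\delta^{}_u$. For such a measure, the diffraction intensity at a point $u$ is the Fourier–Bohr coefficient of the \emph{autocorrelation}, and by the general theory connecting strongly almost periodic autocorrelations to the underlying measure (the same machinery behind \cite{Hof,BM}), one expects $\widehat{\gamma^{}_{\!\vL}}(\{u\})$ to equal $|a^{}_u|^2$ where $a^{}_u$ is the mean of $\overline{\langle u,\cdot\rangle}$ against $\delta^{}_{\!\vL}$. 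The cleanest route is to approximate $\delta^{}_{\!\vL}$ by the weighted combs $\omega^{}_{g^{}_\alpha}$ from the proof of Proposition~\ref{prop:dens-versus-gamma}: for each $\alpha$, the measure $\omega^{}_{g^{}_\alpha}$ is norm-almost periodic, hence amenable, so the mean $\lim_n \frac{1}{\vol(A_n)}\int_{A_n}\overline{\langle u,t\rangle}\dd\omega^{}_{g^{}_\alpha}(t)$ exists and, by Theorem~\ref{thm:diffraction-formula} applied to $g^{}_\alpha$ (or directly from the uniform distribution of the lifted points), equals $\dens(\cL)\,\widehat{g^{}_\alpha}(-u^\star)$ when $u\in L^0$ and vanishes otherwise. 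One then transfers this limit to $\delta^{}_{\!\vL}$ using the norm estimate $\|\omega^{}_{g^{}_\alpha}\!\mid_{A_n} - \delta^{}_{\!\vL_n}\| < \frac{3\varepsilon\,\vol(A_n)}{1+C\|f\|^{}_\infty}$ already derived there, which bounds the difference of the two exponential sums uniformly in $n$.

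Concretely, I would fix $u\in\widehat{G}$ and run an $\varepsilon$-argument. The triangle inequality splits $\big|\frac{1}{\vol(A_n)}\sum_{t\in\vL\cap A_n}\overline{\langle u,t\rangle} - \dens(\cL)\,\widehat{1^{}_W}(-u^\star)\big|$ into three pieces: the difference between the $\delta^{}_{\!\vL}$-sum and the $\omega^{}_{g^{}_\alpha}$-sum (controlled by the norm bound, since $|\overline{\langle u,t\rangle}|=1$), the difference between the $\omega^{}_{g^{}_\alpha}$-mean at finite $n$ and its limit $\dens(\cL)\,\widehat{g^{}_\alpha}(-u^\star)$ (small for large $n$ by amenability), and the difference $\dens(\cL)|\widehat{g^{}_\alpha}(-u^\star)-\widehat{1^{}_W}(-u^\star)|$ (small in $\alpha$ because $g^{}_\alpha\to 1^{}_W$ in $L^1$ and Fourier transform is continuous on $L^1$). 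Choosing $\alpha$ then $n$ appropriately shows $a^{}_u = \dens(\cL)\,\widehat{1^{}_W}(-u^\star)$, which one checks equals $a(u) = \frac{\dens(\vL)}{\theta^{}_{\!H}(W)}\,\widehat{1^{}_W}(-u^\star)$ since $\dens(\vL) = \dens(\cL)\,\theta^{}_{\!H}(W)$ by maximality. The vanishing for $u\notin L^0$ comes from the corresponding vanishing of each $\omega^{}_{g^{}_\alpha}$-mean.

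The main obstacle I anticipate is the identity $\widehat{\gamma^{}_{\!\vL}}(\{u\}) = |a^{}_u|^2$, i.e.\ verifying that the Fourier–Bohr coefficient of the measure squares to the diffraction intensity rather than merely matching the amplitude $a(u)$ appearing in Theorem~\ref{thm:weak-pp}. Since Theorem~\ref{thm:weak-pp} already gives $\widehat{\gamma^{}_{\!\vL}}(\{u\}) = |a(u)|^2$ and the argument above identifies $a^{}_u = a(u)$, the squared relation follows immediately once the limit is shown to exist and equal $a(u)$; the delicate point is purely that the weighted-comb approximation respects the phase factor $\overline{\langle u,\cdot\rangle}$ uniformly, which is exactly what the norm (rather than merely vague) control of $\omega^{}_{g^{}_\alpha}\!\mid_{A_n} - \delta^{}_{\!\vL_n}$ buys us.
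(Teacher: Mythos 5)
Your proposal follows essentially the same route as the paper's own proof: approximate $\delta^{}_{\!\vL}$ by the norm-almost periodic combs $\omega^{}_{g^{}_{\alpha}}$, control the difference of the exponential sums via the positivity and small total mass of $(\omega^{}_{g^{}_{\alpha}}-\delta^{}_{\!\vL})\big|_{A_{n}}$ (using $\lvert\langle u,t\rangle\rvert=1$), identify the mean of $\overline{\langle u,\cdot\rangle}\,\omega^{}_{g^{}_{\alpha}}$ with $\widehat{g^{}_{\alpha}}(-u^{\star})$ (resp.\ $0$ for $u\notin L^{0}$) via \cite{LR}, and pass to the limit in $\alpha$ using $\widehat{g^{}_{\alpha}}\to\widehat{1^{}_{W}}$; the final identification $\widehat{\gamma^{}_{\!\vL}}(\{u\})=\lvert a^{}_{u}\rvert^{2}$ is obtained, as you say, by matching $a^{}_{u}$ with the amplitude $a(u)$ from Theorem~\ref{thm:weak-pp}. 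The argument is correct and matches the paper's in all essential steps.
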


\begin{proof}
  The proof is methodically similar to the one of
  Proposition~\ref{prop:dens-versus-gamma}. Let $u\in\widehat{G}$ be
  fixed. For each $\varepsilon > 0$, there is some index $M$ and some
  integer $N$ such that
\[
    0 \, \leqslant \, \omega^{}_{g^{}_{\alpha}}
    (A_{n}) - \delta^{}_{\! \vL} (A_{n}) \, < \,
    \varepsilon \ts \vol (A_{n})
\]
holds for $\alpha = M$ and all $n\succ N$, and actually also for all
$\alpha \succ M$, due to the monotonicity of the functions
$g^{}_{\alpha}$. Consequently, for all $n>N$ and $\alpha \succ M$, we
have
\begin{equation}\label{eq:help-1}
   \biggl| \frac{1}{\vol (A_{n})} \int_{\nts A_{n}} 
   \!\overline{\langle u, t\rangle} \dd 
   \bigl(\omega^{}_{\alpha} - \delta^{}_{\! \vL}\bigr) (t)
   \biggr| \, < \, \varepsilon \ts .
\end{equation}

Now, fix some $\alpha\succ M$ and observe that
$\omega^{}_{g^{}_{\alpha}}$ is norm-almost periodic, so the mean
\[
   \MM \bigl(\ts \overline{\langle u, \cdot\rangle} \, 
   \omega^{}_{g^{}_{\alpha}} \bigr) \, = 
   \lim_{n\to\infty} \frac{1}{\vol (A_{n})}
   \int_{\nts A_{n}} \!\overline{\langle u ,t\rangle} \dd 
   \omega^{}_{g^{}_{\alpha}} (t)
\]
exists and satisfies $ \widehat{\omega^{}_{\! g^{}_{\alpha} \nts *
    \widetilde{g^{}_{\alpha}}}} \bigl( \{ u \} \bigr) = \bigl| \MM
\bigl(\ts \overline{\langle u, \cdot \rangle} \,
\omega^{}_{g^{}_{\alpha}}\bigr)\bigr|^{2}$. Now, if $u\notin L^{0}$,
we have $ \widehat{\omega^{}_{\! g^{}_{\alpha} \nts *
    \widetilde{g^{}_{\alpha}}}} \bigl( \{ u \} \bigr) = 0$ as a
consequence of \cite{LR}.  Thus, there is some integer $N'$ such that
\[
  \biggl| \frac{1}{\vol (A_{n})} \int_{\nts A_{n}} 
  \!\overline{\langle u , t \rangle} 
  \dd  \omega^{}_{\alpha} (t) \biggr|^{2} \, < \, \varepsilon^{2}
\]
holds for all $n>N'$. Together with Eq.~\eqref{eq:help-1}, this
gives
\[
  \biggl| \frac{1}{\vol (A_{n})} \int_{\nts A_{n}} 
  \!\overline{\langle u , t \rangle} 
  \dd  \delta^{}_{\! \vL} (t) \biggr| \, < \, 2 \ts \varepsilon
\]
for all $n>N'$, and thus $a^{}_{u} = 0$ in this case.

If $u\in L^{0}$, we have $ \widehat{\omega^{}_{\! g^{}_{\alpha} \nts *
    \widetilde{g^{}_{\alpha}}}} \bigl( \{ u \} \bigr) = \lvert
\widehat{g^{}_{\alpha}} \rvert^{2} (-u^{\star})$ by \cite{LR}.  Due to
the choice of the net $(g^{}_{\alpha})$, we know that
$\widehat{1^{}_{W}} (v) = \lim_{\alpha} \widehat{g^{}_{\alpha}} (v)$
for all $v\in H$, hence $\bigl| \widehat{1^{}_{W}} (-u^{\star}) -
\widehat{g^{}_{\alpha}} (-u^{\star}) \bigr| < \varepsilon$ for some
$\alpha \succ M$.  As $ \MM \bigl(\ts \overline{\langle u, \cdot
  \rangle} \, \omega^{}_{g^{}_{\alpha}}\bigr) =
\widehat{g^{}_{\alpha}} (-u^{\star})$, there is some $N_{1} > N$ such
that, for all $n>N_{1}$, one has
\[
   \biggl| \ts \widehat{g^{}_{\alpha}} (-u^{\star}) -
    \frac{1}{\vol (A_{n})} \int_{\nts A_{n}} 
    \! \overline{\langle u , t \rangle} \dd 
    \omega^{}_{\alpha} (t) \biggr| \, < \, \varepsilon \ts .
\]   
Combined with Eq.~\eqref{eq:help-1} again, we get
\[
   \biggl| \widehat{1^{}_{W}} (-u^{\star}) -
    \frac{1}{\vol (A_{n})} \int_{\nts A_{n}} 
    \! \overline{\langle u , t \rangle} \dd 
    \omega^{}_{\alpha} (t) \biggr| \, < \, 3\ts \varepsilon
\]
for all $N>N_{1}$. This gives $a^{}_{u} = \widehat{1^{}_{W}}
(-u^{\star})$ as claimed.
\end{proof}

At this point, one should realise that relatively little can be said
in general when the maximality condition is not satisfied, as becomes
evident from the large class of Toeplitz sequences. They can be
realised as model sets with proper windows \cite{BLO}, hence also as
weak model sets. The irregular Toeplitz sequences then display a
wealth of possible phenomena. However, there is still one other class
of weak model sets which behaves nicely, too.

\begin{definition}\label{def:two}
  For a given CPS $(G,H,\cL)$ as in Definition~\ref{def:one}, a
  projection set $\oplam (W)$ is called a \emph{weak model set of
    minimal density} relative to a given van Hove averaging sequence
  $\cA$ if the window $W\subseteq H$ is relatively compact with
  $\theta^{}_{\! H} ( W^{\circ}) > 0$, if the density of $\oplam (W)$
  relative to $\cA$ exists, and if the density condition $\dens
  (\oplam (W)) = \dens (\cL) \, \theta^{}_{\!  H} ( W^{\circ})$ is
  satisfied.
\end{definition}

Note first that weak model sets of minimal density, in this setting,
are Meyer sets, and thus perhaps less interesting than their
counterparts with maximal density. In analogy to before, we could
alternatively demand $\overline{\dens} (\oplam (W)) = \dens (\cL) \,
\theta^{}_{\!  H} ( W^{\circ})$, which then entails the existence of
the density via Fact~\ref{fact:dens-bound}. Also, one could extend the
setting by only asking for $W^{\circ}$ to be relatively compact, not
$W$, which might take us outside the Meyer set class; we will not
pursue this further here. Nevertheless, repeating our approximation
with weighted Dirac combs $\omega^{}_{h^{}_{\alpha}}$, this time from
below via a suitable net $(h^{}_{\alpha})$ of compactly supported
functions on $H$ such that $0 \leqslant \omega^{}_{h^{}_{\alpha}}
\leqslant \delta^{}_{\! \smoplam (W^{\circ})}$ holds, one finds the
following analogue of Theorem~\ref{thm:weak-pp}.

\begin{theorem}\label{thm:min-pp}
  Let\/ $(G,H,\cL)$ be a CPS as in Theorem~$\ref{thm:weak-pp}$ and
  let\/ $W\subseteq H$ be relatively compact with\/ $\theta^{}_{\! H}
  (W^{\circ}) >0$.  Consider the weak model set\/ $\vL = \oplam (W)$
  and assume that it is of minimal density for a given van Hove
  averaging sequence\/ $\cA$ in\/ $G$, in the sense of
  Definition~$\ref{def:two}$.  Then, the autocorrelation measure of\/
  $\vL$ resp.\ $\delta^{}_{\! \vL}$ relative to\/ $\cA$ exists and
  satisfies\/ $\gamma^{}_{\!  \vL} = \dens (\cL) \,
  \omega^{}_{c^{}_{W^{\circ}}}$, where\/ $c^{}_{W^{\circ}}$ is the
  covariogram function of\/ $1^{}_{W^{\circ}}$. Moreover,
  $\widehat{\ts\gamma^{}_{\! \vL}\ts}$ is a translation bounded,
  positive, pure point measure, which is given by the formulas from
  Theorem~$\ref{thm:weak-pp}$ with\/ $W$ replaced by\/ $W^{\circ}$.
  Finally, the statement of Proposition~$\ref{prop:amplitudes}$ holds
  here as well.
\end{theorem}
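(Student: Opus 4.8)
The plan is to mirror the proofs of Proposition~\ref{prop:dens-versus-gamma} and Theorem~\ref{thm:weak-pp}, with the single but decisive change that the window indicator is now approximated from \emph{below}. Set $K := \overline{W^{\circ}}$, which is compact because $W$ is relatively compact. Using the inner regularity of $\theta^{}_{\! H}$ together with Urysohn's lemma (all LCAGs are normal by \cite[Thm.~8.13]{HR-Book}), I would first produce a net of $[0,1]$-valued functions $h^{}_{\alpha} \in C_{\mathsf{c}} (H)$ with $\supp (h^{}_{\alpha}) \subseteq W^{\circ}$, hence $0 \leqslant h^{}_{\alpha} \leqslant 1^{}_{W^{\circ}}$, and $\lim_{\alpha} \theta^{}_{\! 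H} (h^{}_{\alpha}) = \theta^{}_{\! H} (W^{\circ})$. Exactly as in Eq.~\eqref{eq:cov-estimate}, this forces $h^{}_{\alpha} \nts * \widetilde{h^{}_{\alpha}} \to 1^{}_{W^{\circ}} * \widetilde{1^{}_{W^{\circ}}} = c^{}_{W^{\circ}}$ uniformly, and the associated combs $\omega^{}_{h^{}_{\alpha} * \widetilde{h^{}_{\alpha}}}$ --- all supported in the common Meyer set $\oplam (K - K)$ --- converge vaguely to $\omega^{}_{c^{}_{W^{\circ}}}$.

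The crucial structural point is that, since $\supp (h^{}_{\alpha}) \subseteq W^{\circ} \subseteq W$, every contributing point lies in $\vL$ with weight at most one, so now $\omega^{}_{h^{}_{\alpha}} \leqslant \delta^{}_{\! \vL}$ holds --- the reverse of the inequality $\omega^{}_{g^{}_{\alpha}} \geqslant \delta^{}_{\! \vL}$ used before. In particular the combs are equi-translation-bounded for free, as $\omega^{}_{h^{}_{\alpha}} \leqslant \delta^{}_{\! \vL}$, and the finite measure $\frac{1}{\vol (A_{n})}(\delta^{}_{\! \vL^{}_{n}} - \omega^{}_{\alpha,n})$, with $\omega^{}_{\alpha,n} := \omega^{}_{h^{}_{\alpha}}|^{}_{A_{n}}$, is positive, its total mass being $\frac{\delta^{}_{\! \vL} (A_{n}) - \omega^{}_{h^{}_{\alpha}} (A_{n})}{\vol (A_{n})}$. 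Here the \emph{minimal}-density hypothesis does the work: $\frac{\delta^{}_{\! \vL} (A_{n})}{\vol (A_{n})} \to \dens (\vL) = \dens (\cL) \, \theta^{}_{\! H} (W^{\circ})$, while $\omega^{}_{h^{}_{\alpha}}$ is norm-almost periodic \cite{BM,RS} with mean $\dens (\cL) \, \theta^{}_{\! H} (h^{}_{\alpha})$, so for $\alpha$ and $n$ large the total mass above is controlled by $\dens (\cL)\bigl(\theta^{}_{\! H} (W^{\circ}) - \theta^{}_{\! H} (h^{}_{\alpha})\bigr)$ and hence made arbitrarily small.

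With this norm control in hand, the $8\ts\varepsilon$-argument of Proposition~\ref{prop:dens-versus-gamma} goes through verbatim: splitting $\omega^{}_{c^{}_{W^{\circ}}} (f) - \frac{1}{\vol (A_{n})}\bigl( \delta^{}_{\! \vL^{}_{n}} * \widetilde{\delta^{}_{\! \vL^{}_{n}}}\bigr) (f)$ into the three familiar differences and bounding the cross terms $(\omega^{}_{\alpha,n} - \delta^{}_{\! \vL^{}_{n}}) * \widetilde{\omega^{}_{\alpha,n}}$ and $\delta^{}_{\! \vL^{}_{n}} * (\widetilde{\omega^{}_{\alpha,n}} - \widetilde{\delta^{}_{\! \vL^{}_{n}}})$ via Fact~\ref{fact:bounds}, I obtain that the autocorrelation exists and equals $\dens (\cL)\, \omega^{}_{c^{}_{W^{\circ}}}$. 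Since $c^{}_{W^{\circ}}$ is continuous (Fact~\ref{fact:cont}), supported in $K - K$, and positive definite, Theorem~\ref{thm:diffraction-formula} then yields the asserted pure point form of $\widehat{\ts\gamma^{}_{\! \vL}\ts}$ with $W$ replaced by $W^{\circ}$. Finally, the amplitude statement is proved by the same scheme as Proposition~\ref{prop:amplitudes}, replacing $g^{}_{\alpha}$ by $h^{}_{\alpha}$ and using $\widehat{1^{}_{W^{\circ}}} (v) = \lim_{\alpha} \widehat{h^{}_{\alpha}} (v)$ together with $\widehat{\omega^{}_{h^{}_{\alpha} * \widetilde{h^{}_{\alpha}}}} (\{ u \}) = \lvert \widehat{h^{}_{\alpha}} \rvert^{2} (-u^{\star})$ for $u \in L^{0}$ and $0$ otherwise.

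I expect the only genuine subtlety --- and thus the main thing to get right --- to be the lower approximation itself: one must verify that the inner-regular exhaustion of the \emph{open} set $W^{\circ}$ by compacta, fed through Urysohn, produces a net with $\theta^{}_{\! H} (h^{}_{\alpha}) \to \theta^{}_{\! H} (W^{\circ})$ while keeping $\omega^{}_{h^{}_{\alpha}} \leqslant \delta^{}_{\! \vL}$, so that the sign of the key estimate flips cleanly relative to the maximal-density case. Everything downstream is then a faithful transcription of the earlier arguments, and the Meyer property of $\vL$ (from $\vL \subseteq \oplam (\overline{W})$) guarantees that all vague-convergence and support arguments remain valid.
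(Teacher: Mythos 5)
Your proof is correct, but it follows a different route from the one the paper actually writes down. You carry out the ``approximation from below'' programme in full: a net $h^{}_{\alpha} \in C_{\mathsf{c}}(H)$ with $0 \leqslant h^{}_{\alpha} \leqslant 1^{}_{W^{\circ}}$ and $\theta^{}_{\! H}(h^{}_{\alpha}) \to \theta^{}_{\! H}(W^{\circ})$, the flipped positivity $\omega^{}_{h^{}_{\alpha}} \leqslant \delta^{}_{\! \vL}$, the norm bound on $\frac{1}{\vol(A_n)}(\delta^{}_{\! \vL_n} - \omega^{}_{\alpha,n})$ coming from the minimality hypothesis, and then the $8\ts\varepsilon$-argument and the amplitude computation verbatim. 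This is precisely the strategy the paper announces in the paragraph preceding the theorem, but its actual proof deliberately avoids repeating that machinery: it instead chooses a proper compact $K \supseteq W$ with $\theta^{}_{\! H}(\partial K)=0$, observes that $\vL^{}_{1} = \oplam(K)$ and $\vL^{}_{2} = \oplam(K\setminus W^{\circ})$ are both weak model sets of \emph{maximal} density, writes $\delta^{}_{\! \oplam(W^{\circ})} = \delta^{}_{\! \vL^{}_{1}} - \delta^{}_{\! \vL^{}_{2}}$, and expands the approximating autocorrelations into four Eberlein convolutions handled by Remark~\ref{rem:mixed}; the identity $(1^{}_{K} - 1^{}_{K\setminus W^{\circ}}) * (1^{}_{K} - 1^{}_{K\setminus W^{\circ}})^{\widetilde{\phantom{a}}} = c^{}_{W^{\circ}}$ then finishes the argument, with existence of the limit coming for free. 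The trade-off: your version is self-contained and structurally symmetric to the maximal-density case, but requires re-verifying every estimate with reversed inequalities (and you rightly identify the construction of the net $(h^{}_{\alpha})$ via inner regularity plus Urysohn as the one point needing care --- it does go through); the paper's version is shorter because it reduces everything to results already proved, at the modest cost of the auxiliary regular window $K$ and the check that $\oplam(K\setminus W^{\circ})$ has maximal density. One cosmetic remark: $\vL \subseteq \oplam(\ts\overline{\! W\!}\ts)$ by itself only gives uniform discreteness and FLC, not the Meyer property, but that is all your convergence and support arguments actually use, so nothing is affected.
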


\begin{proof}
  Instead of repeating our previous arguments with an approximation
  from below, let us employ an alternative argument that emphasises
  the complementarity. First, under our minimality assumption, the
  weak model sets $\vL=\oplam (W)$ and $\vL^{}_{0}:=\oplam
  (W^{\circ})$ possess the same autocorrelation measure (provided the
  limit along $\cA$ exists, which we still have to show), as $\oplam
  (W\setminus W^{\circ})$ is a point set of zero density for $\cA$.

  Now, let $K\subseteq H$ be a compact set that contains $W\!$, is
  proper, and satisfies $\theta^{}_{\! H} (\partial K) =0$, which is
  clearly possible. Consequently, $\vL^{}_{1} := \oplam (K)$ is a
  regular model set with all the nice properties, in particular
  relative to $\cA$. It is thus also a weak model set of maximal
  density for $\cA$.

  Next, consider $\vL^{}_{2} := \oplam (K\setminus W^{\circ})$, which
  is another weak model set of maximal density relative to $\cA$.  As
  $\delta^{}_{\! \vL^{}_{0}} = \delta^{}_{\! \vL^{}_{1}} -
  \delta^{}_{\!  \vL^{}_{2}}$, we can employ Remark~\ref{rem:mixed} to
  relate the various correlation measures. With $\vL_{i,n} :=
  \vL_{i}\cap\ts A_{n}$, we get for the approximating autocorrelations
  of $\delta^{}_{\! \vL^{}_{0}}$ that
\[
\begin{split}
  \lim_{n\to\infty} \frac{\delta^{}_{\! \vL^{}_{0,n}}\! * 
  \widetilde{\delta^{}_{\! \vL^{}_{0,n}}}}
  {\vol (A_{n})} \, & = \lim_{n\to\infty} \frac{ 
  \bigl( \delta^{}_{\! \vL^{}_{1,n}} \! - 
     \delta^{}_{\! \vL^{}_{2,n}} \bigr) 
  * \bigl( \delta^{}_{\! \vL^{}_{1,n}}\! -
    \delta^{}_{\! \vL^{}_{2,n}} \bigr)^{\! \widetilde{\phantom{aa}}}} 
    {\vol (A_{n})} \\
  & =   \lim_{n\to\infty} \left(
  \frac{\delta^{}_{\! \vL^{}_{1,n}}\! * 
   \widetilde{\delta^{}_{\! \vL^{}_{1,n}}}}
  {\vol (A_{n})} + \frac{\delta^{}_{\! \vL^{}_{2,n}}\! *
   \widetilde{\delta^{}_{\! \vL^{}_{2,n}}}} {\vol (A_{n})} 
  - \frac{\delta^{}_{\! \vL^{}_{2,n}}\! * 
   \widetilde{\delta^{}_{\! \vL^{}_{1,n}}}}{\vol (A_{n})}
   - \frac{\delta^{}_{\! \vL^{}_{1,n}}\! *
   \widetilde{\delta^{}_{\! \vL^{}_{2,n}}}}
  {\vol (A_{n})}  \right) \\[2mm]
  & = \, \dens (\cL) \Bigl(
  \omega^{}_{1^{}_{\! K} * \widetilde{1^{}_{\! K}}} + 
  \omega^{}_{1^{}_{\! K\setminus W^{\circ}} *
    \widetilde{1^{}_{\! K\setminus W^{\circ}}}} -
  \omega^{}_{1^{}_{\! K\setminus W^{\circ}} * \widetilde{1^{}_{\! K}}} -
  \omega^{}_{1^{}_{\! K} * \widetilde{1^{}_{\! K\setminus W^{\circ}}}}
   \Bigr) \\[1mm]
  & = \, \dens (\cL) \, \omega^{}_{
  (1^{}_{\! K} - 1^{}_{\! K\setminus W^{\circ}}) *
  (1^{}_{\! K} - 1^{}_{\! K\setminus W^{\circ}})^{\!\widetilde{\phantom{aa}}}}
  \, = \,  \dens (\cL) \, \omega^{}_{c^{}_{W^{\circ}}} \ts ,
\end{split}
\]
which also establishes the existence of the limit and thus completes
our argument.
\end{proof}

At this stage, in the spirit of Fact~\ref{fact:dens-bound}, one can
derive the following sandwich result for an arbitrary autocorrelation
of a weak model set.

\begin{coro}\label{coro:sandwich}
  Let\/ $\vL$ be a weak model set for the CPS\/ $(G,H,\cL)$ from
  above, with relatively compact window\/ $W\!$. If\/ $\gamma$ is any
  autocorrelation of\/ $\vL$, it satisfies the measure inequality
\[
     0 \, \leqslant \,
     \dens (\cL) \, \omega^{}_{c^{}_{W^{\circ}}} \leqslant \,
     \gamma \, \leqslant \, \dens (\cL) \,
     \omega^{}_{c^{}_{\,\overline{\! W\! }\,}} \ts ,
\]   
with\/ $c^{}_{\! A}$ the covariogram function of the measurable set\/
$A$.
\end{coro}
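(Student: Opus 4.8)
\emph{Proof proposal.} The plan is to sandwich $\gamma$ by comparing the \emph{finite} autocorrelation approximants of $\delta^{}_{\! \vL}$ with those of weighted Dirac combs that bound $\delta^{}_{\! \vL}$ from above and below, and then to pass to the vague limit. Fix an arbitrary autocorrelation $\gamma$ of $\vL$, so $\gamma = \lim_{k} \gamma^{(n_k)}_{\delta^{}_{\! \vL}}$ vaguely along some subsequence $(A^{}_{n_k})$ of $\cA$; as a vague limit of the positive measures $\gamma^{(n)}_{\delta^{}_{\! \vL}}$, it is itself positive. Since $\oplam (W^{\circ}) \subseteq \vL = \oplam (W)$, we may pick $g, h \in C_{\mathsf{c}} (H)$ with $g \geqslant 1^{}_{\,\overline{\! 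W\!}\,}$ and $0 \leqslant h \leqslant 1^{}_{W^{\circ}}$, and then check directly from the definition of $\omega^{}_{\cdot}$ that $\omega^{}_{h} \leqslant \delta^{}_{\! \vL} \leqslant \omega^{}_{g}$ as positive measures on $G$ (for $x\in\vL$ one has $g(x^{\star}) \geqslant 1^{}_{\,\overline{\! W\!}\,} (x^{\star}) = 1$, while $h(x^{\star})$ can be nonzero only for $x^{\star}\in W^{\circ}$, forcing $x\in\oplam (W^{\circ})\subseteq\vL$).

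The key device is the elementary monotonicity property that, for two \emph{positive} finite measures with $\mu \leqslant \nu$, one has $\mu * \widetilde{\mu} \leqslant \nu * \widetilde{\nu}$: writing $\rho = \nu - \mu \geqslant 0$, the difference $\nu * \widetilde{\nu} - \mu * \widetilde{\mu} = \mu * \widetilde{\rho} + \rho * \widetilde{\mu} + \rho * \widetilde{\rho}$ is a sum of convolutions of positive measures, hence positive. I would apply this to the restrictions to $A^{}_{n}$, noting that $\omega^{}_{h}|^{}_{A_n} \leqslant \delta^{}_{\! \vL_n} \leqslant \omega^{}_{g}|^{}_{A_n}$ still holds (multiplication by $1^{}_{A_n}\geqslant 0$ preserves the inequalities), which gives
\[
   \gamma^{(n)}_{\omega^{}_{h}} \, \leqslant \,
   \gamma^{(n)}_{\delta^{}_{\! \vL}} \, \leqslant \,
   \gamma^{(n)}_{\omega^{}_{g}} \ts ,
\]
for every $n$. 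As $\omega^{}_{g}$ and $\omega^{}_{h}$ are weighted model sets with continuous, compactly supported weights, they are norm-almost periodic, so their autocorrelations exist relative to $\cA$ and equal $\dens (\cL)\, \omega^{}_{g * \widetilde{g}}$ and $\dens (\cL)\, \omega^{}_{h * \widetilde{h}}$ by \cite{RS}. Evaluating the chain on any $f \in C_{\mathsf{c}} (G)$ with $f \geqslant 0$ and letting $k\to\infty$ along the subsequence yields the measure inequality $\dens (\cL)\, \omega^{}_{h * \widetilde{h}} \leqslant \gamma \leqslant \dens (\cL)\, \omega^{}_{g * \widetilde{g}}$.

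It remains to refine the bounds. Exactly as in the proof of Proposition~\ref{prop:dens-versus-gamma}, Urysohn's lemma together with the inner and outer regularity of $\theta^{}_{\! H}$ provides nets $(g^{}_{\alpha})$ and $(h^{}_{\alpha})$, all supported within one fixed compact $K\subseteq H$, such that $\theta^{}_{\! H} (g^{}_{\alpha}) \to \theta^{}_{\! H} (\,\overline{\! W\!}\,)$ and $\theta^{}_{\! H} (h^{}_{\alpha}) \to \theta^{}_{\! H} (W^{\circ})$. The estimate of the type in Eq.~\eqref{eq:cov-estimate} then gives uniform convergence $g^{}_{\alpha} * \widetilde{g^{}_{\alpha}} \to c^{}_{\,\overline{\! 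W\!}\,}$ and $h^{}_{\alpha} * \widetilde{h^{}_{\alpha}} \to c^{}_{W^{\circ}}$; since all the combs $\omega^{}_{g^{}_{\alpha} * \widetilde{g^{}_{\alpha}}}$ and $\omega^{}_{h^{}_{\alpha} * \widetilde{h^{}_{\alpha}}}$ live on the common Meyer set $\oplam (K - K)$, this entails vague convergence $\omega^{}_{g^{}_{\alpha} * \widetilde{g^{}_{\alpha}}} \to \omega^{}_{c^{}_{\,\overline{\! W\!}\,}}$ and $\omega^{}_{h^{}_{\alpha} * \widetilde{h^{}_{\alpha}}} \to \omega^{}_{c^{}_{W^{\circ}}}$. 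Evaluating the inequalities of the previous paragraph on $f\geqslant 0$ and passing to the limit over $\alpha$ produces $\dens (\cL)\, \omega^{}_{c^{}_{W^{\circ}}} \leqslant \gamma \leqslant \dens (\cL)\, \omega^{}_{c^{}_{\,\overline{\! W\!}\,}}$, while $0 \leqslant \dens (\cL)\, \omega^{}_{c^{}_{W^{\circ}}}$ is immediate from $c^{}_{W^{\circ}} \geqslant 0$. The main point to get right is the monotonicity of the twisted self-convolution under the measure order, combined with the observation that, because $\gamma$ is an \emph{arbitrary} accumulation point and the autocorrelations of $\oplam (\,\overline{\! W\!}\,)$ and $\oplam (W^{\circ})$ need not exist for $\cA$, the comparison must be carried out at the level of the finite approximants and only then transported through the vague limit.
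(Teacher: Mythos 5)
Your argument is correct and follows essentially the same route as the paper: sandwich $\delta^{}_{\! \vL}$ between the weighted combs $\omega^{}_{h^{}_{\alpha}}$ and $\omega^{}_{g^{}_{\alpha}}$, use their norm-almost periodicity to identify their autocorrelations as $\dens(\cL)\,\omega^{}_{h^{}_{\alpha}\nts * \widetilde{h^{}_{\alpha}}}$ and $\dens(\cL)\,\omega^{}_{g^{}_{\alpha}\nts * \widetilde{g^{}_{\alpha}}}$, transport the order through the autocorrelation, and then let $\alpha$ run. The only difference is that you make explicit the monotonicity step (via the decomposition $\nu * \widetilde{\nu} - \mu * \widetilde{\mu} = \mu * \widetilde{\rho} + \rho * \widetilde{\mu} + \rho * \widetilde{\rho}$ applied to the finite restrictions) which the paper dismisses as ``standard arguments''.
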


\begin{proof}
  Select nets $(h^{}_{\alpha})$ and $(g^{}_{\alpha})$ of continuous
  functions with $h^{}_{\alpha} \, {\scriptstyle \nearrow} \,
  1^{}_{W^{\circ}}$ and $g^{}_{\alpha} \, {\scriptstyle \searrow} \,
  1^{}_{\,\overline{\! W \!}\,}$ in analogy to our previous
  arguments. Also, let $\cB = (B_{n})^{}_{n\in\NN}$ be a van Hove
  sequence relative to which the autocorrelation of $\vL$ is
  $\gamma$. Then, as $\omega^{}_{h^{}_{\alpha}}$ and
  $\omega^{}_{g^{}_{\alpha}}$ are norm-almost periodic measures, their
  autocorrelation measures with respect to $\cB$ exist and are given
  by $\omega^{}_{\! h^{}_{\alpha}\nts * \widetilde{h^{}_{\alpha}}}$
  and $\omega^{}_{\nts g^{}_{\alpha} \nts * \ts
    \widetilde{g^{}_{\alpha}}}$, respectively.  Both are positive and
  positive definite measures.

  Now, we have $0 \leqslant \omega^{}_{h^{}_{\alpha}} \leqslant
  \delta^{}_{\!\vL} \leqslant \omega^{}_{g^{}_{\alpha}}$ by
  construction, which implies
\[
    0 \, \leqslant \,
    \dens (\cL) \, \omega^{}_{h^{}_{\alpha}\nts * 
    \widetilde{h^{}_{\alpha}}} \, \leqslant \, \gamma
    \, \leqslant \, \dens (\cL) \,
    \omega^{}_{g^{}_{\alpha} \nts * \ts \widetilde{g^{}_{\alpha}}}
\]
by standard arguments. Since $h^{}_{\alpha}\nts *
\widetilde{h^{}_{\alpha}} \xrightarrow{\quad} c^{}_{W^{\circ}}$ and
$g^{}_{\alpha} \nts * \ts \widetilde{g^{}_{\alpha}}
\xrightarrow{\quad} c^{}_{\, \overline{\! W \!}\,}$, we obtain the
claimed inequality by taking the limits of the previous inequality in
$\alpha$.
\end{proof}

The general spectral theory can now be developed further, aiming at a
result on the dynamical spectrum of the hull of weak model sets of
extremal density. For this, we first need to construct a suitable
measure and establish its ergodicity.

\section{Hull, ergodicity and dynamical spectrum}\label{sec:hull}

Let us fix a CPS $(G,H,\cL)$ with a $\sigma$-compact LCAG $G$, a
compactly generated LCAG $H$ and a lattice $\cL \subset G \times H$ as
before, and let $\vL = \oplam (W)$ with compact $W\subseteq H$ be a
weak model set of maximal density, relative to a fixed van Hove
averaging sequence $\cA = (A_{n})^{}_{n\in \NN}$. The (geometric) hull
of $\vL$ is the orbit closure $\overline{G + \nts\vL}$ in the local
topology; compare \cite[Sec.~5.4]{TAO} for background. Note that our
point set $\vL$ is an FLC set, so that the local topology suffices (it
is a special case of a Fell topology \cite{BL}). The group $G$ acts
continuously on the hull by translations.

In view of our further reasoning, we now represent $\vL$ by its Dirac
comb $\delta^{}_{\! \vL}$, which is a translation bounded, positive
pure point measure with support $\vL$. Its hull is
\[
     \XX^{}_{\vL} \, := \, \overline{ \{ \delta^{}_{t} * 
    \delta^{}_{\! \vL} \mid t \in G \} }\ts ,
\]
where the closure is taken in the vague topology. By standard
arguments, $\XX^{}_{\vL}$ is vaguely compact, with a continuous action
of $G$ on it. Clearly, $\delta^{}_{t} * \delta^{}_{\! \vL} =
\delta^{}_{t+\vL}$, so that the topological dynamical systems $(\vL,
G)$ and $(\XX^{}_{\vL},G)$ are topologically conjugate.

Let now $(g^{}_{\alpha})$ be the net of $C_{\mathsf{c}} (H)$-functions
with $1^{}_{K_{g}} \geqslant g^{}_{\alpha} \geqslant 1^{}_{W}$ from
the proof of Proposition~\ref{prop:dens-versus-gamma}, and consider
the weighted Dirac combs
\[
    \omega^{}_{g^{}_{\alpha}} \, = \sum_{x\in {\smoplam} (K_{g})}
       g^{}_{\alpha} (x^{\star}) \, \delta^{}_{x} \ts ,
\]
where $K_{g}\subseteq H$ is the compact set introduced earlier that
covers the supports of all $g^{}_{\alpha}$.  Since each
$\omega^{}_{g^{}_{\alpha}}$, as well as $\delta^{}_{\! \vL}$, is
supported in the same Meyer set $\oplam (K_{g})$, we have pointwise
(and hence norm) convergence $\lim_{\alpha} \omega^{}_{g^{}_{\alpha}}
= \delta^{}_{\! \vL}$. Moreover, for each comb
$\omega^{}_{g^{}_{\alpha}}$, there is a hull $\XX^{}_{\alpha} =
\overline{ \{ \delta^{}_{t} * \omega^{}_{g^{}_{\alpha}} \mid t \in G
  \} }$ that is compact in the vague topology and defines a
topological dynamical system $(\XX^{}_{\alpha}, G)$. In fact, one has
more; see \cite[Thm.~3.1]{LR} as well as \cite{LS2}.

\begin{fact}\label{fact:ergodic}
  Each dynamical system\/ $(\XX^{}_{\alpha}, G)$ is minimal and admits
  precisely one\/ $G$-invariant probability measure, $\mu^{}_{\alpha}$
  say, and is thus strictly ergodic. Moreover, the system is
  topologically conjugate to its maximal equicontinuous factor,
  wherefore it has pure point diffraction and dynamical spectrum, and
  the hull possesses a natural structure as a compact Abelian
  group. \qed
\end{fact}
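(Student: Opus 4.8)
The plan is to realise each $(\XX^{}_{\alpha},G)$ as a minimal rotation on a compact Abelian group through the torus parametrisation, after which every assertion becomes a standard property of such rotations. I would start from the torus $\TT = (G\times H)/\cL$ and the map
\[
   \beta \colon \TT \longrightarrow \cM^{\infty}(G) \ts , \qquad
   [(t,h)] \longmapsto \sum_{x\in L} g^{}_{\alpha}(x^{\star}-h)\, \delta^{}_{x+t} \ts .
\]
Using $\cL = \{(\ell,\ell^{\star}) \mid \ell\in L\}$, the right-hand side depends only on the class $[(t,h)]\in\TT$, so $\beta$ is well defined, and since $g^{}_{\alpha}\in C_{\mathsf{c}}(H)$ is continuous while $\oplam(K_{g})$ is uniformly discrete, $\beta$ is continuous for the vague topology. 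As $\beta([(t,0)]) = \delta^{}_{t}*\omega^{}_{g^{}_{\alpha}}$ and $\pi^{}_{\mathrm{int}}(\cL)=L^{\star}$ is dense in $H$, the compact set $\beta(\TT)$ is $G$-invariant and contains the full orbit of $\omega^{}_{g^{}_{\alpha}}$, whence $\beta(\TT)=\XX^{}_{\alpha}$.

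The decisive step is the injectivity of $\beta$. If $\beta([(t,h)])=\beta([(t',h')])$, then the two (nonempty) supports agree; as they lie in the cosets $t+L$ and $t'+L$, these cosets coincide, and after subtracting a suitable lattice point I may take $t=t'$. The equality of combs then reads $g^{}_{\alpha}(x^{\star}-h)=g^{}_{\alpha}(x^{\star}-h')$ for all $x\in L$, and continuity together with the density of $L^{\star}$ upgrades this to $g^{}_{\alpha}(v-h)=g^{}_{\alpha}(v-h')$ for all $v\in H$; thus $h-h'$ is a period of $g^{}_{\alpha}$. Here the compact support of $g^{}_{\alpha}$ is essential: its group of periods leaves the compact set $\supp(g^{}_{\alpha})$ invariant, hence is a compact subgroup $P\subseteq H$, and $\beta$ descends to a \emph{bijection} of the quotient torus $\TT^{}_{\!\alpha} := (G\times H)/(\cL + (\{0\}\times P))$ onto $\XX^{}_{\alpha}$. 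This is exactly the statement established in \cite[Thm.~3.1]{LR}, see also \cite{LS2}, which I would cite; in the arithmetic examples of interest (such as the visible points) one checks directly that $P$ is trivial, so that $\XX^{}_{\alpha}\simeq\TT$.

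Being a continuous bijection between compact Hausdorff spaces, the induced map $\TT^{}_{\!\alpha}\to\XX^{}_{\alpha}$ is a homeomorphism intertwining the $G$-action on $\XX^{}_{\alpha}$ with the rotation $t\cdot[(s,h)]=[(s+t,h)]$; this endows $\XX^{}_{\alpha}$ with a compact Abelian group structure having $\omega^{}_{g^{}_{\alpha}}$ as neutral element. The rotation by $\{[(t,0)]\mid t\in G\}$ has dense range, again because $L^{\star}$ is dense, so $(\TT^{}_{\!\alpha},G)$ is minimal; a minimal rotation on a compact Abelian group is uniquely ergodic with normalised Haar measure as its only invariant probability measure, which proves strict ergodicity and identifies $\mu^{}_{\alpha}$. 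Finally, a compact group rotation is equicontinuous, so $(\XX^{}_{\alpha},G)$ is conjugate to its maximal equicontinuous factor and has pure point dynamical spectrum, its eigenfunctions being the characters of $\TT^{}_{\!\alpha}$; pure point diffractivity then follows either from Theorem~\ref{thm:diffraction-formula} applied to $\omega^{}_{g^{}_{\alpha}*\widetilde{g^{}_{\alpha}}}$ or from the equivalence theorem \cite{BL,BLvE}. The one genuinely delicate point is thus the injectivity of $\beta$, that is, controlling the periods of $g^{}_{\alpha}$; once the conjugacy to a compact group rotation is in place, all remaining claims are formal.
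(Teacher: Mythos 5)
Your argument is correct and is essentially the paper's own route: the paper does not prove this Fact but simply refers to \cite[Thm.~3.1]{LR} and \cite{LS2}, and your torus-parametrisation argument (realising $\XX^{}_{\alpha}$ as the compact Abelian group $(G\times H)/(\cL+(\{0\}\times P))$ with $P$ the period group of $g^{}_{\alpha}$, then invoking minimality and unique ergodicity of a dense rotation) is precisely the proof behind that citation. You correctly isolate the injectivity of $\beta$ modulo periods as the only delicate step, so nothing is missing.
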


Clearly, the Dirac comb $\delta^{}_{\! \smoplam (K_{g})}$ is
translation bounded, so there is a compact set $K\subset G$ and a
constant $C>0$ such that $\| \delta^{}_{\! \smoplam (K_{g})} \|^{}_{K}
\leqslant C$.  By construction, we also have $\vL \subset \oplam
(K_{g})$. Consequently, both our Dirac comb $\delta^{}_{\! \vL}$ and
the measures $\omega^{}_{g^{}_{\alpha}}$ are elements of
\[
    \YY \, := \, \big\{ \nu \in \cM^{\infty} (G) \,\big|\,
    \| \nu \|^{}_{K} \leqslant C \big\}\ts ,
\]
which is a compact subset of $\cM^{\infty} (G)$. In fact, for all
$\alpha$, we have the relation
\begin{equation}\label{eq:compare}
    0 \, \leqslant \, \delta^{}_{\! \vL} \, \leqslant \,
    \omega^{}_{g^{}_{\alpha}} \, \leqslant \, \delta^{}_{\! \smoplam (K_{g})}
    \, \in \, \YY
\end{equation}
as an inequality between pure point measures. Moreover, we also have
$\XX^{}_{\vL} \subset \YY$ as well as $\XX^{}_{\alpha} \subset \YY$
for all $\alpha$.  Clearly, the measures $\mu^{}_{\alpha}$ have a
trivial extension to measures on $\YY$, still called
$\mu^{}_{\alpha}$, such that $\supp (\mu^{}_{\alpha}) =
\XX^{}_{\alpha}$.  In particular, $\omega^{}_{g^{}_{\alpha}}$ is then
generic for $\mu^{}_{\alpha}$.  We can now work within $\YY$ for
approximation purposes. To do so, we need a smoothing operation, which
is based on the linear mapping $ \phi \! : \, C_{\mathsf{c}} (G)
\xrightarrow{\quad} C (\YY) $, $c \mapsto \phi^{}_{c}$, where
\[
      \phi^{}_{c} (\nu) \, := \, \bigl( \nu * c \bigr) (0) \ts .
\]
This is the standard approach to lift continuous functions on $G$ with
compact support to continuous functions on a compact measure space
such as $\YY$. It underlies the fundamental relation between
diffraction and dynamical spectra via the Dworkin argument; compare
\cite{BL,BLvE} and references therein.

\begin{lemma}\label{lem:orbit-average-one}
  For each\/ $c\in C_{\mathsf{c}} (G)$ and each\/ $\varepsilon > 0$,
  there exists some bound\/ $M$ and some integer\/ $N$ such that
\[
     \frac{1}{\vol (A_{n})}  \int_{\nts A_{n}} \bigl|
     \phi^{}_{c} (\delta^{}_{-t} * \omega^{}_{g^{}_{\alpha}}) -
     \phi^{}_{c} (\delta^{}_{-t} * \delta^{}_{\! \vL})
     \bigr| \dd t  \, < \, \varepsilon
\]    
   holds for all\/ $\alpha \succ M$ and all\/ $n > N$.
\end{lemma}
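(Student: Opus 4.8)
The plan is to reduce the orbit average to the average of a single positive measure and then invoke the van Hove property.

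First I would rewrite the integrand explicitly. From $\phi^{}_{c} (\mu) = (\mu * c)(0)$ and the fact that $\delta^{}_{-t} * \nu$ is the translate of $\nu$, one computes $\phi^{}_{c} (\delta^{}_{-t} * \nu) = (\nu * c)(t)$ for every $\nu \in \YY$. Hence the difference inside the integral equals $\bigl( (\omega^{}_{g^{}_{\alpha}} - \delta^{}_{\! \vL}) * c \bigr)(t)$. Writing $\sigma^{}_{\alpha} := \omega^{}_{g^{}_{\alpha}} - \delta^{}_{\! \vL}$, which is a positive measure by \eqref{eq:compare}, and using the pointwise bound $\lvert (\sigma^{}_{\alpha} * c)(t) \rvert \leqslant (\sigma^{}_{\alpha} * \lvert c \rvert)(t)$ with $\lvert c \rvert \in C_{\mathsf{c}} (G)$ positive, the claim reduces to making $\frac{1}{\vol (A_{n})} \int_{A_{n}} (\sigma^{}_{\alpha} * \lvert c \rvert)(t) \dd t$ smaller than $\varepsilon$ for all $\alpha \succ M$ and all $n > N$.

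The decisive step is to remove the dependence on $\alpha$ by monotonicity. Taking the net $(g^{}_{\alpha})$ to be decreasing, as is already used in the proof of Proposition~\ref{prop:amplitudes}, we have $g^{}_{\alpha} \leqslant g^{}_{M}$ and thus $0 \leqslant \sigma^{}_{\alpha} \leqslant \sigma^{}_{M}$ for all $\alpha \succ M$. Since $\lvert c \rvert \geqslant 0$, this gives $(\sigma^{}_{\alpha} * \lvert c \rvert)(t) \leqslant (\sigma^{}_{M} * \lvert c \rvert)(t)$ pointwise, so it suffices to bound the average of the \emph{single} measure $\sigma^{}_{M} * \lvert c \rvert$, after which one integer $N$ independent of $\alpha$ will do. I expect this monotonicity reduction to be the main obstacle: without it, the convergence $\omega^{}_{g^{}_{\alpha}} (A_{n})/\vol (A_{n}) \to \dens (\cL)\, \theta^{}_{\! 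H} (g^{}_{\alpha})$ is only available for each fixed $\alpha$, and one cannot extract a uniform $N$.

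Finally I would evaluate the remaining average by Fubini and the van Hove property. Writing $\int_{A_{n}} (\sigma^{}_{M} * \lvert c \rvert)(t) \dd t = \int F^{}_{n} (y) \dd \sigma^{}_{M} (y)$ with $F^{}_{n} (y) = \int_{A_{n}} \lvert c (t-y) \rvert \dd t$, one has $0 \leqslant F^{}_{n} \leqslant \| c \|^{}_{1}$ and $F^{}_{n}$ supported in $A_{n} - \supp (c)$, whence $\int F^{}_{n} \dd \sigma^{}_{M} \leqslant \| c \|^{}_{1}\, \sigma^{}_{M} (A_{n} - \supp (c))$. Choosing a compact $K$ with $- \supp (c) \subseteq K$, one has $(A_{n} - \supp (c)) \setminus A_{n} \subseteq \partial^{K}\! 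A_{n}$, so that $\sigma^{}_{M} (A_{n} - \supp (c)) \leqslant \sigma^{}_{M} (A_{n}) + \sigma^{}_{M} (\partial^{K}\! A_{n})$. Here $\sigma^{}_{M} (\partial^{K}\! A_{n})/\vol (A_{n}) \to 0$ by the van Hove property, using $\sigma^{}_{M} \leqslant \delta^{}_{\! \smoplam (K_{g})}$ together with its translation boundedness, while $\sigma^{}_{M} (A_{n})/\vol (A_{n}) \to \dens (\cL) \bigl( \theta^{}_{\! H} (g^{}_{M}) - \theta^{}_{\! H} (W) \bigr)$, by the mean of the norm-almost periodic measure $\omega^{}_{g^{}_{M}}$ combined with the maximal density of $\vL$. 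Since $\lim_{\alpha} \theta^{}_{\! H} (g^{}_{\alpha}) = \theta^{}_{\! H} (W)$, the latter limit is made arbitrarily small by choosing $M$ large first; fixing $M$ accordingly and then $N$ large enough to control both convergences yields the bound $\varepsilon$ and completes the argument.
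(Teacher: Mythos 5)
Your proof is correct and follows essentially the same route as the paper's: positivity of $\omega_{g_\alpha}-\delta_{\vL}$, Fubini to reduce the orbit average to $\bigl(\omega_{g_\alpha}-\delta_{\vL}\bigr)(A_n)/\vol(A_n)$ up to a boundary term killed by the van Hove property, and the maximal-density hypothesis to make that quantity small. Your explicit appeal to the monotonicity of the net $(g_\alpha)$ to obtain an $N$ independent of $\alpha$ is precisely the point the paper uses implicitly here (it is only spelled out later, in the proof of Proposition~\ref{prop:amplitudes}), and your treatment of general $c$ via $\lvert c\rvert$ merely replaces the paper's preliminary reduction to non-negative $c$.
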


\begin{proof}
  In view of the linearity of the mapping $c \mapsto \phi^{}_{c}$, it
  suffices to prove the claim for non-negative $c$ that are not
  identically zero, where $\| c \|^{}_{1} = \int_{G} \, c(t) \dd t >
  0$. The extension to general $c$ is then a standard $4\ts
  \varepsilon$-argument via splitting $c$ into its real and imaginary
  parts and writing a real-valued function as a difference of
  non-negative functions.

  From the proof of Proposition~\ref{prop:dens-versus-gamma}, in
  conjunction with Eq.~\eqref{eq:compare}, we know that, given
  $\varepsilon > 0$, there is an index $M$ and an integer $N$ such that
\begin{equation}\label{eq:esti-one}
    0 \, \leqslant \, \frac{ \bigl(
    \omega^{}_{g^{}_{\alpha}} - \delta^{}_{\! \vL} \bigr) (A_{n})}
    {\vol (A_{n})} \, < \,
    \frac{\varepsilon}{\ts 2 \, \| c \|^{}_{1}}
\end{equation}
holds for all $\alpha \succ M$ and $n>N$.

With the abbreviation $\nu^{}_{\alpha} := \omega^{}_{g^{}_{\alpha}}\!
- \delta^{}_{\! \vL}$, one has $0 \leqslant \phi^{}_{c}
(\delta^{}_{-t} * \omega^{}_{g^{}_{\alpha}}) - \phi^{}_{c}
(\delta^{}_{-t} * \delta^{}_{\! \vL}) = \bigl( \nu^{}_{\alpha} \nts *
c \bigr) (t)$. If $B = \supp (c)$, it is clear that the two functions
$(\nu^{}_{\alpha}\nts * c)|^{}_{A_{n}}$ and $\nu^{}_{\alpha,n}\nts *
c$ agree on the complement of the compact set $\partial^{B}\! A_{n}$,
where $\nu^{}_{\alpha,n} := \nu^{}_{\alpha}|^{}_{A_{n}}$.  Thus, for
each $\alpha$,
\[
      \lim_{n\to\infty}  \frac{1}{\vol (A_{n})}
      \int_{G} \bigl( (\nu^{}_{\alpha,n}\nts * c) (t)   -
      (\nu^{}_{\alpha}\nts * c)|^{}_{A_{n}} (t) \bigr)
      \dd t \, = \, 0
\]
as a consequence of the van Hove property of $\cA$. The limit is even
uniform in $\alpha$, because the absolute value of the integral, which
effectively only runs over the set $\partial^{B}\! A_{n}$, is bounded
by $2 \ts \| \delta^{}_{\! \smoplam (K_{g})} \|^{}_{B} \, \| c
\|^{}_{\infty} \, \vol ( \partial^{B}\! A_{n} )$ as a result of
Eq.~\eqref{eq:compare} in conjunction with Fact~\ref{fact:bounds}, the
latter applied with $\mu = \delta^{}_{0}$.  So, possibly after
adjusting $N\!$, we know that
\begin{equation}\label{eq:esti-two}
    \biggl|\frac{1}{\vol (A_{n})}
      \int_{G} \bigl( (\nu^{}_{\alpha,n}\nts * c) (t)   -
      (\nu^{}_{\alpha}\nts * c)|^{}_{A_{n}} (t) \bigr)
      \dd t\biggr| \, < \, 
     \frac{\varepsilon}{2}
\end{equation}
holds for all $n > N$ and all $\alpha$.

Now, since also $ \bigl(\nu^{}_{\alpha,n}\nts * c \bigr) (t) \geqslant
0$, one finds
\[
\begin{split}
     0 \, &  \leqslant \, \frac{1}{\vol (A_{n})} \int_{\nts A_{n}}
       \bigl( \phi^{}_{c} (\delta^{}_{-t} * \omega^{}_{g^{}_{\alpha}}) -
       \phi^{}_{c} (\delta^{}_{-t} * \delta^{}_{\! \vL}) \bigr)
       \dd t \\[1mm]
       & \leqslant \, \frac{1}{\vol (A_{n})} 
       \int_{\nts A_{n}} \bigl(\nu^{}_{\alpha,n}\nts * c \bigr) (t)
        \dd t \;  + \; \frac{\varepsilon}{2} \\[1mm]
      & =  \, \frac{1}{\vol (A_{n})} 
      \int_{G} \int_{G} 1^{}_{A_{n}} (t) \, c (t-y) \dd t
      \dd \nu^{}_{\alpha,n} (y) \;  + \; \frac{\varepsilon}{2} \\[1mm]
  & =  \, \frac{1}{\vol (A_{n})} \int_{G}
     \nu^{}_{\alpha,n} (A_{n} - t') \, c (t') \dd t'
       \;  + \; \frac{\varepsilon}{2} \\[1mm]
  & \leqslant \, \frac{ \nu^{}_{\alpha} (A_{n})}
    {\vol (A_{n})} \, \| c \|^{}_{1} 
    \; + \; \frac{\varepsilon}{2}
     \; < \; \varepsilon
\end{split}
\]       
where the second line follows from the inequality in
Eq.~\eqref{eq:esti-two}.  Fubini's theorem gives the ensuing identity,
while the next step results from setting $t' = t-y$ and applying
Fubini again. Since
\[
    0 \, \leqslant \,  \nu^{}_{\alpha,n} (A_{n} - t')
    \, \leqslant \, \nu^{}_{\alpha,n} (G)
    \, = \, \nu^{}_{\alpha} (A_{n}) \ts ,
\]
the last estimate is a consequence of Eq.~\eqref{eq:esti-one}.
\end{proof}

\begin{coro}\label{coro:orbit-average-one}
  For each\/ $c\in C_{\mathsf{c}} (G)$ and each\/ $\varepsilon > 0$,
  there exists some bound\/ $M$ and some integer\/ $N$ such that, for
  any\/ $h \in C_{\mathsf{u}} (G)$,
\[
     \frac{1}{\vol (A_{n})}  \int_{\nts A_{n}} \bigl|
     \bigl( \phi^{}_{c} (\delta^{}_{-t} * \omega^{}_{g^{}_{\alpha}}) -
     \phi^{}_{c} (\delta^{}_{-t} * \delta^{}_{\! \vL}) \bigr) \, h(t)
     \bigr| \dd t \, < \, \varepsilon\, \| h \|^{}_{\infty}
\]    
   holds for all\/ $\alpha \succ M$ and all\/ $n > N$.
\end{coro}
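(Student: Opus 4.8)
The plan is to deduce this directly from Lemma~\ref{lem:orbit-average-one}, since the only new feature is the bounded weight $h$, which enters multiplicatively and can simply be factored out. First, I would fix $c\in C_{\mathsf{c}} (G)$ and $\varepsilon > 0$ and invoke Lemma~\ref{lem:orbit-average-one} to obtain a bound $M$ and an integer $N$ such that
\[
     \frac{1}{\vol (A_{n})}  \int_{\nts A_{n}} \bigl|
     \phi^{}_{c} (\delta^{}_{-t} * \omega^{}_{g^{}_{\alpha}}) -
     \phi^{}_{c} (\delta^{}_{-t} * \delta^{}_{\! \vL})
     \bigr| \dd t  \, < \, \varepsilon
\]
holds for all $\alpha \succ M$ and all $n > N$. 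The key observation is that this pair $(M,N)$ depends only on $c$ and $\varepsilon$ and makes no reference to any weight function, so the very same $M$ and $N$ will serve uniformly for every $h$.

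Second, for any $h\in C_{\mathsf{u}} (G)$, I would use the pointwise factorisation of the integrand together with the trivial estimate $\lvert h(t)\rvert \leqslant \| h \|^{}_{\infty}$, which yields
\[
   \bigl| \bigl( \phi^{}_{c} (\delta^{}_{-t} * \omega^{}_{g^{}_{\alpha}}) -
   \phi^{}_{c} (\delta^{}_{-t} * \delta^{}_{\! \vL}) \bigr) \, h(t) \bigr|
   \, \leqslant \, \| h \|^{}_{\infty} \,
   \bigl| \phi^{}_{c} (\delta^{}_{-t} * \omega^{}_{g^{}_{\alpha}}) -
   \phi^{}_{c} (\delta^{}_{-t} * \delta^{}_{\! \vL}) \bigr|
\]
for all $t\in G$. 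Integrating this over $A_{n}$, dividing by $\vol (A_{n})$, and pulling the constant $\| h \|^{}_{\infty}$ out of the integral reduces the left-hand side to $\| h \|^{}_{\infty}$ times the averaged quantity already controlled by the lemma. For $\alpha \succ M$ and $n > N$ this is bounded above by $\varepsilon\, \| h \|^{}_{\infty}$, which is exactly the claimed estimate.

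Since the argument amounts to a monotone majorisation of the integrand, there is no genuine obstacle to overcome here: the corollary is in essence a reformulation of Lemma~\ref{lem:orbit-average-one} that merely records the uniformity of its thresholds with respect to the weight. Accordingly, the sole point worth emphasising — and the only thing one needs to verify — is precisely that uniformity, namely that $M$ and $N$ are produced independently of $h$, so that a single choice works simultaneously across the whole class $C_{\mathsf{u}} (G)$.
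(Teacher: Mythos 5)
Your argument is correct and coincides with the paper's own proof: both simply majorise the integrand by $\| h \|^{}_{\infty}$ times the unweighted difference and then invoke Lemma~\ref{lem:orbit-average-one}, noting that $M$ and $N$ are chosen independently of $h$. Nothing further is needed.
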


\begin{proof}
One clearly has the estimate
\[
\begin{split}
     \frac{1}{\vol (A_{n})}  \int_{\nts A_{n}} &
     \bigl| \bigl( \phi^{}_{c} (\delta^{}_{-t} * \omega^{}_{g^{}_{\alpha}}) -
     \phi^{}_{c} (\delta^{}_{-t} * \delta^{}_{\! \vL}) \bigr) \, h(t)
     \bigr| \dd t  \\[2mm]
     & \leqslant \, \frac{\| h \|^{}_{\infty}}{\vol (A_{n})}  
     \int_{\nts A_{n}} \bigl|
     \phi^{}_{c} (\delta^{}_{-t} * \omega^{}_{g^{}_{\alpha}}) -
     \phi^{}_{c} (\delta^{}_{-t} * \delta^{}_{\! \vL}) \bigr| 
     \dd t \ts ,
\end{split}
\]
where $\| h \|^{}_{\infty} < \infty$ by our assumptions. The claim now
follows from Lemma~\ref{lem:orbit-average-one}.
\end{proof}

The next step consists in extending the orbit average estimate to a
sufficiently large class of functions so that we can later apply the
Stone--Weierstrass theorem.  For this, we first need the algebra
generated by functions of type $\phi^{}_{c}$.  Since $\phi$ is a
linear map, we only need to extend Lemma~\ref{lem:orbit-average-one}
to products of such functions as follows.

\begin{prop}\label{prop:orbit-average}
  Let\/ $k\in\NN$ and choose arbitrary functions\/ $c^{}_{1}, \ldots ,
  c^{}_{k} \in C_{\mathsf{c}} (G)$ and\/ $\varepsilon > 0$. Then,
  there exists some bound\/ $M$ and some integer\/ $N$ such that
\[
     \frac{1}{\vol (A_{n})}  \int_{\nts A_{n}}
     \biggl| \, {\textstyle \prod\limits_{i=1}^{k}} \phi_{c^{}_{i}} 
     (\delta^{}_{-t} * \omega^{}_{g^{}_{\alpha}}) \, -
      {\textstyle \prod\limits_{j=1}^{k}} \phi_{c^{}_{j}} 
      (\delta^{}_{-t} * \delta^{}_{\! \vL})  \biggr|
     \dd t \, < \, \varepsilon
\]    
   holds for all\/ $\alpha \succ M$ and all\/ $n > N$.
\end{prop}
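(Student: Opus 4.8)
The plan is to deduce the product estimate from the single-factor estimate of Lemma~\ref{lem:orbit-average-one} by means of a telescoping identity, the only preparatory ingredient being a uniform bound on the individual factors. First I would observe that, for every $t\in G$ and every index $\alpha$, both measures $\delta^{}_{-t} * \omega^{}_{g^{}_{\alpha}}$ and $\delta^{}_{-t} * \delta^{}_{\!\vL}$ lie in the compact set $\YY$: translation leaves the seminorm $\|\cdot\|^{}_{K}$ invariant, and $0 \leqslant \delta^{}_{\!\vL} \leqslant \omega^{}_{g^{}_{\alpha}} \leqslant \delta^{}_{\!\smoplam (K_{g})}$ by Eq.~\eqref{eq:compare}. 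Since each $\phi_{c^{}_{i}}$ is a continuous function on the compact space $\YY$, it is bounded there; writing $|\phi_{c^{}_{i}} (\nu)| \leqslant D^{}_{i}$ for all $\nu\in\YY$ and setting $D := 1 + \max_{1\leqslant i \leqslant k} D^{}_{i}$, one has $D\geqslant 1$ and every factor occurring below is bounded in modulus by $D$, uniformly in $t$ and $\alpha$.

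Next I would apply the elementary telescoping identity, with the abbreviations $a^{}_{i} := \phi_{c^{}_{i}} (\delta^{}_{-t} * \omega^{}_{g^{}_{\alpha}})$ and $b^{}_{i} := \phi_{c^{}_{i}} (\delta^{}_{-t} * \delta^{}_{\!\vL})$,
\[
   \prod_{i=1}^{k} a^{}_{i} - \prod_{i=1}^{k} b^{}_{i}
   \, = \, \sum_{i=1}^{k} b^{}_{1} \cdots b^{}_{i-1}\,
   \bigl( a^{}_{i} - b^{}_{i} \bigr)\, a^{}_{i+1} \cdots a^{}_{k} \ts .
\]
Bounding each of the $k-1$ surviving factors in every summand by $D$ gives the pointwise estimate
\[
   \biggl| \, \prod_{i=1}^{k} \phi_{c^{}_{i}}
   (\delta^{}_{-t} * \omega^{}_{g^{}_{\alpha}}) -
   \prod_{j=1}^{k} \phi_{c^{}_{j}} (\delta^{}_{-t} * \delta^{}_{\!\vL})
   \biggr| \, \leqslant \, D^{k-1} \sum_{i=1}^{k}
   \bigl| \phi_{c^{}_{i}} (\delta^{}_{-t} * \omega^{}_{g^{}_{\alpha}}) -
   \phi_{c^{}_{i}} (\delta^{}_{-t} * \delta^{}_{\!\vL}) \bigr| \ts ,
\]
valid for all $t$ and $\alpha$. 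Integrating over $A^{}_{n}$ and dividing by $\vol (A^{}_{n})$, the triangle inequality bounds the product average by $D^{k-1}$ times the sum of the $k$ single-factor averages of Lemma~\ref{lem:orbit-average-one}.

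Finally, I would invoke Lemma~\ref{lem:orbit-average-one} once for each $c^{}_{i}$, now with tolerance $\varepsilon / (k\, D^{k-1})$ in place of $\varepsilon$, which produces indices $M^{}_{i}$ and integers $N^{}_{i}$. Choosing an index $M$ that dominates all the $M^{}_{i}$ (possible by directedness of the net) and setting $N := \max_{i} N^{}_{i}$, all $k$ single-factor averages are simultaneously below $\varepsilon / (k\, D^{k-1})$ for every $\alpha \succ M$ and $n > N$, so their $D^{k-1}$-weighted sum stays below $\varepsilon$, as claimed. I expect no genuine obstacle here beyond the bookkeeping; the one point that must not be skipped is the uniform boundedness of the factors, since it is precisely this that keeps the telescoping terms under control, and it rests on the compactness of $\YY$ together with the continuity of the maps $\phi_{c^{}_{i}}$.
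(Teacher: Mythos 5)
Your proposal is correct and follows essentially the same route as the paper: the paper proves the statement by induction on $k$ via the two-term product decomposition $gh - g'h' $, using the uniform bound $C=\max_i \sup_{\nu\in\YY}|\phi_{c_i}(\nu)|$ (finite by compactness of $\YY$ and continuity of the $\phi_{c_i}$) together with Lemma~\ref{lem:orbit-average-one} and Corollary~\ref{coro:orbit-average-one}, and your telescoping identity is just that induction unrolled. The one point you rightly flag --- the uniform boundedness of the factors along orbits, needed to control the surviving terms --- is exactly the ingredient the paper isolates as the constant $C$.
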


\begin{proof}
  For $k=1$, the claim is just Lemma~\ref{lem:orbit-average-one} and
  hence true.  Assume the claim to hold for some $k\in\NN$ and
  consider the case $k+1$.  For any $\nu \in \YY$ and $c\in
  C_{\mathsf{c}} (G)$, the function defined by $t \mapsto \phi^{}_{c}
  (\delta^{}_{-t} * \nu)$ is an element of $C_{\mathsf{u}} (G)$.  So,
  consider
\[
      0\, \leqslant \, C \, := \, 
      \max_{1 \leqslant i \leqslant k+1} \, \sup_{\nu\in\YY}\,
      \lvert \phi^{}_{c_{i}} (\nu) \rvert \, < \, \infty \ts ,
\]
which is an upper bound to the absolute value of $\phi^{}_{c_{i}}$
along any orbit in $\YY$.

Now, let $g=\prod_{i=1}^{k} \phi^{}_{c_{i}}$ and
$h=\phi^{}_{c_{k+1}}$.  Choose $M$ and $N$ such that
\[
   \frac{1}{\vol (A_{n})} \int_{\nts A_{n}} 
   \bigl| g(\delta^{}_{-t} * \omega^{}_{g^{}_{\alpha}}) -
   g(\delta^{}_{-t} * \delta^{}_{\! \vL}) \bigr|
   \dd t  \, < \,
   \frac{\varepsilon}{2\ts (C+1)}\;
\]
and
\[
   \frac{1}{\vol (A_{n})} \int_{\nts A_{n}} 
   \bigl| h(\delta^{}_{-t} * \omega^{}_{g^{}_{\alpha}}) -
   h(\delta^{}_{-t} * \delta^{}_{\! \vL})\bigr|
   \dd t   \, < \,
   \frac{\varepsilon}{2\ts (C +1)^{k}} 
\]
holds for all $\alpha \succ M$ and $n>N$, which is possible 
under our assumptions.

Then, we have
\[
\begin{split}
   \frac{1}{\vol (A_{n})} \int_{\nts A_{n}} &
   \bigl| g(\delta^{}_{-t} * \omega^{}_{g^{}_{\alpha}})\,
   h(\delta^{}_{-t} * \omega^{}_{g^{}_{\alpha}}) -
   g(\delta^{}_{-t} * \delta^{}_{\! \vL})\,
   h(\delta^{}_{-t} * \delta^{}_{\! \vL})\bigr|
   \dd t  \\[2mm]
   & \leqslant \,
   \frac{1}{\vol (A_{n})} \int_{\nts A_{n}} 
   \bigl| \bigl( g(\delta^{}_{-t} * \omega^{}_{g^{}_{\alpha}}) - 
   g(\delta^{}_{-t} * \delta^{}_{\! \vL}) \bigr) \,
   h(\delta^{}_{-t} * \delta^{}_{\! \vL}) \bigr|
   \dd t  \\[2mm]
   & \quad + \frac{1}{\vol (A_{n})} \int_{\nts A_{n}} 
   \bigl| \bigl( h(\delta^{}_{-t} * \omega^{}_{g^{}_{\alpha}}) - 
   h(\delta^{}_{-t} * \delta^{}_{\! \vL})\bigr) \,
   g(\delta^{}_{-t} * \delta^{}_{\! \vL}) \bigr|
   \dd t \, < \,  \varepsilon
\end{split}
\]
by our previous assumptions. Here, the second term is estimated by an
application of Corollary~\ref{coro:orbit-average-one}, with
$\sup_{t\in G} \lvert g (\delta^{}_{\! \vL-t} )\rvert \leqslant C^{k}$,
while the estimate of the first term works as in the proof of the same
corollary.
\end{proof}

At this point, we may consider the algebra $\AAA$ of continuous
functions on $\YY$ that is generated by the functions $\phi^{}_{c}$
with arbitrary $c\in C_{\mathsf{c}} (G)$ together with the constant
function $1$.  This algebra is dense in $C (\YY)$ by the
Stone--Weierstrass theorem.  We thus have a suitable algebra of
functions at our disposal to assess equality of probability measures
on $\YY$. Moreover, via Proposition~\ref{prop:orbit-average}, we will
be able to assess ergodicity properties as well. In our present
context, it would suffice to consider real-valued functions, as all
our measures will be positive or signed. Nevertheless, we will discuss
the general case of complex-valued functions, as this causes no extra
complications.

To continue, observe that we have a net $(\mu^{}_{\alpha})$ of
invariant probability measures on $\YY$, which is compact.  There is a
converging subnet which defines a measure $\mu$ that is also
$G$-invariant. Our next step will be to show that this measure is
ergodic and that the net itself converges to $\mu$, so $\mu$ is
unique.

\begin{prop}\label{prop:Cauchy}
  Consider the net\/ $(\mu^{}_{\alpha})$ of ergodic\/ $G$-invariant
  probability measures on\/ $\YY$. Then, for all\/ $c^{}_{1}, \ldots ,
  c^{}_{k} \in C_{\mathsf{c}} (G)$, the net\/ $\bigl(\mu^{}_{\alpha}
  (\phi^{}_{c^{}_{1}} \! \nts \cdot \ldots \cdot
  \phi^{}_{c^{}_{k}})\bigr)$ is a Cauchy net and hence convergent.
  Moreover, the limit satisfies
\[
     \lim_{\alpha} \mu^{}_{\alpha} \biggl(\,
     {\textstyle \prod\limits_{i=1}^{k}} \phi^{}_{c_{i}} \biggr)
     \; = \, \lim_{n\to\infty} \frac{1}{\vol (A_{n})}
     \int_{\nts A_{n}}\, {\textstyle \prod\limits_{i=1}^{k}}
     \phi^{}_{c_{i}} 
     (\delta^{}_{\! \vL - t})\, \dd t \ts .
\]   
\end{prop}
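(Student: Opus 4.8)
The plan is to exchange two limits --- the net limit in $\alpha$ and the van~Hove limit in $n$ --- by exploiting the uniformity in $\alpha$ furnished by Proposition~\ref{prop:orbit-average}. Throughout, abbreviate $P := \prod_{i=1}^{k} \phi^{}_{c_i}$, which lies in the algebra $\AAA \subseteq C(\YY)$, and set
\[
   A_\alpha^{(n)} \, := \, \frac{1}{\vol (A_n)} \int_{\nts A_n}
      P(\delta^{}_{-t} * \omega^{}_{g^{}_{\alpha}}) \dd t
   \qquad \text{and} \qquad
   I_n \, := \, \frac{1}{\vol (A_n)} \int_{\nts A_n}
      P(\delta^{}_{-t} * \delta^{}_{\! \vL}) \dd t \ts ,
\]
so that $\delta^{}_{-t} * \delta^{}_{\! \vL} = \delta^{}_{\vL - t}$ and the right-hand side of the claimed identity is $\lim_{n\to\infty} I_n$, once that limit is shown to exist. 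By Fact~\ref{fact:ergodic}, each system $(\XX^{}_{\alpha}, G, \mu^{}_{\alpha})$ is uniquely ergodic with $\omega^{}_{g^{}_{\alpha}}$ generic for $\mu^{}_{\alpha}$, so that, for each fixed $\alpha$,
\[
   \mu^{}_{\alpha} (P) \, = \, \lim_{n\to\infty} A_\alpha^{(n)} \ts .
\]

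Next I would invoke Proposition~\ref{prop:orbit-average}: moving the modulus inside the integral, it yields, for every $\varepsilon > 0$, a bound $M$ and an integer $N$ with $\lvert A_\alpha^{(n)} - I_n \rvert < \varepsilon$ for all $\alpha \succ M$ and all $n > N$. This single, $\alpha$-uniform estimate does the real work. From it, $(I_n)$ is Cauchy: for $m, n > N$ and any fixed $\alpha \succ M$, the triangle inequality through the intermediary $\mu^{}_{\alpha}(P)$ gives
\[
   \lvert I_m - I_n \rvert \, \leqslant \,
   \lvert I_m - A_\alpha^{(m)} \rvert + \lvert A_\alpha^{(m)} - \mu^{}_{\alpha}(P) \rvert
   + \lvert \mu^{}_{\alpha}(P) - A_\alpha^{(n)} \rvert + \lvert A_\alpha^{(n)} - I_n \rvert \ts ,
\]
where the two outer terms lie below $\varepsilon$ by Proposition~\ref{prop:orbit-average} and the two inner terms vanish as $m, n \to \infty$ by the genericity identity; hence $I := \lim_{n} I_n$ exists. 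An entirely analogous bookkeeping, now fixing $\alpha, \beta \succ M$, bounding $\mu^{}_{\alpha}(P) - \mu^{}_{\beta}(P)$ through the intermediaries $A_\alpha^{(n)}$, $I_n$, $A_\beta^{(n)}$, and letting $n \to \infty$ (so that $A_\alpha^{(n)} \to \mu^{}_{\alpha}(P)$ and $A_\beta^{(n)} \to \mu^{}_{\beta}(P)$, while $\lvert A_\alpha^{(n)} - I_n \rvert$ and $\lvert I_n - A_\beta^{(n)} \rvert$ stay below $\varepsilon$), yields $\lvert \mu^{}_{\alpha}(P) - \mu^{}_{\beta}(P) \rvert \leqslant 2\ts\varepsilon$; thus the net $(\mu^{}_{\alpha}(P))$ is Cauchy and, by completeness, convergent.

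It then remains to identify $\lim_{\alpha} \mu^{}_{\alpha}(P)$ with $I$. I would estimate their difference by inserting $\mu^{}_{\alpha}(P)$, $A_\alpha^{(n)}$ and $I_n$: picking $\alpha \succ M$ with $\lvert \lim_{\beta} \mu^{}_{\beta}(P) - \mu^{}_{\alpha}(P) \rvert < \varepsilon$ and then $n > N$ large enough that $\lvert \mu^{}_{\alpha}(P) - A_\alpha^{(n)} \rvert < \varepsilon$ and $\lvert I_n - I \rvert < \varepsilon$, while $\lvert A_\alpha^{(n)} - I_n \rvert < \varepsilon$ holds automatically, gives a bound of $4\ts\varepsilon$; letting $\varepsilon \to 0$ forces equality. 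The two points to watch are that $P$ genuinely lies in $C(\YY)$, so that genericity applies --- which is precisely why Proposition~\ref{prop:orbit-average} was established for products of the $\phi^{}_{c_i}$ rather than for a single factor --- and that the modulus may be pulled inside the integral without loss; both are routine. The main difficulty is conceptual rather than computational: the statement is an interchange of iterated limits, and it is only the uniformity in $\alpha$ of Proposition~\ref{prop:orbit-average} that licenses this exchange, playing the role of the uniform-convergence hypothesis in a Moore--Osgood type argument.
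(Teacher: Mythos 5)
Your argument is correct and follows essentially the same route as the paper: unique ergodicity of $(\XX^{}_{\alpha},G,\mu^{}_{\alpha})$ gives $\mu^{}_{\alpha}(P)=\lim_{n}A_{\alpha}^{(n)}$, and the $\alpha$-uniform estimate of Proposition~\ref{prop:orbit-average} then drives the Cauchy and limit-identification arguments via the triangle inequality. You have simply written out the $3\ts\varepsilon$/$4\ts\varepsilon$ bookkeeping that the paper leaves to the reader, and your quantifier order (fix $M,N$ from Proposition~\ref{prop:orbit-average}, then a single $\alpha\succ M$, then $n$ large) is the right one.
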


\begin{proof}
  Since $(\XX^{}_{\alpha},G,\mu^{}_{\alpha})$ is uniquely ergodic by
  Fact~\ref{fact:ergodic}, we have
\begin{equation}\label{eq:ergodic}
   \lim_{n\to\infty} \frac{1}{\vol (A_{n})}
     \int_{\nts A_{n}}\, {\textstyle \prod\limits_{i=1}^{k}}
     \phi^{}_{c_{i}} 
     (\delta^{}_{-t} * \omega^{}_{g^{}_{\alpha}})\, 
     \dd t \, = \, \mu^{}_{\alpha} \biggl(\,
     {\textstyle \prod\limits_{i=1}^{k}} \phi^{}_{c_{i}} \biggr)
\end{equation}
by the stronger version of Birkhoff's ergodic theorem for the orbit
average of a continuous function. This holds for any $\alpha$.

For a suitable index $M$ and arbitrary $\alpha, \alpha' \succ M$, we
can now estimate the difference $\big\lvert \mu^{}_{\alpha} \bigl(
\prod_{i=1}^{k} \, \phi^{}_{c_{i}}\bigr) - \mu^{}_{\alpha'} \bigl(
\prod_{i=1}^{k} \, \phi^{}_{c_{i}}\bigr) \big\rvert$ by means of a
$4\ts \varepsilon$-argument on the basis of
Proposition~\ref{prop:orbit-average} and Eq.~\eqref{eq:ergodic}. This
establishes the Cauchy property by standard arguments.

The second claim is another $3\ts \varepsilon$-argument of a similar
kind, again using Eq.~\eqref{eq:ergodic} and
Proposition~\ref{prop:orbit-average}. We leave the details to the
reader.
\end{proof}

\begin{theorem}\label{thm:generic}
  The net\/ $(\mu^{}_{\alpha})$ of ergodic, $G$-invariant probability
  measures from Proposition~$\ref{prop:Cauchy}$ converges, and the
  limit, $\mu$ say, is an ergodic, $G$-invariant probability measure
  on\/ $\YY$. Moreover, our weak model set\/ $\vL = \oplam (W)$ of
  maximal density is generic for\/ $\mu$.
\end{theorem}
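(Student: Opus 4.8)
The plan is to establish three things about the net $(\mu^{}_{\alpha})$: convergence to a limit $\mu$, that $\mu$ is a $G$-invariant probability measure, and that $\mu$ is ergodic with $\vL$ generic for it. The groundwork has essentially been laid by Proposition~\ref{prop:Cauchy}, so the proof is mostly an assembly of the pieces, and the main work is to promote the scalar Cauchy statements into a genuine statement about measures.

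First I would argue convergence. By Proposition~\ref{prop:Cauchy}, for every choice of $c^{}_{1},\ldots,c^{}_{k}\in C_{\mathsf{c}}(G)$, the net $\bigl(\mu^{}_{\alpha}(\prod_{i}\phi^{}_{c_{i}})\bigr)$ is Cauchy and hence convergent. By linearity of $\phi$ and of the integral, this means that $\mu^{}_{\alpha}(F)$ converges for every $F$ in the algebra $\AAA$ generated by the $\phi^{}_{c}$ together with the constant function $1$. Since $\AAA$ is dense in $C(\YY)$ by the Stone--Weierstrass theorem, and since the $\mu^{}_{\alpha}$ are probability measures (hence have uniformly bounded total mass), a standard $3\ts\varepsilon$-argument extends convergence of $\mu^{}_{\alpha}(F)$ to all $F\in C(\YY)$. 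Thus $\mu^{}_{\alpha}\to\mu$ in the weak-$*$ topology for a uniquely determined limit functional $\mu$, which the Riesz--Markov theorem identifies with a regular Borel measure on the compact space $\YY$.

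Next I would verify that $\mu$ is a $G$-invariant probability measure. Positivity and the normalisation $\mu(\YY)=1$ pass to the weak-$*$ limit because each $\mu^{}_{\alpha}$ is a probability measure and $\YY$ is compact. $G$-invariance follows since each $\mu^{}_{\alpha}$ is $G$-invariant and the $G$-action on $C(\YY)$ is by isometries: for $F\in C(\YY)$ and $s\in G$, $\mu(F\circ T_{s})=\lim_{\alpha}\mu^{}_{\alpha}(F\circ T_{s})=\lim_{\alpha}\mu^{}_{\alpha}(F)=\mu(F)$. Genericity of $\vL$ for $\mu$ is exactly the content of the second displayed equation in Proposition~\ref{prop:Cauchy}, which identifies the limit $\mu^{}_{\alpha}(\prod_{i}\phi^{}_{c_{i}})=\mu(\prod_{i}\phi^{}_{c_{i}})$ with the orbit average $\lim_{n}\frac{1}{\vol(A_{n})}\int_{A_{n}}\prod_{i}\phi^{}_{c_{i}}(\delta^{}_{\!\vL-t})\dd t$ along the orbit of $\vL$; density of $\AAA$ in $C(\YY)$ upgrades this to all continuous $F$, which is precisely the statement that $\vL$ is generic for $\mu$.

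The hard part will be ergodicity, and this is where I would concentrate the effort. The natural route is to show that the orbit average of every $F\in C(\YY)$ along the orbit of $\vL$ converges to the constant $\mu(F)$ for $\mu$-almost every point, which by genericity of $\vL$ and a density argument forces $\mu$ to be ergodic. Concretely, one wants to verify that the variance $\mu\bigl((F-\mu(F))^{2}\bigr)$ vanishes, or equivalently that the time-averages of $F$ are $\mu$-a.e.\ constant. Here the key leverage is that each $\mu^{}_{\alpha}$ is itself ergodic (indeed strictly ergodic) by Fact~\ref{fact:ergodic}, and Proposition~\ref{prop:orbit-average} controls how the orbit averages along $\omega^{}_{g^{}_{\alpha}}$ and along $\vL$ differ in an $L^{1}(A_{n})$-sense uniformly. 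The subtle point is that ergodicity is generally \emph{not} preserved under weak-$*$ limits of ergodic measures, so one cannot simply pass to the limit; instead one must exploit the simultaneous control of products $\prod_{i}\phi^{}_{c_{i}}$ furnished by Proposition~\ref{prop:orbit-average} to show that, for the specific limit $\mu$, the self-correlations factorise, i.e.\ $\mu(\phi^{}_{c}\cdot\phi^{}_{d})$ coincides with the value dictated by the almost-everywhere-constant time average. I expect this factorisation-to-ergodicity step, tying the uniform product estimates back to a genuine a.e.\ statement against $\mu$, to be the genuine obstacle, with the remaining items being routine limit-passage.
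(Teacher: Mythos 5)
Your treatment of convergence, $G$-invariance, and genericity is essentially the paper's argument: Proposition~\ref{prop:Cauchy} pins down $\mu^{}_{\alpha}(F)$ for all $F$ in the algebra $\AAA$, Stone--Weierstrass plus the uniform mass bound upgrades this to all of $C(\YY)$, invariance and normalisation pass to the vague limit, and the second display of Proposition~\ref{prop:Cauchy} is read off as genericity of $\vL$. (The paper phrases the convergence via uniqueness of subnet limits rather than a direct $3\ts\varepsilon$-argument, but that is cosmetic.) So far there is nothing to object to.

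The gap is ergodicity: you correctly identify it as the hard part, correctly observe that ergodicity is not preserved under vague limits in general, sketch a plausible strategy (factorisation of correlations $\mu(\phi^{}_{c}\cdot(\phi^{}_{d}\circ T_{s}))$ after averaging over $s$, leveraging the uniform product estimates of Proposition~\ref{prop:orbit-average} and the strict ergodicity of each $(\XX^{}_{\alpha},G,\mu^{}_{\alpha})$ from Fact~\ref{fact:ergodic}) --- and then stop, explicitly deferring the ``genuine obstacle''. A proof proposal that names its own missing step is not a proof, so as submitted the ergodicity claim is unestablished. It is worth saying, though, that the paper itself disposes of this point in a single sentence (``as a vague limit of ergodic measures, it is ergodic as well''), which is exactly the general principle you rightly distrust; the structure that makes it work here is the special nature of the approximants --- each $\mu^{}_{\alpha}$ is the unique invariant (Haar-type) measure of a system conjugate to a compact group rotation, and the convergence of correlations is controlled \emph{uniformly} along the orbit of $\vL$ by Proposition~\ref{prop:orbit-average}, which is what permits the interchange of the limit in $\alpha$ with the time average in the mean-ergodic-theorem criterion $\lim_{m}\frac{1}{\vol(A_{m})}\int_{A_{m}}\mu\bigl(F\cdot(H\circ T_{s})\bigr)\dd s=\mu(F)\,\mu(H)$. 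Your plan points at precisely this, but to count as a proof you would need to carry out that interchange explicitly rather than flag it as an expectation.
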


\begin{proof}
  Let $\mu$ be the limit of a fixed subnet of $(\mu^{}_{\alpha})$.
  Since all $\mu^{}_{\alpha}$ are $G$-invariant probability measures
  on $\YY$ and this property is preserved under vague limits, $\mu$ is
  a $G$-invariant probability measure as well. Via
  Proposition~\ref{prop:Cauchy}, we know the evaluation of $\mu$ on
  all elements of $\AAA$, which is dense in $C (\YY)$ and thus
  determines $\mu$ completely.

  As a consequence, the limit $\mu'$ of any other convergent subnet of
  $(\mu^{}_{\alpha})$ must agree with $\mu$ on $\AAA$, whence $\mu =
  \mu'$, and our original net $(\mu^{}_{\alpha})$ is convergent, with
  limit $\mu$. Our construction thus determines a unique measure $\mu$
  on $\YY$. As a vague limit of ergodic measures, it is ergodic as
  well.

For all $f\in\AAA$, we also know from Proposition~\ref{prop:Cauchy}
that
\begin{equation}\label{eq:patch}
    \mu (f) \, = \lim_{n\to\infty}
    \frac{1}{\vol (A_{n})} 
    \int_{\nts A_{n}} f(\delta^{}_{\! \vL - t})
    \dd t \ts ,
\end{equation}
whence this also holds for all $f\in C(\YY)$. Consequently, 
$\vL$ is generic for $\mu$, which completes our argument.
\end{proof}

\begin{remark}
  Since the measure $\mu$ constructed above is a regular Borel
  measure, we can use Eq.~\eqref{eq:patch} to give a nice geometric
  interpretation for $\mu$. As our weak model set $\vL$ of maximal
  density is a point set of finite local complexity, $\mu$ induces a
  unique probability measure $\mu^{}_{0}$ the discrete hull $\XX_{0}
  := \{ \vL' \in \XX_{\vL} \mid 0 \in \vL' \}$ via a standard
  filtration. Now, $\mu^{}_{0}$ is specified by its values on the
  cylinder sets $Z_{K,\cP} = \{ \vL' \in \XX_{0} \mid \vL' \cap K =
  \cP \}$, where $K\subseteq G$ is compact and $\cP$ a $K$-cluster of
  $\vL$. An inspection of Eq.~\eqref{eq:patch} reveals that the
  measure of $Z_{K,\cP}$ is nothing but the cluster frequency of
  $\cP$, defined with respect to the van Hove sequence $\cA$. So, our
  measure $\mu$ is the \emph{cluster} (or patch) \emph{frequency
    measure} for $\vL$ relative to $\cA$.  \exend
\end{remark}

Our approach started with an individual weak model set $\vL$ of
maximal density, which is then pure point diffractive by
Theorem~\ref{thm:weak-pp}.  Now, we also have a measure-theoretic
dynamical system $(\XX^{}_{\vL}, G, \mu)$ with an ergodic measure
$\mu$ as constructed above.  Relative to the van Hove sequence $\cA$,
it is the cluster frequency measure.

Moreover, our weak model set of maximal density is generic for this
measure $\mu$ by Theorem~\ref{thm:generic}, so we know that the
individual autocorrelation $\gamma^{}_{\! \vL}$ of $\vL$ is also the
autocorrelation of the dynamical system, and its Fourier transform,
$\widehat{\ts \gamma^{}_{\! \vL}\ts }$, is the diffraction measure
both of $\vL$ and of our dynamical system \cite{BL}. Note that the
equivalence theorem only needs genericity, but not ergodicity, though
the possible statements on the diffraction of a given element of the
hull is then even weaker.

By the general equivalence theorem between diffraction and dynamical
spectrum in the pure point situation \cite{BL,BLvE}, we thus have the
following consequence.

\begin{coro}
  Let\/ $\vL$ be a weak model set of maximal density, relative to a
  fixed van Hove averaging sequence\/ $\cA$, for a CPS\/ $(G,H,\cL)$
  as above. Then, $\vL$ is pure point diffractive and the dynamical
  system\/ $(\XX^{}_{\vL}, G, \mu)$ with the measure\/ $\mu$ from
  Theorem~$\ref{thm:generic}$ has pure point dynamical spectrum.  \qed
\end{coro}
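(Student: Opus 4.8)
The plan is to synthesise the diffraction result of Theorem~\ref{thm:weak-pp} with the genericity and ergodicity furnished by Theorem~\ref{thm:generic}, and then to invoke the general equivalence theorem between pure point diffraction and pure point dynamical spectrum. Concretely, I would first note that $\vL$, being a weak model set of maximal density, falls directly under Theorem~\ref{thm:weak-pp}. That theorem yields that the autocorrelation $\gamma^{}_{\! \vL}$ is a strongly almost periodic pure point measure and that its Fourier transform $\widehat{\ts\gamma^{}_{\! \vL}\ts}$ is a translation bounded, positive, pure point measure on $\widehat{G}$. This is exactly the assertion that $\vL$ is pure point diffractive, so the first half of the corollary follows at once.

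Next, I would bring in Theorem~\ref{thm:generic}, which provides the ergodic, $G$-invariant probability measure $\mu$ on $\YY$ (and hence on $\XX^{}_{\vL}$) together with the key fact that $\vL$ is generic for $\mu$. Genericity is the decisive bridge from the individual point set to the dynamical system: in the sense of \cite{BL,BLvE}, it ensures that the diffraction measure of the measure-theoretic dynamical system $(\XX^{}_{\vL}, G, \mu)$ coincides with the individual diffraction measure $\widehat{\ts\gamma^{}_{\! \vL}\ts}$ of $\vL$. In particular, the system $(\XX^{}_{\vL}, G, \mu)$ inherits pure point diffraction.

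I would then apply the general equivalence theorem \cite[Thm.~7]{BL} (see also \cite{BLvE}), according to which the dynamical spectrum of such a system is pure point precisely when its diffraction measure is pure point. Having just established that the diffraction is pure point, I conclude that $(\XX^{}_{\vL}, G, \mu)$ has pure point dynamical spectrum, which is the second half of the claim.

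I expect no genuine obstacle, since the corollary is essentially a clean concatenation of the two theorems with the equivalence result. The only point needing care is checking that the hypotheses of the equivalence theorem are in place --- that $(\XX^{}_{\vL}, G, \mu)$ is a bona fide measure-theoretic dynamical system of translation bounded measures with $\vL$ generic --- but all of these were secured in the preceding construction, with genericity being the ingredient that does the real work.
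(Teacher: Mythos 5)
Your proposal is correct and follows exactly the route the paper takes: pure point diffractivity from Theorem~\ref{thm:weak-pp}, genericity and ergodicity of $\mu$ from Theorem~\ref{thm:generic} to identify the individual diffraction with that of the dynamical system, and the equivalence theorem of \cite{BL,BLvE} to pass to pure point dynamical spectrum. The paper even makes the same remark you do, that genericity (rather than ergodicity) is the ingredient doing the real work here.
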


\begin{remark}\label{rem:spec}
  If the CPS is irredundant in the sense of \cite{Martin}, see also
  \cite{BLM}, we can also immediately give the dynamical spectrum,
  which (in additive notation) is $L^{0}=\pi (\cL^{0})$, with the
  lattice $\cL^{0}$ from the dual CPS in Eq.~\eqref{eq:dual-candp}.
  \exend
\end{remark}

As in Section~\ref{sec:diffraction}, general statements seem difficult
when the maximality condition for $\dens (\vL)$ is violated. Examples
for the possible complications can once again be taken from the family
of Toeplitz sequences, viewed as (weak) model sets with proper windows
\cite{BLO}. Still, repeating our above analysis for weak model sets of
minimal density, one obtains the following analogous result.

\begin{coro}\label{coro:dyn-spec}
  Let\/ $\vL$ be a weak model set of minimal density, relative to a
  fixed van Hove averaging sequence\/ $\cA$, for a CPS\/ $(G,H,\cL)$
  as above. Then, the autocorrelation of\/ $\vL$ relative to\/ $\cA$
  exists, and $\vL$ is pure point diffractive. Moreover, the dynamical
  system\/ $(\XX^{}_{\vL}, G, \mu)$, where\/ $\mu$ is the cluster
  frequency measure relative to\/ $\cA$, has pure point dynamical
  spectrum. The spectrum can be calculated as in
  Remark~$\ref{rem:spec}$. \qed
\end{coro}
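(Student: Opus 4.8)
The existence of the autocorrelation $\gamma^{}_{\! \vL}$ and the pure point nature of $\widehat{\ts\gamma^{}_{\! \vL}\ts}$ are already supplied by Theorem~\ref{thm:min-pp}, so the only genuinely new task is the dynamical spectrum. The plan is to run the entire construction of Section~\ref{sec:hull} once more, with the approximation from above replaced by one from below, exactly as announced before Theorem~\ref{thm:min-pp}. Concretely, I would fix a net $(h^{}_{\alpha})$ of $[0,1]$-valued functions in $C_{\mathsf{c}} (H)$ with supports contained in the fixed compact set $\,\overline{\! W\!}\,$ and $h^{}_{\alpha} \nearrow 1^{}_{W^{\circ}}$, which is possible by inner regularity of $\theta^{}_{\! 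H}$ together with Urysohn's lemma, so that $\lim_{\alpha} \theta^{}_{\! H} (h^{}_{\alpha}) = \theta^{}_{\! H} (W^{\circ})$ and, as an inequality between pure point measures,
\[
  0 \,\leqslant\, \omega^{}_{h^{}_{\alpha}} \,\leqslant\, \delta^{}_{\! \smoplam (W^{\circ})} \,\leqslant\, \delta^{}_{\! \vL} \,\leqslant\, \delta^{}_{\! \smoplam (\,\overline{\! W\!}\,)} \ts .
\]
Since $W$ is relatively compact, $\oplam (\,\overline{\! W\!}\,)$ is a Meyer set, and its Dirac comb provides the translation-bounded dominating measure that defines the common compact space $\YY$, now playing the role of $\delta^{}_{\! \smoplam (K_{g})}$ in Eq.~\eqref{eq:compare}. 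Each $\omega^{}_{h^{}_{\alpha}}$ is norm-almost periodic, so Fact~\ref{fact:ergodic} applies and yields a strictly ergodic hull $\XX^{}_{\alpha}$ with unique invariant probability measure $\mu^{}_{\alpha}$ and pure point spectrum.

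The heart of the matter is to re-establish the orbit-average estimates with $\omega^{}_{g^{}_{\alpha}}$ replaced by $\omega^{}_{h^{}_{\alpha}}$. The only genuinely new input is the one-sided density bound driving Lemma~\ref{lem:orbit-average-one}: because $\omega^{}_{h^{}_{\alpha}}$ has mean $\dens (\cL)\, \theta^{}_{\! H} (h^{}_{\alpha})$, while the minimality hypothesis gives $\dens (\vL) = \dens (\cL)\, \theta^{}_{\! H} (W^{\circ})$, one obtains for every $\varepsilon > 0$ some index $M$ and some integer $N$ with
\[
  0 \,\leqslant\, \frac{\bigl( \delta^{}_{\! \vL} - \omega^{}_{h^{}_{\alpha}} \bigr) (A_{n})}{\vol (A_{n})} \,<\, \varepsilon
\]
for all $\alpha \succ M$ and $n > N$, the correct sign now being guaranteed by $\omega^{}_{h^{}_{\alpha}} \leqslant \delta^{}_{\! \vL}$. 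This is precisely the mirror image of Eq.~\eqref{eq:esti-one}, and with it the proofs of Lemma~\ref{lem:orbit-average-one}, Corollary~\ref{coro:orbit-average-one} and Proposition~\ref{prop:orbit-average} go through verbatim, only with the roles of the two combs interchanged.

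From here the remaining steps are purely formal. Proposition~\ref{prop:Cauchy} and Theorem~\ref{thm:generic} apply unchanged to the net $(\mu^{}_{\alpha})$ of unique ergodic measures, showing that it converges to an ergodic, $G$-invariant probability measure $\mu$ on $\YY$ for which $\vL$ is generic; by the same interpretation as in the maximal case, $\mu$ is the cluster frequency measure of $\vL$ relative to $\cA$. Since $\vL$ is generic for $\mu$ and pure point diffractive by Theorem~\ref{thm:min-pp}, the general equivalence theorem \cite{BL,BLvE} then yields that $(\XX^{}_{\vL}, G, \mu)$ has pure point dynamical spectrum, and under irredundancy the spectrum equals $L^{0} = \pi (\cL^{\ts 0})$ as in Remark~\ref{rem:spec}. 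I expect the main obstacle to be confined entirely to the single sign-flipped density estimate above; once the inequality $\omega^{}_{h^{}_{\alpha}} \leqslant \delta^{}_{\! \vL}$ is used in place of $\delta^{}_{\! \vL} \leqslant \omega^{}_{g^{}_{\alpha}}$, the whole machinery of Section~\ref{sec:hull} transfers with no further changes.
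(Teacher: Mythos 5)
Your proposal is correct and follows exactly the route the paper intends: the statement is given with only the remark that one ``repeats the above analysis'' from below, and you flesh out precisely that repetition, correctly isolating the single non-trivial modification, namely the sign-flipped density estimate $0 \leqslant \bigl(\delta^{}_{\! \vL} - \omega^{}_{h^{}_{\alpha}}\bigr)(A_{n})/\vol (A_{n}) < \varepsilon$ coming from $\omega^{}_{h^{}_{\alpha}} \leqslant \delta^{}_{\! \vL}$ together with the minimality hypothesis. The remaining transfer of Lemma~\ref{lem:orbit-average-one}, Proposition~\ref{prop:orbit-average}, Proposition~\ref{prop:Cauchy} and Theorem~\ref{thm:generic}, followed by the equivalence theorem, is exactly as the paper envisages.
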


\begin{remark}
  It is important to note that Corollary~\ref{coro:dyn-spec} is a
  result on the measure-theoretical eigenvalues. It may indeed happen
  (as in the visible lattice points of Section~\ref{sec:visible} and
  their arithmetic generalisations) that the topological point
  spectrum is trivial. A difference between topological and
  measure-theoretic spectrum is also well-known and studied in the
  theory of Toeplitz sequences (see \cite{DFM} and references
  therein), which can be described as weak model sets as well
  \cite{BLO}.  \exend
\end{remark}

Let us turn our attention to a versatile class of point sets that
comprise the arithmetic example of Section~\ref{sec:visible} as well
as its various generalisations.

\section{Application to coprime sublattice 
families}\label{sec:families}

Given a lattice $\vG \subset \RR^{d}$, we consider a countable
family of proper sublattices $(\vG_{n})_{n\in\NN}$ with the
coprimality property that
\begin{equation}\label{eq:simple-coprime}
    \vG_{i} + \vG_{j} \, = \, \vG
\end{equation}    
holds for all $i\ne j$. In fact, with $\vG^{}_{\! F} := \bigcap_{n\in
  F} \vG_{n}$ for $F\subset \NN$ finite and $\vG^{}_{\! \varnothing}
:= \vG$, we further assume the validity of
\begin{equation}\label{eq:lat-sections}
     \vG^{}_{\! F} + \vG^{}_{\! F'} \, = \, \vG^{}_{\! F\cap F'}
\end{equation}
for all finite $F,F'\subset \NN$, which represents some general
gcd-law of our lattice family. Finally, we assume the (absolute)
convergence condition
\begin{equation}\label{eq:abs-conv}
     \sum_{n\in\NN} \frac{1}{[\vG : \vG_{n}]} \, < \, \infty \ts .
\end{equation}
We call such a system a \emph{coprime sublattice family}, which, by
definition, is infinite.  Similar to the case of the visible lattice
points, this setting gives rise to the set
\begin{equation}\label{eq:def-coprime}
    V \, = \, \vG\setminus\textstyle{\bigcup_{n\in \NN}} \, \vG_n .
\end{equation}
Let us note in passing that a \emph{finite} family would simply result
in a crystallographic (or fully periodic) point set, hence not to a
situation outside the class of regular model sets. This is not of
interest to us here.

The coprimality condition clearly implies the Chinese remainder
theorem for pairwise coprime sublattices, hence
\[
    \vG / \vG^{}_{\! F} \, = \,
    \vG \big/ {\textstyle \bigcap_{n\in F}} \, \vG_n
    \,\simeq\ts \prod_{n\in F} \vG/\vG_n
\]
for any finite subset $F\subset \NN$. In particular, one has the index
formula
\begin{equation}\label{eq:index}
   \bigl[ \vG : \vG^{}_{\! F} \bigr]
    \, = \ts \prod_{n\in F} \bigl[ \vG:\vG_n \bigr] \ts .
\end{equation}
In particular, the lattices $\vG_{n}$ are mutually commensurate, and
any finite subset of them still has a common sublattice of finite
index in $\vG$.  Observing the relation $\bigl(\bigcap_{n\in F}
\vG^{}_{n}\bigr)^{*} = \bigplus_{n\in F} \vG^{*}_{n}$ for the dual
lattices in this situation, where $\vG^{*}_{n} \cap \vG^{*}_{m} =
\vG^{*}$ for $m\neq n$ as a result of the coprimality condition
\eqref{eq:simple-coprime}, Eq.~\eqref{eq:index} is equivalent to
\[
  \bigl[ \vG^{*}_{\! F} : \vG^{*} \bigr] \, = \,
  \big[\bigl({\textstyle \bigplus_{n\in F}}\, \vG^{*}_{n}
  \bigr) : \vG^{*} \big]
   \,=\ts \prod_{n\in F} \bigl[\vG^{*}_{n} : \vG^{*} \bigr] \ts .
\]

What is more, $V$ gives rise to a CPS $(G,H,\cL)$ as in
Eq.~\eqref{eq:candp}, with $G=\vG$ and the compact group
$H:=\prod_{n\in \NN} \vG\!  /\nts \vG_n$, where $\vG\!  /\nts \vG_n$
is a quotient group of order $[\vG:\vG_n] = \lvert \det (\vG_n) \rvert
/ \lvert \det ( \vG)\rvert$.  The lattice is given by
$\cL=\{(x,\iota(x))\mid x\in\vG\}$, where the $\star$-map $\iota$ is
again the natural (diagonal) embedding of $\vG$ in $H$ ($x\mapsto
(x+\vG_n)_{n\in\NN}$).  Indeed, we can write $V$ as a cut and project
set, $V = \oplam (W)$, with window
\begin{equation}\label{eq:lat-win}
   W \, = \prod_{n\in \NN} \bigl( (\vG\!  
   /\nts \vG_n) \setminus\{0+\vG_n\} \bigr) .
\end{equation}
With standard arguments, and using the convergence condition
\eqref{eq:abs-conv}, it is easy to verify the following result.

\begin{fact}\label{fact:win-vol}
  The window\/ $W$ of Eq.~\eqref{eq:lat-win} is a compact subset of\/
  $H$ with empty interior, and has positive measure
\[
    \theta^{}_{\! H} (W) \, = \prod_{n\in \NN}
   \Bigl(1-\frac{1}{[\vG:\vG_n]}\Bigr) 
\]
  with respect to the normalised Haar measure\/ $\theta^{}_{\! H}$ 
  on\/ $H$.   \qed
\end{fact}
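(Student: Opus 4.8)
The plan is to verify the three assertions --- compactness, empty interior, and the measure formula together with its positivity --- separately, treating $H=\prod_{n\in\NN}\vG/\vG_{n}$ as an infinite product of finite discrete groups carrying the product topology and the product Haar measure. Writing $A_{n} := (\vG/\vG_{n})\setminus\{0+\vG_{n}\}$, so that $W=\prod_{n\in\NN}A_{n}$, the compactness is immediate: each $A_{n}$ is a finite, hence compact, subset of $\vG/\vG_{n}$, so Tychonoff's theorem shows that $W$ is compact. Equivalently, each $A_{n}$ is clopen in the discrete factor, so $W$ is closed in the compact group $H$.

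For the empty interior, I would argue via basic open sets. Any nonempty open set in the product topology contains a basic open set of the form $\prod_{n\in\NN}U_{n}$ with $U_{n}\subseteq\vG/\vG_{n}$ and $U_{n}=\vG/\vG_{n}$ for all $n$ outside some finite set $F$. If such a set were contained in $W$, then projecting to the $n$-th coordinate would force $U_{n}\subseteq A_{n}$ for every $n$; but for $n\notin F$ this reads $\vG/\vG_{n}\subseteq A_{n}$, which is impossible, since the neutral coset $0+\vG_{n}$ lies in $\vG/\vG_{n}$ but not in $A_{n}$. Hence no nonempty open set fits inside $W$, so $W^{\circ}=\varnothing$.

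For the measure, recall that the normalised Haar measure $\theta^{}_{\! H}$ on $H$ is the product of the normalised counting measures $\theta_{n}$ on the finite factors, where $\theta_{n}(\{g\})=1/[\vG:\vG_{n}]$, so that $\theta_{n}(A_{n})=1-1/[\vG:\vG_{n}]$. The key step is to pass from finite rectangles to the full infinite product. Fixing an exhausting sequence $F_{1}\subseteq F_{2}\subseteq\cdots$ of finite subsets of $\NN$ with $\bigcup_{k}F_{k}=\NN$, the cylinder sets $W_{k}:=\prod_{n\in F_{k}}A_{n}\times\prod_{n\notin F_{k}}\vG/\vG_{n}$ are measurable, decrease to $W=\bigcap_{k}W_{k}$, and satisfy $\theta^{}_{\! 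H}(W_{k})=\prod_{n\in F_{k}}(1-1/[\vG:\vG_{n}])$ by the finite product rule. Continuity of the probability measure $\theta^{}_{\! H}$ from above then yields the asserted formula $\theta^{}_{\! H}(W)=\prod_{n\in\NN}(1-1/[\vG:\vG_{n}])$.

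Finally, positivity is precisely where the convergence hypothesis \eqref{eq:abs-conv} enters, and this is the one point deserving genuine care. Since the $\vG_{n}$ are proper, one has $[\vG:\vG_{n}]\geqslant 2$ and hence $0<1/[\vG:\vG_{n}]\leqslant\tfrac12$; by the standard criterion for infinite products of factors in $(0,1)$, the product $\prod_{n}(1-1/[\vG:\vG_{n}])$ converges to a strictly positive limit exactly because $\sum_{n}1/[\vG:\vG_{n}]<\infty$. This gives $\theta^{}_{\! H}(W)>0$ and completes the argument. The only genuinely non-formal ingredient is this interplay between the summability condition and the positivity of the infinite product; everything else is a routine unwinding of the product topology and the product measure.
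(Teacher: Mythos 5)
Your proof is correct and is precisely the ``standard argument'' the paper alludes to: the paper states this Fact with a \qed{} and no written proof, leaving exactly the routine verification you carried out (Tychonoff/closedness for compactness, the finite-support nature of basic open sets for empty interior, continuity from above of the product Haar measure for the formula, and the summability condition \eqref{eq:abs-conv} with $[\vG:\vG_{n}]\geqslant 2$ for positivity). Nothing is missing.
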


The following result characterises the cases where $V$ is of maximal
density. Throughout, we let
 $\cA = (A_{m})^{}_{m\in\NN}$ be a van Hove sequence of centred
   balls, with\/ $A_{m} = B_{m} (0)$, say.

\begin{prop}
  The weak model set\/ $V= \oplam (W)$ is of maximal natural density
  for the CPS\/ $(\vG,H,\cL)$ constructed above if and only if
 \begin{equation}\label{eq:maxdens}
      \lim_{N\to\infty} \overline{\dens} \left(\left(
     \textstyle{\bigcup_{n > N}}\,  \vG_n \right) \setminus 
     \textstyle{\bigcup_{n \leqslant N}}\,  \vG_n \right)
     \, = \, 0 \ts .
  \end{equation}
\end{prop}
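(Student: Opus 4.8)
The plan is to approximate $V$ from outside by its \emph{periodic truncations} and to exploit that these are honest regular model sets. For $N\in\NN$, I would put $V_{N} := \vG\setminus\bigcup_{n\leqslant N}\vG_{n}$, so that $V=\bigcap_{N}V_{N}$ and $V\subseteq V_{N}$. One has $V_{N}=\oplam(W_{N})$ for the window $W_{N}=\prod_{n\leqslant N}\bigl((\vG/\vG_{n})\setminus\{0+\vG_{n}\}\bigr)\times\prod_{n>N}\vG/\vG_{n}$, which depends on only finitely many coordinates and is therefore clopen in $H$. Consequently $V_{N}$ is a regular model set, and Fact~\ref{fact:dens-bound} shows that its density exists relative to $\cA$ and equals
\[
   \dens(V_{N}) \, = \, \dens(\cL)\,\theta^{}_{\! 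H}(W_{N})
   \, = \, \dens(\cL)\prod_{n\leqslant N}
   \Bigl(1-\tfrac{1}{[\vG:\vG_{n}]}\Bigr) \ts .
\]
Since $W_{N}\supseteq W_{N+1}$ with $\bigcap_{N}W_{N}=W$, downward continuity of $\theta^{}_{\! H}$ gives $\theta^{}_{\! H}(W_{N})\to\theta^{}_{\! H}(W)$, hence $\dens(V_{N})\to D:=\dens(\cL)\,\theta^{}_{\! H}(W)$, the maximal density value.

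Next I would identify the truncation error with the set appearing in \eqref{eq:maxdens}. A direct set-theoretic check gives the disjoint decomposition $V_{N}=V\sqcup(V_{N}\setminus V)$ with
\[
    V_{N}\setminus V \, = \,
    \Bigl(\textstyle\bigcup_{n>N}\vG_{n}\Bigr)\setminus
    \textstyle\bigcup_{n\leqslant N}\vG_{n} \ts ,
\]
because a point of $V_{N}$ lies outside $V$ exactly when it belongs to some $\vG_{n}$, necessarily with $n>N$. Counting points in $A_{m}$ and dividing by $\vol(A_{m})$ then yields
\[
   \frac{\card(V\cap A_{m})}{\vol(A_{m})} \, = \,
   \frac{\card(V_{N}\cap A_{m})}{\vol(A_{m})} -
   \frac{\card\bigl((V_{N}\setminus V)\cap A_{m}\bigr)}{\vol(A_{m})} \ts .
\]
Here the first term on the right \emph{converges} to $\dens(V_{N})$, by regularity of $V_{N}$. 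This is the crucial point: because that term has a genuine limit, taking $\liminf_{m\to\infty}$ of the identity collapses the right-hand side into a limit minus a $\limsup$, producing the exact relation
\[
   \underline{\dens}(V) \, = \, \dens(V_{N}) -
   \overline{\dens}(V_{N}\setminus V) \ts ,
\]
valid for every $N$.

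To finish, I would rearrange this as $\overline{\dens}(V_{N}\setminus V)=\dens(V_{N})-\underline{\dens}(V)$ and let $N\to\infty$; since $\dens(V_{N})\to D$, this gives $\lim_{N\to\infty}\overline{\dens}(V_{N}\setminus V)=D-\underline{\dens}(V)$. Therefore condition \eqref{eq:maxdens} holds precisely when $\underline{\dens}(V)=D$. On the other hand, Fact~\ref{fact:win-vol} tells us that $W$ is compact with empty interior, so $\overline{W}=W$ and the a priori sandwich of Fact~\ref{fact:dens-bound} reads $0\leqslant\underline{\dens}(V)\leqslant\overline{\dens}(V)\leqslant\dens(\cL)\,\theta^{}_{\! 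H}(W)=D$. Hence $\underline{\dens}(V)=D$ forces $\overline{\dens}(V)=D$ as well, i.e.\ $\dens(V)$ exists and attains its maximal value $D$; conversely, maximality of $\dens(V)$ gives $\underline{\dens}(V)=D$. Combining the two equivalences proves that $V$ is of maximal density if and only if \eqref{eq:maxdens} holds.

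I expect the main obstacle to be the justified passage to the exact identity $\underline{\dens}(V)=\dens(V_{N})-\overline{\dens}(V_{N}\setminus V)$: this hinges on the \emph{existence} (not mere boundedness) of $\dens(V_{N})$ relative to the specific ball sequence $\cA$, which is exactly what permits the $\liminf$ of the difference to be evaluated termwise. The remaining ingredients — clopenness of $W_{N}$, the identification of $V_{N}\setminus V$, and $\dens(V_{N})\to D$ via continuity of $\theta^{}_{\! H}$ — are routine.
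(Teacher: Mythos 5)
Your proof is correct and follows essentially the same route as the paper: both decompose $V$ as the crystallographic truncation $V_N$ minus the tail set $R_N=\bigl(\bigcup_{n>N}\vG_n\bigr)\setminus\bigcup_{n\leqslant N}\vG_n$, use the existence of $\dens(V_N)$ (you via the clopen window and Fact~\ref{fact:dens-bound}, the paper via periodicity and inclusion--exclusion) together with its convergence to $\dens(\cL)\,\theta^{}_{\! H}(W)$, and then invoke Fact~\ref{fact:dens-bound} to pass from $\underline{\dens}(V)$ to the existence of the maximal density. Your exact identity $\underline{\dens}(V)=\dens(V_N)-\overline{\dens}(R_N)$ is a slightly sharper packaging of the paper's two-sided sandwich estimate, but the underlying argument is the same.
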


\begin{proof}
  For all $N\ge 1$, one has $\left( \textstyle{\bigcup_{n > N}}\,
    \vG_n \right) \setminus \textstyle{\bigcup_{n \leqslant N}}\,
  \vG_n=\left( \textstyle{\bigcup_{n > N}}\, \vG_n \right) \cap
  \left(\vG \setminus \textstyle{\bigcup_{n \leqslant N}}\,
    \vG_n\right) $ and $V=\vG\setminus\textstyle{\bigcup_{n\in \NN}}
  \, \vG_n=V^{}_{\! N}\setminus R^{}_{\! N}$, where
\[
    V^{}_{\! N}\, = \vG \setminus \textstyle{\bigcup_{n
        \leqslant  N}}\,  \vG_n \quad \text{and} \quad R^{}_{\! N}\, =
    \left(\textstyle{\bigcup_{n > N}}\,  \vG_n\right) \cap V^{}_{\!
      N}\ts .
\]
For fixed $N\in\NN$, it is now clear that
\[
   \underline\dens(V^{}_{\! N})-\overline\dens(R^{}_{\! N}) \, \leqslant
   \, \underline\dens(V) \, \leqslant \, \overline\dens(V) \, \leqslant
   \, \overline\dens(V^{}_{\! N})-\underline\dens(R^{}_{\! N})\ts .
\]
The set $V^{}_{\! N}$ is crystallographic with lattice of periods
$\bigcap_{n\leqslant N} \vG_{n}$. Consequently, the natural density of
$V^{}_{\! N}$ exists, so $\underline\dens(V^{}_{\!
  N})=\overline\dens(V^{}_{\! N})=\dens(V^{}_{\! N})$.  By the
inclusion-exclusion formula for sublattice densities, $\dens(V^{}_{\!
  N})$ can be computed as
\begin{equation}\label{eq:conv}
\dens(V^{}_{\! N}) \, = \,
   \frac{1}{\lvert\det (\vG)\rvert }\prod_{n\le N}
   \bigl(1-\tfrac{1}{[\vG:\vG_n]}\bigr)
   \, \searrow \, \frac{1}{\lvert\det (\vG) \rvert}\prod_{n\in \NN}
   \bigl(1-\tfrac{1}{[\vG:\vG_n]}\bigr)\ts .
\end{equation}
Thus, by the convergence from Eq.~\eqref{eq:conv}, the density of $V$
exists and is equal to
\begin{equation}\label{density}
  \dens (\cL) \,  \theta^{}_{\! H} (W) \,=\,
  \frac{1}{\lvert \det ( \vG) \rvert }\prod_{n\in \NN}
   \Big(1-\frac{1}{[\vG:\vG_n]}\Big)
\end{equation}
if and only if $\lim_{N\to\infty} \overline\dens(R^{}_{\! N})=0$. In
fact, since $V^{}_{\! N}=V \ts \dot{\cup}\ts R^{}_{\! N}$, the density
of $V$ exists if and only if the density of $R^{}_{\! N}$ exists for
all $N\in\NN$.
\end{proof}

In particular, $V$ is of maximal density if the lattice family has
\emph{light tails} in the sense of~\cite{BKKL}, which means that
\[
\lim_{N\to\infty} \overline{\dens} \left(
     \textstyle{\bigcup_{n > N}}\,  \vG_n \right) 
     \, = \, 0 \ts .
\]
This is somewhat reminiscent of the situation of regular versus
irregular Toeplitz sequences when described as model sets;
see~\cite{BJL}. Note that the complement set $\vG\setminus V$ is
another weak model set for the same CPS, which has minimal density (in
our above terminology) if and only if $\ts V$ has maximal density.

Via our general spectral results from Sections~\ref{sec:diffraction}
and \ref{sec:hull} on weak model sets of maximal density, we now get
the following consequence.

\begin{coro}
  Given the maximal density property~\eqref{eq:maxdens}, the point
  set\/ $V \subset \vG$ from Eq.~\eqref{eq:def-coprime} is pure point
  diffractive, and its hull has pure point dynamical spectrum with
  respect to the natural cluster frequency measure.
  
  The dynamical spectrum is\/ $\varSigma = \bigplus_{n\in\NN}
  \vG^{*}_{n}$, and the diffraction measure of\/ $\delta^{}_{V}$ reads
\[
     \widehat{\gamma^{}_{V}} \, = \sum_{k\in\varSigma}
     \lvert a (k) \rvert^{2} \, \delta^{}_{k} \ts ,
     \quad \text{with } \; a (k) \, = \, \dens (V) 
     \prod_{n\in F^{}_{k}} \frac{1}{1 - [\vG : \vG_{n}]}  \ts ,
\]  
  where\/ $F^{}_{k} := \bigcap \{ F \subset \NN \text{ finite} \mid
  k \in \bigplus_{n\in F} \vG^{*}_{n} \}$, for each\/
  $k \in \varSigma$, is a unique finite subset of\/ $\NN$.
\end{coro}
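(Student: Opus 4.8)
The plan is to feed the maximal-density hypothesis into the general machinery of Sections~\ref{sec:diffraction} and~\ref{sec:hull}, and then to make the dual data of this particular arithmetic CPS completely explicit. The qualitative half needs no new ideas: the preceding proposition shows that \eqref{eq:maxdens} is equivalent to $V=\oplam(W)$ being of maximal density relative to $\cA$, with $\dens(V)=\dens(\cL)\,\theta^{}_{\!H}(W)$ as in~\eqref{density}. Theorem~\ref{thm:weak-pp} then gives that $\delta^{}_{V}$ is pure point diffractive with $\widehat{\gamma^{}_{V}}=\sum_{u\in L^{0}}|a(u)|^{2}\,\delta^{}_{u}$ and $a(u)=\tfrac{\dens(V)}{\theta^{}_{\!H}(W)}\,\widehat{1^{}_{W}}(-u^{\star})$, while the maximal-density corollary of Section~\ref{sec:hull} supplies pure point dynamical spectrum for the cluster frequency measure. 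So the real task is to identify $L^{0}$ with $\varSigma$ and to evaluate $\widehat{1^{}_{W}}$ on the relevant characters.

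For the spectrum I would dualise the lattice data. As $\vG\simeq\ZZ^{d}$, one has $\widehat{\vG}\simeq\RR^{d}/\vG^{*}$, and since $H=\prod_{n}\vG/\vG_{n}$ is a product of finite groups, $\widehat{H}\simeq\bigoplus_{n}\widehat{\vG/\vG_{n}}\simeq\bigoplus_{n}\vG^{*}_{n}/\vG^{*}$. Representing $u\in\widehat{\vG}$ by $k\in\RR^{d}$ modulo $\vG^{*}$ and $v\in\widehat{H}$ by $(\ell_{n})^{}_{n}$ with $\ell_{n}\in\vG^{*}_{n}$ of finite support, the annihilator condition $\langle u,x\rangle\langle v,\iota(x)\rangle=1$ for all $x\in\vG$ that defines $\cL^{\ts0}$ reduces to $k+\sum_{n}\ell_{n}\in\vG^{*}$. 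Projecting to $\widehat{\vG}$ yields $L^{0}=\pi(\cL^{\ts0})=\bigplus_{n}\vG^{*}_{n}\pmod{\vG^{*}}=\varSigma$. Coprimality~\eqref{eq:simple-coprime} together with the gcd-law~\eqref{eq:lat-sections}, dualised through $(\bigcap_{n\in F}\vG_{n})^{*}=\bigplus_{n\in F}\vG^{*}_{n}$, shows that this sum is direct modulo $\vG^{*}$ (Chinese remainder theorem); hence the $\star$-map is the inverse of $(\ell_{n})^{}_{n}\mapsto-\sum_{n}\ell_{n}$, so $u^{\star}$ is precisely the unique direct-sum decomposition of $-k$.

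The same dualised gcd-law reads $\bigl(\bigplus_{n\in F}\vG^{*}_{n}\bigr)\cap\bigl(\bigplus_{n\in F'}\vG^{*}_{n}\bigr)=\bigplus_{n\in F\cap F'}\vG^{*}_{n}$, so the collection of finite $F$ with $k\in\bigplus_{n\in F}\vG^{*}_{n}$ is closed under intersection; this makes $F^{}_{k}$ a well-defined finite set, equal to the support of the decomposition of $k$ and hence to $\supp(u^{\star})$. To finish I would evaluate $\widehat{1^{}_{W}}$ directly. Since $W=\prod_{n}\bigl((\vG/\vG_{n})\setminus\{0\}\bigr)$ factors and $\theta^{}_{\!H}$ is the product of the normalised counting measures, $\widehat{1^{}_{W}}(v)$ factors over $n$, and orthogonality of characters on each finite group $\vG/\vG_{n}$ contributes the factor $1-\tfrac{1}{[\vG:\vG_{n}]}$ when $\ell_{n}=0$ and $-\tfrac{1}{[\vG:\vG_{n}]}$ when $\ell_{n}\neq0$. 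Collecting these gives $\widehat{1^{}_{W}}(-u^{\star})=\theta^{}_{\!H}(W)\prod_{n\in F^{}_{k}}\tfrac{1}{1-[\vG:\vG_{n}]}$, and substitution into $a(u)$ produces exactly $a(u)=\dens(V)\prod_{n\in F^{}_{k}}\tfrac{1}{1-[\vG:\vG_{n}]}$, as the factor $\theta^{}_{\!H}(W)$ cancels. Finally, each factor is nonzero (as $[\vG:\vG_{n}]\geqslant2$) and $\dens(V)>0$ by the convergence condition~\eqref{eq:abs-conv}, so every amplitude is nonzero; the diffraction is therefore supported on all of $L^{0}$, and the dynamical spectrum, being the group generated by the Bragg positions, equals $L^{0}=\varSigma$.

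I expect the only genuinely delicate part to be the bookkeeping for the dual CPS: verifying the directness of the internal sum, pinning down the $\star$-map, and identifying $F^{}_{k}$ with $\supp(u^{\star})$ through the dualised gcd-law. By contrast, the character computation on the finite factors is entirely routine.
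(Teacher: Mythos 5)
Your proposal is correct, and for the quantitative part it takes a genuinely different route from the paper. Both arguments obtain the qualitative claims in the same way (the preceding proposition converts \eqref{eq:maxdens} into maximal density, and then Theorem~\ref{thm:weak-pp} together with the maximal-density corollary of Section~\ref{sec:hull} gives pure point diffraction and pure point dynamical spectrum), and both identify $F_k$ via the dualised gcd-law $\vG^{*}_{\! F}\cap\vG^{*}_{\! F'}=\vG^{*}_{\! F\cap F'}$. The difference lies in where the computation is carried out. The paper stays on the direct-space side: it reads off $\varSigma$ as the limit of the spectra $\bigplus_{n\leqslant N}\vG^{*}_{n}$ of the crystallographic approximants $V_N$, and obtains $a(k)$ by inclusion--exclusion, writing $\delta_V=\sum_{F}(-1)^{\card(F)}\delta_{\vG_F}$, applying the Poisson summation formula to each lattice comb, and resumming the contributions at $k$ over all $F\supseteq F_k$ into the stated product (the term-by-term Fourier transform being justified by norm convergence of the truncations). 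You instead work on the internal-space side: you compute $L^{0}$ and the dual $\star$-map explicitly from the annihilator condition, and you evaluate $\widehat{1^{}_{W}}(-u^{\star})$ as a product of character sums on the finite factors $\vG/\vG_n$, which you then feed into the amplitude formula of Theorem~\ref{thm:weak-pp}. Your route leans more heavily on the general theorem and avoids the distributional manipulation of the infinite inclusion--exclusion sum, at the price of the dual-CPS bookkeeping that you rightly flag as the delicate point; the two computations are of course related by expanding the product, and your local factors ($1-\tfrac{1}{[\vG:\vG_n]}$ for trivial, $-\tfrac{1}{[\vG:\vG_n]}$ for nontrivial components) are correct. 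The one step you should make explicit is the factorisation $\widehat{1^{}_{W}}=\prod_n\widehat{1^{}_{W_n}}$ over the infinite product group: it requires approximating $1^{}_{W}$ in $L^{1}(H)$ by the truncated windows $\prod_{n\leqslant N}W_n\times\prod_{n>N}\vG/\vG_n$, which is exactly where the convergence condition~\eqref{eq:abs-conv} enters and is the internal-space analogue of the paper's truncation argument. Your closing observation that all amplitudes are nonzero, so that the group generated by the Bragg positions is all of $\varSigma$, is a valid way to pin down the dynamical spectrum and is slightly more self-contained than the paper's appeal to the crystallographic limit.
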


\begin{proof}
  The first claim on the pure point nature of the two types of spectra
  is clear from our above derivation. The calculation of the dynamical
  spectrum is an elementary consequence of $\bigplus_{n\leqslant N}
  \vG^{*}_{n}$ being the spectrum of the crystallographic set
  $V^{}_{\! N}$ and taking the limit.  Also the diffraction measure
  can be calculated this way. Clearly, any point $k\in\varSigma$ is
  contained in a set of the form $\vG^{*}_{\! F} = \bigplus_{n\in F}
  \vG^{*}_{n}$ for some finite $F\subset \NN$.  Dualising
  Eq.~\eqref{eq:lat-sections} gives the relation $\vG^{*}_{\! F} \cap
  \vG^{*}_{\! F'} = \vG^{*}_{\! F\cap F'}$, which implies the claim on
  $F^{}_{k}$.
  
  Since we now know that the diffraction measure is pure point, we
  know its general form and only need to calculate the amplitude
  $a(k)$ for a given $k\in\varSigma$. This can be done by a simple
  inclusion-exclusion argument as follows, which is justified by the
  norm convergence of the sequence of approximating crystallographic
  systems obtained by suitable truncation. Observe that we have
  $\delta^{}_{V} = \sum_{F \subset \NN} \, (-1)^{\card (F)}
  \delta^{}_{\vG^{}_{\! F}}$.  This gives, in the sense of tempered
  distributions, $\widehat{\delta^{}_{V}} = \sum_{F\subset \NN}\,
  (-1)^{\card (F)} \dens (\vG^{}_{\! F}) \, \delta^{}_{\vG^{*}_{\!
      F}}$ by an application of Poisson's summation formula; compare
  \cite[Thm.~9.1]{TAO}.  From the structure of the lattices
  $\vG^{*}_{\! F}$, one now obtains the amplitude as
\[
\begin{split}
   a(k) \, & =  \sum_{F \supseteq F^{}_{k} }
   \frac{(-1)^{\card (F)} \dens (\vG)}
   {\prod_{m\in F} \, [ \vG : \vG_{m}]} \, = \,
   \frac{(-1)^{\card (F^{}_{k})} \dens (\vG)} 
   {\prod_{n \in F^{}_{k} } \, [\vG : \vG_{n}] }
   \sum_{F \subseteq \NN \setminus F^{}_{k}}
   \frac{(-1)^{\card (F)}} {\prod_{m \in F}\, [\vG : \vG_{m}]} \\[2mm]
   & = \, \frac{(-1)^{\card (F^{}_{k})} \dens (\vG)} 
   {\prod_{n \in F^{}_{k} } \, [\vG : \vG_{n}] }
   \, \prod_{m \in \NN \setminus F^{}_{k}}
   \Bigl( 1 - \frac{1}{[\vG : \vG_{m}]} \Bigr) \\[2mm]
   & = \, \biggl(\dens (\vG) \prod_{m\in \NN}
   \Bigl( 1 - \frac{1}{[\vG : \vG_{m}]} \Bigr)\biggr)
   \prod_{n\in F^{}_{k}} \frac{-1}{[\vG : \vG_{n}]}
   \Bigl( 1 - \frac{1}{[\vG : \vG_{n}]} \Bigr)^{-1} \\[2mm]
   & = \, \dens (V) \prod_{n\in F^{}_{k}}
   \frac{1}{1 - [\vG : \vG_{n}]} \ts ,
\end{split}
\]
where the sums are over finite subsets of $\NN$ and our previous
formula for the density of $V$ was used in the last step.
\end{proof}

The various examples of $\cB$-free systems and their generalisations,
which are all covered, can be seen as coprime lattice families with an
arithmetic structure. Also, they all fall into the class of weak model
sets of maximal density, and are thus special cases of the general
theory of weak model sets. In particular, their spectral properties
get a nice and general explanation in this way.  \bigskip

\section*{Acknowledgements}

It is a pleasure to thank Gerhard Keller and Christoph Richard for
helpful discussions, and for sharing their approach from reference
\cite{KR} with us prior to publication. NS was supported by NSERC,
under grant 2014-03762.  This work was also supported by the German
Research Foundation (DFG), within the CRC 701.

\bigskip


\begin{thebibliography}{99}
\small


\bibitem{TAO}
M.~Baake and U.~Grimm,
\textit{Aperiodic Order. Vol.\ $1$: A Mathematical Invitation},
Cambridge Univ.\ Press, Cambridge (2013).

\bibitem{BHP}
M.~Baake, J.~Hermisson and P.A.B.~Pleasants, 
The torus parametrization of quasiperiodic LI classes, 
\textit{J.\ Phys.~A:\ Math.\ Gen.} \textbf{30} (1997) 3029--3056;
\texttt{mp\_arc/02-168}.

\bibitem{BH}
M.~Baake and C.~Huck,
Ergodic properties of visible lattice points,
\textit{Proc.\ V.A.~Steklov Inst.\ Math.} 
\textbf{288} (2015) 184--208; \texttt{arXiv:1501.01198}.

\bibitem{BJL}
M.~Baake, T.~J\"ager and D.~Lenz,
Toeplitz flows and model sets,
\textit{preprint} \texttt{arXiv:1511.08595}.

\bibitem{BL}
M.~Baake and D.~Lenz,
Dynamical systems on translation bounded measures:\
Pure point dynamical and diffraction spectra,
\textit{Ergodic Th.\ \& Dynam.\ Syst.} \textbf{24} (2004)
1867--1893; \newline
\texttt{arXiv:math.DS/0302231}.

\bibitem{BLM}
M.~Baake, D.~Lenz and R.V.~Moody,
Characterization of model sets by dynamical systems,
\textit{Ergodic Th.\ \& Dynam.\ Syst.} \textbf{27}
(2007) 341--382; \texttt{arXiv:math.DS/0511648}.

\bibitem{BLO}
M.~Baake, T.~J\"{a}ger and D.~Lenz,
Toeplitz flows and model sets, 
\textit{preprint} \texttt{arXiv:1511.08595}.

\bibitem{BLvE}
M.~Baake, D.~Lenz and A.C.D.~van Enter,
Dynamical versus diffraction spectrum for structures 
with finite local complexity,
\textit{Ergodic Th.\ \& Dynam.\ Syst.} \textbf{35} 
(2015) 2017--2043; \texttt{arXiv:1307.7518}.

\bibitem{BM}
M.~Baake and R.V.~Moody,
Weighted Dirac combs with pure point diffraction,
\textit{J.\ reine angew.\ Math.\ (Crelle)} \textbf{573}
(2004) 61--94; \texttt{arXiv:math.MG/0203030}.

\bibitem{BMP}
M.~Baake, R.V.~Moody and P.A.B.~Pleasants,
Diffraction from visible lattice points and $k$-th power 
free integers, \textit{Discr.\ Math.} \textbf{221} (2000) 3--42;
\texttt{arXiv:math.MG/9906132}.

\bibitem{BK} 
A.~Bartnicka and J.~Ku\l aga-Przymus,
$\cB$-free integers in number fields and dynamics, 
\textit{preprint} \texttt{arXiv:1507.00855}.

\bibitem {BKKL} 
A.~Bartnicka, S.~Kasjan, J.~Ku\l aga-Przymus and M.~Lema\'nczyk,
$\cB$-free sets and dynamics,
\textit{preprint} \texttt{arXiv:1509.08010}

\bibitem{BF}
C.~Berg and G.~Forst,
\textit{Potential Theory on Locally Compact Abelian Groups},
Springer, Berlin (1975).

\bibitem{CV}
F.~Cellarosi and I.~Vinogradov,
Ergodic properties of $k$-free integers in number fields,
\textit{J.\ Mod.\ Dyn.} \textbf{7} (2013) 461--488; 
\texttt{arXiv:1304.0214}.

\bibitem{DFM}
F.~Durand, A.~Frank and A.~Maas,
Eigenvalues of Toeplitz minimal systems of finite
topological rank,
\textit{Ergodic Th.\ \& Dynam.\ Syst.} \textbf{35}
(2015) 2499--2528; \texttt{arXiv:1507.06879}.

\bibitem{EW}
M.~Einsiedler and T.~Ward,
\textit{Ergodic Theory with a View towards Number Theory},
GTM 259, Springer, London (2011).

\bibitem{ELD} 
E.~H.~El Abdalaoui, M.~Lema\'nczyk and T.~de la Rue,
A dynamical point of view on the set of $\cB$-free integers,
to appear in \textit{Int.\ Math.\ Res.\ Notices}; 
\texttt{arXiv:1311.3752}.

\bibitem{GLA}
J.~Gil de Lamadrid and L.N.~Argabright,
Almost periodic measures,
\textit{Memoirs Amer.\ Math.\ Soc.} \textbf{85} (1990) no.~428 
(AMS, Providence, RI).

\bibitem{HRB}
J.~Hermisson, C.~Richard and M.~Baake, 
A guide to the symmetry structure of quasiperiodic tiling classes, 
\textit{J.\ Phys.\ I (France)} \textbf{7} (1997) 1003--1018;
\texttt{mp\_arc/02-180}.

\bibitem{HR-Book}
E.~Hewitt and K.A.~Ross,
\textit{Abstract Harmonic Analysis}, 2nd ed.,
Springer, Berlin (1979).

\bibitem{Hof}
A.~Hof,
On diffraction by aperiodic structures,
\textit{Commun.\ Math.\ Phys.} \textbf{169} (1995) 25--43.

\bibitem{HR}
C.~Huck and C.~Richard,
On pattern entropy of weak model sets,
\textit{Discr.\ Comput.\ Geom.} \textbf{54} (2015) 
741--757; \texttt{arXiv:1423.6307}.

\bibitem{KR}
G.~Keller and C.~Richard,
Dynamics on the graph of the torus parametrisation,
\textit{preprint} \texttt{arXiv:1511.06137}.

\bibitem{KLW}
J.~Ku\l aga-Przymus, M.~Lema\'nczyk and B.~Weiss,
On invariant measures for $\cB$-free systems,
\textit{preprint} \texttt{arXiv:1406.3745}.

\bibitem{LMS}
J.-Y.~Lee, R.V.~Moody and B.~Solomyak,
Pure point dynamical and diffraction spectra,
\textit{Ann.\ Henri Poincar\'e} \textbf{3} (2002) 1003--1018;
\texttt{arXiv:0910.4809}.

\bibitem{LR}
D.~Lenz and C.~Richard,
Pure point diffraction and cut and project schemes for
measures:\ The smooth case,
\textit{Math.\ Z.} \textbf{256} (2007) 347--378;
\texttt{arXiv:math.DS/0603453}.

\bibitem{LS}
D.~Lenz and N.~Strungaru,
Pure point spectrum for measure dynamical systems on locally
compact Abelian groups,
\textit{J.\ Math.\ Pures Appl.} \textbf{92} (2009) 323--341;
\texttt{arXiv:0704.2498}.

\bibitem{LS2}
D.~Lenz and N.~Strungaru,
On weakly almost periodic measures, in preparation.

\bibitem{Meyer}
Y.~Meyer,
\textit{Algebraic Numbers and Harmonic Analysis},
North Holland, Amsterdam (1972).

\bibitem{M-Nato}
R.V.~Moody,
Meyer sets and their duals,  
in:\ R.V.~Moody (ed.),
 \textit{The Mathematics of Long-Range Aperiodic Order},
NATO ASI Series C 489, Kluwer, Dordrecht (1997), pp.~403--441.

\bibitem{M-beyond}
R.V.~Moody,
Model sets: a survey, 
in:\ F.~Axel, F.~D\'{e}noyer and J.P.~Gazeau (eds.)
\textit{From Quasicrystals to More Complex Systems},
Springer, Berlin and EDP Sciences, Les Ulis (2000),
pp.~145--166; \texttt{arXiv:math.MG/0002020}.

\bibitem{Moody}
R.V.~Moody,
Uniform distribution in model sets,
\textit{Can.\ Math.\ Bulletin} \textbf{45} (2002) 123--130.

\bibitem{PH}
P.A.B.~Pleasants and C.~Huck,
Entropy and diffraction of the $k$-free points in $n$-dimensional
lattices, \textit{Discr.\ Comput.\ Geom.} \textbf{50} (2013)
39--68; \texttt{arXiv:1112.1629}.

\bibitem{RS}
C.~Richard and N.~Strungaru,
Pure point diffraction and Poisson summation,
\textit{preprint} \newline
\texttt{arXiv:1512.00912}.

\bibitem{Rudin}
W.~Rudin,
\textit{Fourier Analysis on Groups},
Wiley, New York (1962).

\bibitem{Martin}
M.~Schlottmann,
Generalised model sets and dynamical systems,
in:\ M.\ Baake and R.V.\ Moody (eds.),
\textit{Directions in Mathematical Quasicrystals},
CRM Monograph Series vol.\ 13, AMS, Providence, RI (2000),
pp.\ 143--159.

\bibitem{Schreiber}
J.-P.~Schreiber,
Approximations diophantiennes et probl\`{e}mes additifs
dans les groupes ab\'{e}liens localement compacts,
\textit{Bull.\ Soc.\ Math.\ France} \textbf{101} (1973) 297--332.

\bibitem{Nicu-Band2}
N.~Strungaru,
Almost periodic measures and Meyer sets, to appear in:\
M.~Baake and U.~Grimm (eds.),
\textit{Aperiodic Order. Vol.\ $2$: Crystallography and
Almost Periodicity},
Cambridge Univ.\ Press, in preparation.
\texttt{arXiv:1501.00945}.

\end{thebibliography}
\end{document}